\numberwithin{equation}{section}
\newtheorem{teo}{Theorem}[section]
\newtheorem{defi}[teo]{Definition}
\newtheorem{prop}[teo]{Proposition}
\newtheorem{lemma}[teo]{Lemma}
\theoremstyle{definition}
\newtheorem{remark}[teo]{Remark}
\theoremstyle{remark}
\renewcommand{\r}{\mathbb{R}}
\newcommand{\n}{\mathbb{N}}
\newcommand{\restr}[2]{\left.\kern-\nulldelimiterspace#1\vphantom{\big|}\right|_{#2}}
\DeclareMathOperator{\AC}{AC}
\DeclareMathOperator{\BV}{BV}
\DeclareMathOperator{\inter}{int}
\DeclareMathOperator*{\esssup}{ess\, sup}
\Crefname{lemma}{lemma}{lemmas}
\Crefname{lemma}{Lemma}{Lemmas}
\Crefname{prop}{proposition}{propositions}
\Crefname{prop}{Proposition}{Propositions}
\Crefname{defi}{definition}{definitions}
\Crefname{defi}{Definition}{Definitions}
\Crefname{cor}{corollary}{corollaries}
\Crefname{cor}{Corollary}{Corollaries}
\Crefname{remark}{remark}{remarks}
\Crefname{remark}{Remark}{Remarks}
\Crefname{teo}{theorem}{theorems}
\Crefname{teo}{Theorem}{Theorems}
\Crefname{section}{Section}{Sections}
\Crefname{app}{Appendix}{Appendices}
\Crefname{example}{Example}{Examples}
\title[A free boundary approach to quasistatic debonding]{A free boundary approach to the quasistatic evolution of debonding models}
\author[E. Maggiorelli, F. Riva, E. G. Tolotti]{Eleonora Maggiorelli\hspace{.5mm}\orcidlink{0009-0009-7986-2087}, Filippo Riva\hspace{.5mm}\orcidlink{0000-0002-7855-1262}, and Edoardo Giovanni Tolotti\hspace{.5mm}\orcidlink{0009-0004-3279-245X}}
\begin{document}

\begin{abstract}
		The mechanical process of progressively debonding an adhesive membrane from a substrate is described as a quasistatic variational evolution of sets and herein investigated. Existence of energetic solutions, based on global minimisers of a suitable functional together with an energy balance, is obtained within the natural class of open sets, improving and simplifying previous results known in literature. The proposed approach relies on an equivalent reformulation of the model in terms of the celebrated one-phase Bernoulli free boundary problem. This point of view allows performing the Minimizing Movements scheme in spaces of functions instead of the more complicated framework of sets. Nevertheless, in order to encompass irreversibility of the phenomenon, it remains crucial to keep track of the debonded region at each discrete time-step, thus actually resulting in a coupled algorithm.
	\end{abstract}
	
	\maketitle
	
	{\small
		\keywords{\noindent {\bf Keywords:} debonding models, free boundary problems, quasistatic evolutions, energetic solutions.
		}
		\par
		\subjclass{\noindent {\bf 2020 MSC:} 
        35R35,           
        35R37,           
        74K15,           
        76D27.          

		}
	}

	\pagenumbering{arabic}
	
	\medskip
	
	\tableofcontents
	
\renewcommand{\theequation}{I.\arabic{equation}}
\section*{Introduction}

Since the pioneeristic work of De~Giorgi~\cite{DeGiorgi}, the tool of Minimizing Movements has been widely employed in the analysis of variational evolutions, especially due to its application to problems with no linear structure.
In particular, it provides a robust technique which allows dealing with geometric flows or more generally with evolutions of sets.
Among the extensive pertaining literature, we mention the groundbreaking work of Almgren, Taylor, and Wang~\cite{ATW93} on the mean curvature flow, and a recent paper~\cite{DGChambolleMorini} which analyses its anisotropic version.
We also refer to~\cite{BucButtStef,CardLey} for set evolutions in shape optimization, and to~\cite{SciannaTilli} for an application to the Hele--Shaw flow.
The great flexibility of Minimizing Movements makes them suitable to tackle also quasistatic (sometimes called rate-independent) problems, namely when the system under consideration evolves through states of equilibrium.
We quote the manuscript~\cite{MielRoubbook}, where a precise and thorough presentation on rate-independent systems is provided, and we refer to~\cite{ChambolleNovaga, RossStefThom} for an investigation on quasistatic evolutions of sets driven by the perimeter functional, and to~\cite{BourdinFrancMar,DalMasoToader} for an application to fracture mechanics.

In this paper, we are interested in quasistatic evolutions of shapes modelling the mechanical phenomenon of debonding, often also called peeling.
Although the formulation of the problem is quite understood and rather old in time (see \cite{BurridgeKeller,Hellan}), a complete mathematical treatment of debonding processes is still missing, due to their underlying geometric complexity.
Up to our knowledge, only partial results are available in literature: we mention for instance \cite{BUCUR2008} where a relaxed version of the problem in terms of measures is investigated, or~\cite{RivQuas} for the simple one-dimensional setting.
The interested Reader may also see~\cite{LMRSMovingdomains} for the formulation of dynamic debonding models, namely when also inertial effects are taken into account, and~\cite{DMLN,LMS, RivContDep, RivNar} where dynamic problems are solved and analysed in dimension one or assuming a priori radial symmetry.

We now formally describe the quasistatic problem we intend to analyse, postponing the rigorous formulation to \Cref{sec:setting}. Consider a fixed open set $\Omega$ in $\r^d$ which represents the reference configuration of an adhesive elastic membrane stuck to a substrate; the physical dimension of the model is clearly $d=2$ (or $d=1$), but from the mathematical viewpoint the problem can be stated (and solved) in an arbitrary dimension $d\in \n$. Let $t\mapsto w(t)$ be a given time-dependent prescribed vertical displacement acting on a portion $\Gamma$ of the boundary of $\Omega$. As the time advances, the map $w(t)$ forces the film to debond from the substrate, thus creating a debonded region $A(t)\subseteq\Omega$. In the quasistatic framework, the rules governing this phenomenon can be stated by introducing two ingredients: a time-dependent driving energy 
\begin{equation}\label{eq:elastic}
    \mathcal E(t,A)=\min\left\{\, \frac 12\int_\Omega |\nabla v|^2\, dx : \, v\in H^1(\Omega),\,v=w(t)\text{ on }\Gamma,\text{ and }v=0\text{ on }\Omega\setminus A\, \right\},
\end{equation}
modelling the (linearized) elastic energy possessed by the membrane, which is not glued and hence free to move on $A$, at time $t$, and a dissipation distance 
\begin{equation}\label{eq:dissdist}
    \mathcal D(B,A)=\int_{B\setminus A}\kappa\,dx+\chi_{A\subseteq B}
\end{equation}
between two possible debonded states $A$ and $B$. Above, the function $\kappa$ represents the toughness of adhesion between the membrane and the substrate, while
\begin{equation}
    \chi_{A\subseteq B}:=\begin{cases}
        0,&\text{if }A\subseteq B,\\
        +\infty,&\text{ otherwise}.
    \end{cases}
\end{equation}
Hence, the dissipation distance \eqref{eq:dissdist} encodes the quantity of energy that the membrane needs to spend in order for the debonded region to grow from $A$ to $B$, while enforcing a crucial property of the debonding process, namely irreversibility.

Following the by now consolidated energetic formulation~\cite{MielkeTheil}, we aim at finding an evolution of sets $t\mapsto A(t)$ fulfilling at every time a global stability condition and an energy(-dissipation) balance:
\begin{equation}\label{eq:ensol}
 \begin{cases}
    \displaystyle\mathcal E(t,A(t))\le \mathcal E(t,B)+\mathcal{D}(B,A(t)),\qquad\text{for all }B\subseteq \Omega,\\
    \displaystyle\mathcal E(t,A(t))+\operatorname{Diss}_{\mathcal D}(A;[0,t])=\mathcal E(0,A(0))+\int_0^t P(\tau)\,d\tau.
\end{cases}
\end{equation}
The stability condition in the first line is saying that the solution wants to minimize the elastic energy of the membrane (i.e. the debonded region wants to grow), but dissipating as little energy as possible. At the same time, the energy-balance ensures that $t\mapsto A(t)$ is non-decreasing with respect to inclusion and that the sum between free energy and dissipated energy
\begin{equation}
    \operatorname{Diss}_{\mathcal D}(A;[0,t]):=\sup\limits_{\substack{{\text{finite partitions}}\\{\text{of }[0,T]}}}\sum_{k=1}^N\mathcal D(A(t_k),A(t_{k-1}))=
         \int_{A(t)\setminus A(0)}\kappa\,dx
\end{equation}
is conserved, up to an integral term representing the work of the prescribed displacement $w$. The integrand $P$ represents the power of $w$; in smooth situations there usually holds $P(t)=\partial_t\mathcal E(t,A(t))$, but in our setting a more involved expression is needed. We refer to Definition \ref{defi:SES}, in particular \eqref{eq:EB}, for a rigorous formula.

Analogous quasistatic debonding problems have been studied in~\cite{BUCUR2008} (see also~\cite{BucButt}), looking for solutions $t\mapsto A(t)$ living within the family of quasi-open sets, a natural class for shape minimization problems~\cite{ButtazzoDM} but whose definition intrinsically needs the theory of capacity. Their argument is based on a proper relaxation of \eqref{eq:ensol} to the space of capacitary measures, i.e. measures vanishing on sets of null capacity, in order to gain the necessary compactness needed for the use of Minimizing Movements. In this way the authors prove existence of an energetic solution of (irreversible) capacitary measures. However, although very general and suitable to deal with different dissipation distances than \eqref{eq:dissdist}, the theory developed in~\cite{BUCUR2008} does not provide complete results in terms of shapes: in~\cite[Section 5]{BUCUR2008} it is indeed shown how to build an evolution of quasi-open sets fulfilling the global stability condition, but it is also pointed out that no information on the energy balance are available, unless strong geometric assumptions are in force (see~\cite[Remarks 3--4]{BUCUR2008}).

In this paper, we propose an alternative approach which allows solving problem \eqref{eq:ensol} even within the simple family of open sets, thus making the arguments more accessible and less technical as possible.
It is based on the observation, often used in shape optimization~\cite{BrianconLamboley, Bucur} (see also the monograph~\cite{Henrot}, specifically Chapter 3), that the energy \eqref{eq:elastic} is actually defined by minimizing the Dirichlet energy among scalar functions (the possible displacements of the membrane). From this point of view, the debonded region can be formally identified with the positivity set of the minimiser (actually, this identification is not completely true since irreversibility may be violated, see the correct formula \eqref{eq:debsets} below). This leads to an equivalent reformulation of problem \eqref{eq:ensol} in terms of displacements: we now look for an evolution of functions $t\mapsto u(t)$ satisfying for all times
\begin{equation}\label{eq:reformulation}
 \begin{cases}
    \displaystyle\frac 12 \int_\Omega |\nabla u(t)|^2 dx\le \frac 12 \int_\Omega |\nabla v|^2 dx+\int_{\{v>0\}\setminus A_{u(t)}}\kappa\, dx,\quad \forall \,v\in H^1(\Omega),\,v=w(t) \text{ on }\Gamma,\\
    \displaystyle\frac 12 \int_\Omega |\nabla u(t)|^2 dx+\int_{A_{u(t)}\setminus A_{u(0)}}\kappa\,d x=   \frac 12 \int_\Omega |\nabla u(0)|^2 dx+\int_0^t P(\tau)\,d\tau,
\end{cases}
\end{equation}
where in this setting the debonded region takes the form
\begin{equation}\label{eq:debsets}
        A_{u(t)}=\bigcup\limits_{s\in [0,t]}\{u(s)>0\},
    \end{equation}
namely, it is the union of all the positivity sets of the solution $u$ at previous times. Notice that formula \eqref{eq:debsets} defines an open set provided the functions $u(t)$ are continuous, as it will be the case.

Besides the clear simplification yielded by working with functions instead of sets, the main advantage of the displacement reformulation \eqref{eq:reformulation} relies on its structure: indeed, it can be seen as the quasistatic version of a free boundary problem of Alt--Caffarelli type~\cite{ALT1981}, also known as one-phase Bernoulli problem. We quote the recent monograph~\cite{Velichkov} for an exhaustive introduction on the topic. This interpretation allows us to employ existing results well-known in the literature in order to make the machinery of Minimizing Movements fully work. Our approach thus provides existence of displacement energetic solutions to \eqref{eq:reformulation}, whence existence of energetic evolutions of open sets to the original debonding problem \eqref{eq:ensol} is obtained via the formula \eqref{eq:debsets}.

We finally mention that similar quasistatic free boundary problems have been recently studied in~\cite{CollinsFeld, FeldKimPoz2, FeldKimPoz} in the context of droplet evolution in capillarity models. In this framework, $u$ represents the graph of the free boundary surrounding the droplet while its positivity set describes the wetted area. Although related, such model presents crucial differences with respect to the debonding formulation \eqref{eq:reformulation}, which directly affect the techniques adopted in the Minimizing Movements scheme. Firstly, droplet evolution is reversible, whence the whole information is carried out by the positivity set of $u$  in place of \eqref{eq:debsets}. This feature is encoded in the mathematical formulation by considering a term of the form 
\begin{equation}
    \int_{A\setminus B}\widetilde\kappa \, dx
\end{equation}
in place of the constraint $\chi_{A\subseteq B}$ in the dissipation distance \eqref{eq:dissdist}. Reversibility also simplifies the Minimizing Movements algorithm since there is no need to \lq\lq separate variables\rq\rq, as we instead do in \eqref{eq:MMS1}--\eqref{eq:MMS2} (creating additional difficulties in the limit passage as the time-step vanishes) in order to keep track of the monotonicity of the debonded region. A second difference consists in the choice of the free energy. In our debonding model \eqref{eq:reformulation} it is simply given by the Dirichlet energy, while for describing droplet evolution it needs to be augmented with the measure of the positivity set itself. As it is shown in \cite{FeldKimPoz}, this structure yields uniform bounds on the perimeter of the positivity sets, implying good compactness properties that we lack in our setting (in the terminology of~\cite{Velichkov}, solutions to \eqref{eq:reformulation} are just outward minimisers of the Alt--Caffarelli functional, while droplets evolve throughout inward minimisers). We overcome this lack of (strong) compactness by exploiting irreversibility and relying on an appropriate version of Helly's Theorem, see Proposition~\ref{prop:rho} and Definition~\eqref{eq:definition_esssupp}.

\medskip
	
\noindent\textbf{Plan of the paper.} In Section \ref{sec:preliminaries} we fix the notations we adopt throughout the whole paper, and we list some properties possessed by minimisers of constrained Dirichlet energies and of the one-phase Bernoulli free boundary problem. Section \ref{sec:setting} is devoted to the rigorous formulation of the debonding problem \eqref{eq:ensol} and to its equivalent rewriting in terms of displacements \eqref{eq:reformulation}. We also state our main existence result in Theorem \ref{thm:safe} (see also Theorem \ref{thm:general}). For the sake of completeness, the simple one-dimensional framework is described. Section \ref{sec:properties} contains some useful properties of (displacement) energetic solutions to the debonding problem which we exploit for the proof of our main result, developed in Section \ref{sec:existence}.

\renewcommand{\theequation}{\thesection.\arabic{equation}}
\section{Notations and preliminaries}\label{sec:preliminaries} 

 Given a set $\Omega\subseteq \r^d$, $d\in \n$, we denote by $\mathcal M(\Omega)$ the family of Lebesgue-measurable subsets of $\Omega$. The Lebesgue measure of $A\in \mathcal M(\Omega)$ is denoted by $|A|$. For $A,B\in \mathcal M(\Omega)$, with a slight abuse of notation we write $A\subseteq B$ if the inclusion is true up to sets of null measure, i.e. $|A\setminus B|=0$. Analogously, we write $A=B$ if $|A\setminus B|=|B\setminus A|=0$. If a property holds almost everywhere in $\Omega$ with respect to the Lebesgue measure, we often adopt the customary shortcut \lq\lq a.e. in $\Omega$\rq\rq.
 
 For any family of scalar functions $X(\Omega)$, its subset of non-negative elements is indicated by $X(\Omega)^+$. We adopt standard notations for Lebesgue, Sobolev and Bochner spaces.
 
 If $\Omega$ is open with Lipschitz boundary, and $\Gamma \subseteq \partial \Omega$ has positive Hausdorff measure $\mathcal{H}^{d-1}(\Gamma) > 0$, given $\eta\in H^1(\Omega)$ and $A\in \mathcal M(\Omega)$ we introduce the expressions
 \begin{align}
 H^1_{\Gamma,\eta}(\Omega)&:=\{\varphi\in H^1(\Omega):\, \varphi=\eta \text{ on }\Gamma \},\\
     H^1_{\Gamma,\eta}(\Omega,A)&:=\{\varphi\in H^1_{\Gamma,\eta}(\Omega):\, \varphi=0 \text{ a.e. on }\Omega\setminus A\}.
 \end{align}
Observe that $H^1_{\Gamma,\eta}(\Omega,A)$ is a closed convex subset of $H^1(\Omega)$ and that $H^1_{\Gamma,\eta}(\Omega,A)=H^1_{\Gamma,\eta}(\Omega)$ whenever $|\Omega\setminus A|=0$.
 
 In view of the applications to debonding models, it will be useful to define the classes of admissible measurable and open sets with respect to the boundary value $\eta$ on $\Gamma$. This is done as follows:
 \begin{align}
     \mathcal M_{\Gamma,\eta}&:=\{A\in \mathcal M(\Omega):\,  H^1_{\Gamma,\eta}(\Omega,A)\text{ is nonempty}\},\\
     \mathcal O_{\Gamma,\eta}&:=\{A\in \mathcal M_{\Gamma,\eta}:\, A\text{ is open}\}.
 \end{align}

Finally, for any $A\in \mathcal M(\Omega)$ we denote by $A^+_{\Gamma,\eta}$ the union of all the connected components of $A$ whose boundary contains a subset of $\Gamma$ of positive Hausdorff measure on which $\eta>0$.

\subsection{minimisers of one-phase free boundary functionals}
We collect here some useful results regarding the main properties of minimisers of Dirichlet-type energies, including some well-known fact about the celebrated one-phase Bernoulli free boundary problem first analysed from a variational viewpoint by Alt and Caffarelli~\cite{ALT1981}. We also provide some additional notation which will be needed in the next section to rigorously introduce the quasistatic formulation of the debonding model under consideration.

This first elementary lemma deals with minimisers of the standard Dirichlet energy among functions vanishing outside a given set $A$. Here and henceforth, we tacitly require that $\Omega$ is open, connected, bounded, with Lipschitz boundary and that $\Gamma \subseteq \partial \Omega$ has positive Hausdorff measure.

\begin{lemma}\label{lemma:minimiser_dirichlet}
    Let $\eta\in H^1(\Omega)$ and $A \in \mathcal M_{\Gamma,\eta} $.  There exists a unique minimiser, denoted by $\mathfrak{h}_{A,\eta}$, of the Dirichlet functional
    \[
      D(v) = \frac 12 \int_\Omega |\nabla v|^2 \, dx \,
    \]
    over the set $H^1_{\Gamma,\eta}(\Omega,A)$. The minimiser $\mathfrak{h}_{A, \eta}$ is equivalently characterized by the Euler-Lagrange equation
    \begin{equation}\label{eq:euler_lagrange_dirichlet}
        \int_\Omega \nabla \mathfrak{h}_{A, \eta} \cdot \nabla \phi \, dx  = 0\quad \text{for all } \, \phi \in H^1_{\Gamma, 0}(\Omega, A) \,. 
    \end{equation}
    If in addition $A$ is open, i.e. $A\in\mathcal O_{\Gamma,\eta}$, then $\mathfrak{h}_{A,\eta}$ is harmonic in $A$; furthermore, if $\eta\in H^1(\Omega)^+$, then $0\leq \mathfrak{h}_{A,\eta} \leq \esssup_\Gamma\eta$ a.e. in $\Omega$ and there holds $\{\mathfrak{h}_{A,\eta}>0\}=A^+_{\Gamma,\eta}$.
\end{lemma}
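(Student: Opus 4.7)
The plan is to apply the direct method of the calculus of variations. The set $H^1_{\Gamma,\eta}(\Omega,A)$ is non-empty (as $A\in\mathcal M_{\Gamma,\eta}$), closed and convex in $H^1(\Omega)$, and the Dirichlet functional $D$ is continuous, strictly convex (being quadratic in $\nabla v$), and coercive along this affine class: writing any admissible $v$ as $\eta+\tilde v$ with $\tilde v\in H^1_{\Gamma,0}(\Omega,A)\subseteq H^1_{\Gamma,0}(\Omega)$, Poincaré's inequality---valid because $\mathcal H^{d-1}(\Gamma)>0$---controls $\|\tilde v\|_{H^1}$ by $\|\nabla\tilde v\|_{L^2}$. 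Existence and uniqueness of $\mathfrak{h}_{A,\eta}$ then follow. The equivalent characterization via \eqref{eq:euler_lagrange_dirichlet} is standard: the first variation $\frac{d}{dt}D(\mathfrak{h}_{A,\eta}+t\phi)\big|_{t=0}=0$ for $\phi\in H^1_{\Gamma,0}(\Omega,A)$ yields the equation, while the converse follows from the quadratic identity $D(v)=D(\mathfrak{h}_{A,\eta})+\int_\Omega\nabla\mathfrak{h}_{A,\eta}\cdot\nabla(v-\mathfrak{h}_{A,\eta})\,dx+D(v-\mathfrak{h}_{A,\eta})$ combined with the fact that $v-\mathfrak{h}_{A,\eta}\in H^1_{\Gamma,0}(\Omega,A)$ for every admissible $v$.

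When $A$ is open, every $\phi\in C_c^\infty(A)$, extended by zero, lies in $H^1_{\Gamma,0}(\Omega,A)$, so \eqref{eq:euler_lagrange_dirichlet} forces $\mathfrak{h}_{A,\eta}$ to be distributionally harmonic in $A$, hence classically harmonic by Weyl's lemma. The pointwise bounds when $\eta\in H^1(\Omega)^+$ would follow by truncation: both $\mathfrak{h}_{A,\eta}^+$ and $\min(\mathfrak{h}_{A,\eta}^+,M)$, with $M:=\esssup_\Gamma\eta$, still belong to $H^1_{\Gamma,\eta}(\Omega,A)$, since they vanish wherever $\mathfrak{h}_{A,\eta}$ does (preserving the support constraint) and coincide with $\eta$ on $\Gamma$ (recall $\eta\in[0,M]$ there). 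Their Dirichlet energies do not exceed $D(\mathfrak{h}_{A,\eta})$, with equality forcing $0\le\mathfrak{h}_{A,\eta}\le M$ a.e., after which uniqueness of the minimiser closes the argument.

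For the set identity $\{\mathfrak{h}_{A,\eta}>0\}=A^+_{\Gamma,\eta}$, I would proceed component by component. Being harmonic and non-negative, $\mathfrak{h}_{A,\eta}$ is either strictly positive or identically zero on each connected component $A_0$ of $A$, by the strong maximum principle. On a \emph{good} component, whose boundary meets $\Gamma$ in a positive Hausdorff-measure subset where $\eta>0$, the boundary trace of $\mathfrak{h}_{A,\eta}$ equals $\eta>0$ on that subset, excluding the vanishing alternative. On a \emph{bad} component $A_0$ the plan is to use the competitor $v:=\mathfrak{h}_{A,\eta}\,\chi_{\Omega\setminus A_0}$: since $A_0$ is a connected component of the open set $A$, one has $\partial A_0\cap\Omega\subseteq\Omega\setminus A$ where $\mathfrak{h}_{A,\eta}$ vanishes, while the badness hypothesis gives $\eta=0$ $\mathcal H^{d-1}$-a.e.\ on $\partial A_0\cap\Gamma$; these two facts together should yield $v\in H^1_{\Gamma,\eta}(\Omega,A)$ with $D(v)=D(\mathfrak{h}_{A,\eta})-\tfrac{1}{2}\int_{A_0}|\nabla\mathfrak{h}_{A,\eta}|^2\,dx$, and uniqueness of the minimiser then forces $\mathfrak{h}_{A,\eta}\equiv 0$ in $A_0$. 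I expect the most delicate point to be the rigorous justification that this competitor is admissible: since $A_0$ is a priori merely open, possibly with very irregular boundary, showing $\mathfrak{h}_{A,\eta}\,\chi_{A_0}\in H^1(\Omega)$ with vanishing trace on $\Gamma$ requires a trace/capacity-type argument exploiting that $\mathfrak{h}_{A,\eta}$ already vanishes a.e.\ on $\Omega\setminus A\supseteq\partial A_0\cap\Omega$.
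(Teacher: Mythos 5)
Your first four steps (existence and uniqueness by the direct method with Poincar\'e coercivity, the Euler--Lagrange characterization via first variation plus the quadratic identity, harmonicity from $C_c^\infty(A)$ test functions and Weyl's lemma, and the bound $0\le\mathfrak{h}_{A,\eta}\le\esssup_\Gamma\eta$ by truncation) are correct and follow essentially the paper's route; for the $L^\infty$ bound the paper invokes the maximum principle, and your truncation competitors are a perfectly good, if anything more self-contained, substitute. The problem is the final identity $\{\mathfrak{h}_{A,\eta}>0\}=A^+_{\Gamma,\eta}$, which the paper dispatches in one line via the strong maximum principle and which you rightly identify as the delicate point: the admissibility of the competitor $v=\mathfrak{h}_{A,\eta}\,\chi_{\Omega\setminus A_0}$ on a bad component is a genuine gap, not a removable technicality, and it cannot be closed in the stated generality.

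Concretely, $\mathfrak{h}_{A,\eta}\chi_{A_0}\in H^1(\Omega)$ with the expected gradient requires $\mathfrak{h}_{A,\eta}$ to vanish on $\partial A_0\cap\Omega$ in a quasi-everywhere (capacity) sense, whereas membership in $H^1_{\Gamma,\eta}(\Omega,A)$ only gives Lebesgue-a.e.\ vanishing on $\Omega\setminus A$, which is vacuous wherever $\Omega\setminus A$ is Lebesgue-null. Take $\Omega=(-1,1)^2$, $\Gamma=\partial\Omega$, $\eta(x,y)=\max\{y,0\}$ and $A=\Omega\setminus\{y=0\}$: then $|\Omega\setminus A|=0$, so $H^1_{\Gamma,\eta}(\Omega,A)=H^1_{\Gamma,\eta}(\Omega)$ and $\mathfrak{h}_{A,\eta}$ is the harmonic extension of $\eta$, strictly positive in all of $\Omega$ by the strong minimum principle; yet the lower half-square is a connected component of $A$ whose boundary meets $\Gamma$ only where $\eta=0$, so $A^+_{\Gamma,\eta}$ is just the upper half. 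Your competitor would jump across the slit and is not in $H^1(\Omega)$, and in fact the asserted set identity itself fails for this admissible $A$ (even modulo null sets), so no refinement of the competitor argument can rescue the step without extra hypotheses on how $\Omega\setminus A$ separates the components (e.g.\ a positive-density or capacitary condition along $\partial A_0\cap\Omega$) or a quasi-everywhere reading of the constraint. A milder version of the same issue affects your good-component step: if $\mathfrak{h}_{A,\eta}\equiv 0$ on $A_0$ this does not by itself force its trace to vanish $\mathcal H^{d-1}$-a.e.\ on $\partial A_0\cap\Gamma$, since the trace is computed from all of $\Omega$ and other components may accumulate there. In fairness, the paper's own proof (\lq\lq the Strong Maximum Principle implies\dots\rq\rq) glosses over exactly these points, but as a complete proof your proposal leaves this step open.
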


\begin{proof}
    Since $H^1_{\Gamma,\eta}(\Omega, A)$ is non-empty by assumption, the existence of the minimum can be proved via the Direct Method of the Calculus of Variations.
    Uniqueness follows from the strict convexity of $D$ and the convexity of $H^1_{\Gamma,\eta}(\Omega, A)$.
    For any $\varepsilon > 0$ and $\phi \in H^1_{\Gamma,0}(\Omega, A)$, by using $\mathfrak{h}_{A,\eta}+\varepsilon \phi$ as a competitor, one easily deduces the validity of the Euler-Lagrange equation \eqref{eq:euler_lagrange_dirichlet}, which thus characterizes the minimiser $\mathfrak{h}_{A,\eta}$ by convexity. 
    
    If $A$ is open, it immediately follows by \eqref{eq:euler_lagrange_dirichlet} that $\mathfrak{h}_{A,\eta}$ is harmonic in $A$. If $\eta$ is non-negative, the Maximum Principle now yields $0\leq \mathfrak{h}_{A,\eta} \leq \esssup_\Gamma\eta$ a.e. in $\Omega$, and additionally the Strong Maximum Principle implies that $\{\mathfrak{h}_{A,\eta}>0\}=A^+_{\Gamma,\eta}$.
\end{proof}

Since the boundary of $A$ may be very wild, global regularity of $\mathfrak{h}_{A,\eta}$ is not available in general. However, better properties can be obtained for minimisers of the following functional, which can be seen as a regularization of the constraint $u=0$ outside $A$ since it penalises therein the measure of the positivity set of $u$. In particular, a free boundary is expected to appear.

Given $\kappa\in L^\infty(\Omega)^+$, we consider the Alt--Caffarelli functional $\mathcal{AC}\colon H^1(\Omega)\times\mathcal M(\Omega)\to [0,+\infty)$ defined as:
\begin{equation}\label{eq:E}
\mathcal{AC}(u,A):=D(u)+\int_{\{u>0\}\setminus A} \kappa  \,dx = \frac 12 \int_\Omega |\nabla u|^2 \, dx+\int_{\{u>0\}\setminus A} \kappa  \,dx \, .
\end{equation}

Setting $Q_A=\sqrt{2\kappa}(1-\chi_A)$, we observe that
\[
    2\mathcal{AC}(u,A)= \int_\Omega |\nabla u|^2 \, dx+\int_{\{u>0\}} Q_A^2  \,dx \,,
\]
hence the functional can be seen as a (one-phase) Bernoulli free boundary problem, in the notation of~\cite{ALT1981}.
Since $Q_A^2$ is bounded from above, we may exploit some of the results contained in~\cite{ALT1981} (see also the monograph~\cite{Velichkov}), deducing the following properties.

\begin{lemma}\label{lemma:minimiser_AltCaffarelli}
Let $\eta\in H^1(\Omega)^+$ and let $A\in \mathcal M(\Omega)$. Then the functional $v\mapsto \mathcal{AC}(v,A)$ admits a minimum in the class $H^1_{\Gamma,\eta}(\Omega)$. Moreover, any minimiser $u$ is locally Lipschitz continuous in $\Omega$ and for all $\Omega'$ well contained in $\Omega$ there exists a constant $C'>0$, independent of $A$, such that:
\begin{equation}
    \|u\|_{C^{0,1}(\Omega')}\leq C'(1+\esssup_\Gamma\eta).
\end{equation}
Furthermore, $u$ is harmonic in the open set $\{u>0\}$ and there holds $0\leq u\leq \esssup_\Gamma\eta$ in $\Omega$. Finally, one has $(\inter A)^+_{\Gamma,\eta}\subseteq\{u>0\}$, where $\inter A$ denotes the interior of $A$.
\end{lemma}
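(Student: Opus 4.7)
My plan is to apply the Direct Method. Since $\mathcal{AC}(v,A)\ge D(v)$, any minimising sequence $(u_n)\subset H^1_{\Gamma,\eta}(\Omega)$ has uniformly bounded Dirichlet energy; combined with the trace constraint on $\Gamma$ (of positive $\mathcal{H}^{d-1}$-measure), Poincar\'e gives a uniform $H^1$-bound. I extract a weakly convergent subsequence $u_n\rightharpoonup u$ in $H^1$ and, via Rellich, a further subsequence with $u_n\to u$ a.e. Weak lower semicontinuity of the Dirichlet term is immediate. For the penalty, the pointwise convergence yields $\chi_{\{u>0\}}\le\liminf\chi_{\{u_n>0\}}$ a.e., and Fatou's lemma against the non-negative integrand $\kappa\chi_{\Omega\setminus A}$ closes the lower semicontinuity, giving the existence of a minimiser.

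\textbf{Pointwise bounds.} Setting $M:=\esssup_\Gamma\eta$, I would compare $u$ with $\tilde u=\min(u^+,M)$, which belongs to $H^1_{\Gamma,\eta}(\Omega)$ since $0\le\eta\le M$ on $\Gamma$. Crucially, $\{\tilde u>0\}=\{u>0\}$, so the penalty is unchanged, while the chain rule yields $|\nabla\tilde u|^2=|\nabla u|^2\chi_{\{0<u<M\}}$ a.e. Minimality then forces $\nabla u=0$ a.e.\ on $\{u<0\}\cup\{u>M\}$, so both $u^-=\min(u,0)$ and $(u-M)^+$ have vanishing weak gradient and are thus constant on the connected open set $\Omega$; since their traces on $\Gamma$ are zero, they vanish identically, giving $0\le u\le M$ a.e.

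\textbf{Harmonicity and interior Lipschitz estimate.} For $\phi\in C^\infty_c(\{u>0\})$, the positivity set of $u+\varepsilon\phi$ coincides with $\{u>0\}$ for small $|\varepsilon|$, leaving the penalty locally constant; minimality then yields the Euler--Lagrange equation $\int_\Omega\nabla u\cdot\nabla\phi\,dx=0$, whence $u$ is harmonic in $\{u>0\}$ (an open set once we know $u$ is continuous). The uniform interior Lipschitz estimate I would quote directly from the Alt--Caffarelli theory~\cite{ALT1981} (see also~\cite[Chapter 3]{Velichkov}): since $u$ is a non-negative minimiser of a Bernoulli-type functional with $Q_A^2\le 2\|\kappa\|_\infty$ and $\|u\|_{L^\infty(\Omega)}\le M$, the standard estimate depends only on $\|\kappa\|_\infty$, $M$, the dimension $d$, and $\dist(\Omega',\partial\Omega)$, producing $\|u\|_{C^{0,1}(\Omega')}\le C'(1+M)$ with $C'$ independent of $A$.

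\textbf{Final inclusion.} Let $C$ be a connected component of $\inter A$ with $\mathcal{H}^{d-1}(\partial C\cap\Gamma\cap\{\eta>0\})>0$. For $\phi\in C^\infty_c(C)$, the symmetric difference between the positivity sets of $u$ and $u+t\phi$ lies in $C\subseteq A$, so the penalty is invariant and minimality forces $u$ to be harmonic in $C$. The Strong Maximum Principle on the connected open set $C$ then gives the dichotomy $u>0$ on $C$ or $u\equiv 0$ on $C$. The second alternative is incompatible with the boundary data: a standard trace argument on the Lipschitz domain $\Omega$ shows that $u\equiv 0$ on $C$ would force the trace of $u$ on $\Gamma\cap\partial C$ to vanish $\mathcal{H}^{d-1}$-a.e., contradicting $u|_\Gamma=\eta$ with $\eta>0$ on a subset of $\Gamma\cap\partial C$ of positive measure. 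This trace dichotomy is the most delicate point of the argument; the other steps are essentially standard applications of the Direct Method, truncation, and imported Alt--Caffarelli regularity.
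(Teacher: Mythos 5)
The existence, truncation ($0\le u\le\esssup_\Gamma\eta$), harmonicity, and interior Lipschitz steps of your argument are fine, and they essentially coincide with what the paper does (it simply quotes~\cite{ALT1981} for those facts, as you do for the Lipschitz bound). The problem is the final inclusion $(\inter A)^+_{\Gamma,\eta}\subseteq\{u>0\}$, which is the only part the paper actually proves, and there your argument has a genuine gap. After the (correct) maximum-principle dichotomy on a component $C$ of $\inter A$, you exclude $u\equiv 0$ on $C$ by claiming that ``a standard trace argument'' shows that an $H^1(\Omega)$ function vanishing a.e.\ on $C$ must have vanishing trace $\mathcal H^{d-1}$-a.e.\ on $\Gamma\cap\partial C$. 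This is not a standard fact, and as a statement about $H^1$ functions and open connected sets it is false in dimension $d\ge 3$: the set $\partial C\cap\Gamma$ can have positive $\mathcal H^{d-1}$-measure while $C$ has zero Lebesgue (indeed zero capacity) density at $\mathcal H^{d-1}$-a.e.\ such point --- take for $C$ a connected union of extremely thin vertical ``hairs'' attached to a countable dense subset of $\Gamma$, with radii shrinking fast enough that the total capacity is small, joined by a slab far from $\Gamma$. One can then construct $u\in H^1(\Omega)$ vanishing a.e.\ on $C$ whose trace equals $1$ on most of $\partial C\cap\Gamma$, because the Lebesgue-point limits defining the trace are insensitive to a zero-density set. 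So vanishing on $C$ simply does not control the trace on $\partial C\cap\Gamma$; to exclude $u\equiv0$ on $C$ you would have to use minimality (or harmonicity plus capacity/fine-topology arguments), which your proof does not do. Even in $d=2$, where connectivity of $C$ does force non-thinness at every point of $\overline C$ (a continuum has capacity comparable to its diameter), rescuing your step requires a Wiener-type/fine-limit argument, not a routine trace computation, and you give neither.

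The paper avoids this issue entirely by an energy-comparison/uniqueness trick: set $B:=(\inter A)^+_{\Gamma,\eta}\cup\{u>0\}$, which is open and satisfies $u\in H^1_{\Gamma,\eta}(\Omega,B)$. Comparing $u$ with $\mathfrak{h}_{B,\eta}$ in the Alt--Caffarelli functional and noting $\{\mathfrak{h}_{B,\eta}>0\}\setminus A\subseteq B\setminus A\subseteq\{u>0\}\setminus A$ (the penalty does not increase), one gets $D(u)\le D(\mathfrak{h}_{B,\eta})$, hence $u=\mathfrak{h}_{B,\eta}$ by uniqueness in \Cref{lemma:minimiser_dirichlet}, and then $\{u>0\}=B^+_{\Gamma,\eta}\supseteq(\inter A)^+_{\Gamma,\eta}$ by the characterization of the positivity set of the Dirichlet minimiser established there. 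I suggest you replace your trace argument with this comparison, or else supply a genuine capacity-based proof of the excluded alternative (and restrict to $d=2$ if you want to exploit non-thinness of continua).
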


\begin{proof} 
    The only property not proved in~\cite{ALT1981} is the fact that $(\inter A)^+_{\Gamma,\eta}\subseteq\{u>0\}$. To show it, let us define the open set $B:= (\inter A)^+_{\Gamma,\eta} \cup \{u>0\}$. By definition, one has $u\in H^1_{\Gamma, \eta}(\Omega, B)$ and hence $B\in \mathcal O_{\Gamma,\eta}$. Since the function $\mathfrak{h}_{B,\eta}$ belongs to $H^1_{\Gamma,\eta}(\Omega)$, we know that $\mathcal{AC}( u, A)\leq \mathcal{AC}(\mathfrak{h}_{B,\eta}, A)$.
    Moreover, by \Cref{lemma:minimiser_dirichlet} and by the definition of $B$ we have
    \begin{equation}
        \{ \mathfrak{h}_{B,\eta}>0\} \setminus A \subseteq B \setminus A \subseteq \{ u>0\}\setminus A \,.
    \end{equation}
    Thus, it follows that
    \begin{equation}
        D(u)\le D(\mathfrak{h}_{B,\eta}).
   \end{equation}
    By the uniqueness of the minimiser proved in \Cref{lemma:minimiser_dirichlet}, we finally infer that $u$ coincides with $\mathfrak{h}_{B,\eta}$, and so $\{u > 0\}=\{\mathfrak{h}_{B,\eta} > 0\}=B^+_{\Gamma,\eta}\supseteq(\inter A)^+_{\Gamma,\eta}$ .
\end{proof}

\begin{remark}
    Note that $Q^2_A$ vanishes on $A$, hence $Q_A$ is not bounded below by a positive constant and so not all the results of~\cite{ALT1981} apply. For instance, the positivity set $\{u>0\}$ may not be of (locally) finite perimeter.
\end{remark}

Next lemma lists some simple, but useful for the rest of the paper, properties which are equivalent for a function $u$ to be a minimiser of $\mathcal{AC}(\cdot, A)$, knowing a priori that $u$ vanishes outside $A$.
Notice that condition~\eqref{item:equiv4} below just implies the outward minimality property in the sense of~\cite{Velichkov}, see for instance Chapter~3.

\begin{lemma} \label{lemma:equivalent}
    Let $\eta\in H^1(\Omega)^+$ and let $A\in \mathcal M_{\Gamma,\eta}$. For $u \in H^1_{\Gamma,\eta}(\Omega,A)$ the following are equivalent:
    \begin{enumerate}
        \item $\mathcal{AC}(u,A)\le \mathcal{AC}(v,A),\qquad$ for all $v\in H^1_{\Gamma,\eta}(\Omega)$; \label{item:equiv1}
        \item $\mathcal{AC}(u,A)\le \mathcal{AC}(v,A),\qquad$ for all $v\in H^1_{\Gamma,\eta}(\Omega)\cap C^0(\Omega)$;\label{item:equiv2}

        \item $\displaystyle \frac 12\int_\Omega |\nabla u|^2 \, dx  \leq \frac 12\int_\Omega |\nabla v|^2 \, dx + \int_{\{v > 0\}\setminus A}\kappa\, dx,\qquad$ for all $v\in H^1_{\Gamma,\eta}(\Omega)$;\label{item:equiv3}
        \item $\displaystyle \frac 12\int_\Omega |\nabla u|^2 \, dx  \leq \frac 12\int_\Omega |\nabla v|^2 \, dx + \int_{\{v > 0\}\setminus A}\kappa\, dx,\qquad$ for all $v\in H^1_{\Gamma,\eta}(\Omega)^+\cap C^0(\Omega)$ satisfying $A\subseteq \{v>0\}$.\label{item:equiv4}
    \end{enumerate}
    If one of the above holds, then $u=\mathfrak{h}_{A,\eta}$ and furthermore it possesses all the properties stated in Lemma \ref{lemma:minimiser_AltCaffarelli}.
\end{lemma}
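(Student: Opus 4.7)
My plan is to establish the equivalence via $\eqref{item:equiv1} \Leftrightarrow \eqref{item:equiv3}$ (which is tautological) together with the chain $\eqref{item:equiv1} \Rightarrow \eqref{item:equiv2} \Rightarrow \eqref{item:equiv4} \Rightarrow \eqref{item:equiv1}$. Two of these arrows come for free: the implications $\eqref{item:equiv1} \Rightarrow \eqref{item:equiv2} \Rightarrow \eqref{item:equiv4}$ hold simply because the respective test classes are nested, namely $H^1_{\Gamma,\eta}(\Omega)\cap C^0(\Omega) \subseteq H^1_{\Gamma,\eta}(\Omega)$ and, since $\eta \ge 0$, the class of \eqref{item:equiv4} sits inside $H^1_{\Gamma,\eta}(\Omega)\cap C^0(\Omega)$. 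The identity $\eqref{item:equiv1} \Leftrightarrow \eqref{item:equiv3}$ follows at once from the hypothesis $u \in H^1_{\Gamma,\eta}(\Omega,A)$, which forces $u=0$ a.e.\ on $\Omega\setminus A$ and hence $|\{u>0\}\setminus A|=0$: the excess integral in $\mathcal{AC}(u,A)$ vanishes and the two inequalities read identically.

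The crux is the closing implication $\eqref{item:equiv4} \Rightarrow \eqref{item:equiv1}$. The key idea is to leverage the Alt--Caffarelli minimiser $u^* \in H^1_{\Gamma,\eta}(\Omega)^+ \cap C^0(\Omega)$ of $\mathcal{AC}(\cdot,A)$ granted by \Cref{lemma:minimiser_AltCaffarelli}, aiming to deduce $D(u) \le \mathcal{AC}(u^*,A)$ by plugging a suitable regularisation of $u^*$ into \eqref{item:equiv4}. The obstruction is that in general $A \not\subseteq \{u^*>0\}$, so $u^*$ itself is not an admissible test function. To bypass this I would build a small Lipschitz bump as follows: by outer regularity of the Lebesgue measure, for every $\varepsilon>0$ pick an open set $U_\varepsilon$ with $A \subseteq U_\varepsilon \subseteq \Omega$ and $|U_\varepsilon \setminus A| < \varepsilon$, and set
\[
\psi_\varepsilon(x) := \dist\bigl(x,\,(\Omega \setminus U_\varepsilon)\cup \Gamma\bigr).
\]
Then $\psi_\varepsilon \ge 0$, it vanishes on $\Gamma$, the positivity set $\{\psi_\varepsilon>0\}$ coincides up to null sets with $U_\varepsilon \setminus \Gamma$, and consequently $A \subseteq \{\psi_\varepsilon>0\}$ and $|\{\psi_\varepsilon>0\}\setminus A|\le \varepsilon$. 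For every $\lambda>0$ the function $u^*+\lambda\psi_\varepsilon$ is therefore an admissible competitor in \eqref{item:equiv4}, yielding
\[
D(u) \;\le\; D(u^*+\lambda\psi_\varepsilon) + \int_{(\{u^*>0\}\cup\{\psi_\varepsilon>0\})\setminus A} \kappa\,dx,
\]
where I have used that, both summands being pointwise non-negative, the positivity set $\{u^*+\lambda\psi_\varepsilon>0\}$ does not depend on $\lambda>0$. Sending $\lambda\to 0^+$ and then $\varepsilon\to 0^+$---bounding the integral above by $\int_{\{u^*>0\}\setminus A}\kappa\,dx + \|\kappa\|_\infty\varepsilon$---I recover $D(u)\le \mathcal{AC}(u^*,A)$. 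Combining this with $\mathcal{AC}(u,A)=D(u)$ and the minimality of $u^*$ gives $\mathcal{AC}(u,A) \le \mathcal{AC}(u^*,A) \le \mathcal{AC}(v,A)$ for every $v\in H^1_{\Gamma,\eta}(\Omega)$, which is exactly \eqref{item:equiv1}.

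Once the four conditions are known equivalent, I would deduce the final claim by testing \eqref{item:equiv3} against an arbitrary $v \in H^1_{\Gamma,\eta}(\Omega,A)$: this makes the integral term vanish, so $D(u)\le D(v)$ for every such $v$ and the uniqueness statement in \Cref{lemma:minimiser_dirichlet} identifies $u$ with $\mathfrak{h}_{A,\eta}$. The additional regularity properties are then simply those listed in \Cref{lemma:minimiser_AltCaffarelli} applied to $u$, which by now is known to minimise $\mathcal{AC}(\cdot,A)$. The main obstacle I expect is precisely the bump construction: the constraint $A \subseteq \{v>0\}$ in \eqref{item:equiv4} is what prevents a naive $H^1$-approximation of a generic competitor (positivity sets behave badly under $H^1$-convergence), and engineering $\psi_\varepsilon$ to be simultaneously Lipschitz, zero on $\Gamma$, positive on an open set just slightly larger than $A$, and of arbitrarily small excess $|\{\psi_\varepsilon>0\}\setminus A|$ is the delicate point on which the whole argument hinges.
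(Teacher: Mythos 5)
Your proposal is correct and uses essentially the same approach as the paper: the decisive device is the same Lipschitz distance-function bump supported on an open set $A_\varepsilon\supseteq A$ with $|A_\varepsilon\setminus A|\le\varepsilon$ furnished by outer regularity, combined with the existence and continuity of the Alt--Caffarelli minimiser from \Cref{lemma:minimiser_AltCaffarelli}, and the identification $u=\mathfrak{h}_{A,\eta}$ via the uniqueness in \Cref{lemma:minimiser_dirichlet}. The only (harmless) difference is organisational: the paper perturbs an arbitrary continuous competitor, taking $v^++\varepsilon\varphi_\varepsilon$ to prove \eqref{item:equiv4}$\Rightarrow$\eqref{item:equiv2} and then closes the loop through the continuous minimiser, whereas you perturb only the minimiser $u^*$ itself by $\lambda\psi_\varepsilon$ to obtain \eqref{item:equiv4}$\Rightarrow$\eqref{item:equiv1} directly.
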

\begin{proof} 
    \Cref{lemma:minimiser_AltCaffarelli} yields the equivalence between \eqref{item:equiv1} and \eqref{item:equiv2}. Since $u$ belongs to $H^1_{\Gamma,\eta}(\Omega,A)$ by assumption, the set $\{u>0\}$ is contained in $A$; this implies
    \begin{equation}
       \mathcal{AC}(u,A)=\frac 12 \int_\Omega |\nabla u|^2 \, dx, 
    \end{equation}
    and so \eqref{item:equiv1} is also equivalent to \eqref{item:equiv3}.

    Trivially \eqref{item:equiv3} implies \eqref{item:equiv4}, so we just need to prove the reverse implication; equivalently, we will actually show that \eqref{item:equiv4} implies \eqref{item:equiv2}.

    To this aim, let us fix $v\in  H^1_{\Gamma,\eta}(\Omega)\cap C^0(\Omega)$. By outer regularity of the Lebesgue measure, for any $\varepsilon >0$ there exists an open set $A_\varepsilon\subseteq \Omega$ containing $A$ and satisfying $|A_\varepsilon\setminus A|\le \varepsilon$. We now consider the 1-Lipschitz function $\varphi_\varepsilon(x):=\operatorname{dist}(x;\overline{\Omega}\setminus A_\varepsilon)$, which in particular satisfies $\{\varphi_\varepsilon>0\}=A_\varepsilon$, and we set $v_\varepsilon:=v^++\varepsilon\varphi_\varepsilon $. Observing that $v_\varepsilon\in H^1_{\Gamma,\eta}(\Omega)^+\cap C^0(\Omega)$ and that $A\subseteq A_\varepsilon\subseteq A_\varepsilon\cup \{v>0\}=\{v_\varepsilon>0\}$, by means of \eqref{item:equiv4} we obtain
    \begin{align}
        \mathcal{AC}(u,A)&=\frac 12 \int_\Omega |\nabla u|^2 dx\le \frac 12 \int_\Omega |\nabla v_\varepsilon|^2 dx+\int_{\{v_\varepsilon>0\}\setminus A}\kappa\, dx\\
        &=\frac 12 \int_\Omega |\nabla v^+|^2 dx+\frac{\varepsilon^2}{2}\int_\Omega |\nabla \varphi_\varepsilon|^2 dx+ \varepsilon\int_\Omega \nabla v^+\cdot\nabla \varphi_\varepsilon dx+ \int_{(\{v>0\}\cup A_\varepsilon)\setminus A}\kappa\, dx\\
        &\le \frac 12 \int_\Omega |\nabla v|^2 dx +\int_{\{v>0\}\setminus A}\kappa\, dx+\frac{\varepsilon^2}{2}|\Omega|+\varepsilon|\Omega|^{1/2}\|\nabla v\|_{L^2(\Omega)}+\varepsilon\|\kappa\|_{L^\infty(\Omega)}.
    \end{align}
    By sending $\varepsilon\to 0$ we deduce $\mathcal{AC}(u,A)\le \mathcal{AC}(v,A)$, namely \eqref{item:equiv2} holds true and equivalence between \eqref{item:equiv1}, \eqref{item:equiv2}, \eqref{item:equiv3} and \eqref{item:equiv4} is proved.

    Moreover, we notice that choosing $v=\mathfrak{h}_{A,\eta}$ in \eqref{item:equiv3}, since $\{\mathfrak{h}_{A,\eta}>0\}\subseteq A$ we obtain 
    \begin{equation}
        \frac 12\int_\Omega |\nabla u|^2 \, dx   \leq \frac 12 \int_\Omega |\nabla \mathfrak{h}_{A,\eta}|^2 \, dx \,,
    \end{equation}
    whence $u=\mathfrak{h}_{A,\eta}$  by uniqueness of the minimiser.
    Finally, due to \eqref{item:equiv1}, \Cref{lemma:minimiser_AltCaffarelli} applies to the function $u$.
\end{proof}

\section{Setting and main results}\label{sec:setting}

In this section we rigorously formulate the debonding problem under study as a quasistatic evolution of open sets. We also introduce the more manageable free boundary reformulation in terms of displacements, and we prove their equivalence. We then state our main results, regarding existence of this latter notion of solution. Finally, the simpler one-dimensional case, which provides explicit examples, is presented.

Let $\Omega \subseteq \r^d$ be an open, connected, bounded, and Lipschitz set representing the reference configuration of a linearly elastic adhesive film, and let $\Gamma \subseteq \partial \Omega$ with $\mathcal{H}^{d-1}(\Gamma) > 0$ be the portion of the boundary where a time-dependent vertical displacement $w(t)$, which drives the evolution of the system, is prescribed. As customary, we require that $w$ is the trace over $\Gamma$ of a function, still denoted by $w$,  of class $ AC([0,T];H^1(\Omega))$, where $T>0$ is an arbitrary horizon time, and we assume that it satisfies
\begin{equation}\label{eq:boundednessw}
    0\le w(t)\le M\quad\text{a.e. in }\Omega,\text{ for all }t\in [0,T].
\end{equation}

For $t\in [0,T]$, the free energy of a set $A\in \mathcal M_{\Gamma,w(t)}$, which will represent the debonded part of the film at time $t$, is the minimal Dirichlet energy of functions supported in $A$ attaining the correct boundary value $w(t)$ on $\Gamma$, namely
\begin{equation}
    \mathcal E(t,A):=\min\limits_{v \in H^1_{\Gamma,w(t)}(\Omega,A)}\frac 12 \int_\Omega|\nabla v|^2 \,dx.
\end{equation}
We recall that by \Cref{lemma:minimiser_dirichlet} one has 
\begin{equation}\label{eq:energy}
    \mathcal E(t,A)=\frac 12 \int_\Omega|\nabla \mathfrak{h}_{A,w(t)}|^2 \,dx.
\end{equation}

We also consider an initial debonded region $A_0\in \mathcal O_{\Gamma,w(0)}$, and a function $\kappa\in L^\infty(\Omega\setminus A_0)^+$ representing the toughness of adhesion of the film to the substrate. The energy dissipated during the debonding process growing from a debonded configuration $A$ to $B$ is thus modelled by the integral term
\begin{equation}
    \int_{B\setminus A}\kappa\, dx.
\end{equation}

Since the portion of film initially debonded is already encoded in the model through $A_0$, without loss of generality we may assume that the toughness $\kappa$ is positive on $\Omega\setminus A_0$. For the same reason, when needed, we will consider $\kappa$ as a function defined on the whole of $\Omega$, simply setting $\kappa:= 0$ on $A_0$.

In view of the notion of energetic solution we are going to introduce (see~\cite{MielRoubbook} for a general dissertation), it is convenient to require the initial set $A_0$ to be globally stable with respect to the energy $\mathcal E$ and the previously introduced dissipation, meaning that
\begin{equation}\label{eq:A0stable}
    \mathcal E(0,A_0)\le \mathcal E(0,B)+\int_{B\setminus A_0}\kappa\, dx,\qquad\text{for any }B\in \mathcal M_{\Gamma,w(0)} \text{ such that } A_0\subseteq B.
\end{equation}
\begin{remark}
    If $w(0)=0$ on $\Gamma$, we may even consider $A_0$ to be the empty set, namely at the initial time the film is completely glued to the substrate. Indeed, the identically $0$ function belongs to $H^1_{\Gamma,0}(\Omega,\emptyset)$.
    
    Moreover, we observe that in the case $w(0)=0$ on $\Gamma$, any set $A_0\in \mathcal M(\Omega)$ is stable in the sense of \eqref{eq:A0stable}, since $\mathcal E(0,A_0)=0$.
\end{remark}

The following notion of solution for the debonding model is based on three crucial considerations: the evolution of the sets is irreversible, indeed the debonded region can only increase; the process happens through minima of the total energy of the system, namely the state is globally stable at every time; along the evolution a balance between the free energy, the energy dissipated by debonding, and the energy inserted in the system via the work of the prescribed displacement $w$ is preserved.

\begin{defi}\label{defi:SES}
    Under the previous assumptions, we say that a set-valued map $[0,T]\ni t\mapsto A(t)$ is a \emph{shape energetic solution} (SES) of the debonding model if:
    \begin{itemize}[leftmargin = !, labelwidth=1.2cm, align = left]
        \item [{\crtcrossreflabel{\textup{(CO)}$_\text{S}$}[def:COS]}] $A(t)\in \mathcal O_{\Gamma,w(t)}$ for all $t\in [0,T]$;
        \item[{\crtcrossreflabel{\textup{(ID)}$_\text{S}$}[def:IDS]}] $A(0)=A_0$;
        \item[{\crtcrossreflabel{\textup{(IR)}$_\text{S}$}[def:IRS]}] $A(s)\subseteq A(t)$ for all $0\le s\le t\le T$;
        \item[{\crtcrossreflabel{\textup{(GS)}$_\text{S}$}[def:GSS]}] for all $t\in [0,T]$ there holds
        \begin{equation}
            \mathcal E(t,A(t))\le \mathcal E(t,B)+\int_{B\setminus A(t)}\kappa\, dx,\quad \text{for all $B\in \mathcal M_{\Gamma, w(t)}$ such that $A(t)\subseteq B$};
        \end{equation}
    \item[{\crtcrossreflabel{\textup{(EB)}$_\text{S}$}[def:EBS]}] $A(t)\in \mathcal O_{\Gamma,\dot w(t)}$ for a.e. $t\in [0,T]$, the map $\displaystyle t\mapsto \int_\Omega \nabla \mathfrak{h}_{A(t),\dot w(t)}\cdot \nabla \mathfrak{h}_{A(t),w(t)} \, dx$ belongs to $L^1(0,T)$ and for all $t\in [0,T]$ there holds
        \begin{equation}\label{eq:EB}
            \mathcal E(t,A(t))+\int_{A(t)\setminus A_0}\kappa\,d x=\mathcal E(0,A_0)+\int_0^t  \int_\Omega \nabla \mathfrak{h}_{A(\tau),\dot w(\tau)}\cdot \nabla \mathfrak{h}_{A(\tau),w(\tau)} \, dx\, d\tau.
        \end{equation}
    \end{itemize}
\end{defi}

\begin{remark}
    The last integral term in the energy balance \eqref{eq:EB} represents the work done by the external displacement $w$. Indeed, in the classical framework, it is given by the expression
    \begin{equation}
        \int_0^t  \int_\Gamma \dot w(\tau)\partial_n \mathfrak{h}_{A(\tau),w(\tau)} \, d\mathcal H^{d-1}\, d\tau,
    \end{equation}
    which by integration by parts formally coincides to the integral in \eqref{eq:EB} (exploiting the fact that the normal derivative $\partial_n \mathfrak{h}_{A(\tau),w(\tau)}$ necessarily vanishes in $\partial\Omega\setminus \Gamma$).

    We also stress that the more common expression 
    \begin{equation}\label{eq:workwrong}
        \int_0^t  \int_\Omega \nabla \dot w(\tau)\cdot \nabla \mathfrak{h}_{A(\tau),w(\tau)} \, dx\, d\tau \,,
    \end{equation}
    widely used for instance in fracture mechanics~\cite{DalMasoToader}, is clearly not always correct in our moving domain context. Indeed, consider $w(t)=t$ and $A_0=\emptyset$, and let $t\mapsto A(t)$ be an evolution satisfying the energy balance \eqref{eq:EB} with \eqref{eq:workwrong} as external work. Then it must hold
    \begin{equation}
        \mathcal E(t,A(t))+\int_{A(t)}\kappa\,d x=0\quad\text{for all }t\in [0,T],
    \end{equation}
    namely both terms above vanish. This yields that $A(t)=\emptyset$ and so that $\mathfrak{h}_{A(t),t}$ is identically $0$, which of course contradicts the boundary conditions $\mathfrak{h}_{A(t),t}=t$ on $\Gamma$. On the other hand, if one knew that $\dot w(t)=0$ outside $A(t)$ (see for instance \eqref{eq:safeassumption}), then the Euler-Lagrange equation \eqref{eq:euler_lagrange_dirichlet} would grant the equivalence between \eqref{eq:workwrong} and the last term of \eqref{eq:EB}.    
\end{remark}

Motivated by the expression \eqref{eq:energy}, we now reformulate the model in terms of displacements rather than sets, obtaining an evolutive version of the one-phase Bernoulli free boundary problem. We will focus on this formulation in the rest of the paper.

\begin{defi}\label{defi:DES}
    Under the previous assumptions, we say that a map $[0,T]\ni t\mapsto u(t)$ is a \emph{displacement energetic solution} (DES) of the debonding model if, setting
    \begin{equation}\label{eq:Au}
        A_{u(t)}:=A_0\cup\bigcup\limits_{s\in [0,t]}\{u(s)>0\},
    \end{equation}
    the following conditions hold:
    \begin{enumerate}[leftmargin = !, labelwidth=1cm, align = left]
        \item [{\crtcrossreflabel{\textup{(CO)}}[def:CO]}] $u(t)\in H^1_{\Gamma,w(t)}(\Omega)\cap C^0(\Omega)$ for all $t\in [0,T]$;
        \item[{\crtcrossreflabel{\textup{(ID)}}[def:ID]}] $u(0)=\mathfrak{h}_{A_0,w(0)}$;
        \item[{\crtcrossreflabel{\textup{(GS)}}[def:GS]}] for all $t\in [0,T]$ there holds
        \begin{equation}
            \frac 12 \int_\Omega |\nabla u(t)|^2 dx\le \frac 12 \int_\Omega |\nabla v|^2 dx+\int_{\{v>0\}\setminus A_{u(t)}}\kappa\, dx,\quad \text{for all }v\in H^1_{\Gamma,w(t)}(\Omega);
        \end{equation}
    \item[{\crtcrossreflabel{\textup{(EB)}}[def:EB]}] $A_{u(t)}\in \mathcal O_{\Gamma,\dot w(t)}$ for a.e. $t\in [0,T]$, the map $\displaystyle t\mapsto \int_\Omega \nabla \mathfrak{h}_{A_{u(t)},\dot w(t)}\cdot \nabla u(t) \, dx$ belongs to $L^1(0,T)$ and for all $t\in [0,T]$ there holds
        \begin{equation}\label{eq:EB2}
            \frac 12 \int_\Omega |\nabla u(t)|^2 dx+\int_{A_{u(t)}\setminus A_0}\kappa\,d x=   \frac 12 \int_\Omega |\nabla u(0)|^2 dx+\int_0^t \int_\Omega \nabla \mathfrak{h}_{A_{u(\tau)},\dot w(\tau)}\cdot \nabla u(\tau) \, dx\, d\tau.
        \end{equation}
    \end{enumerate}
\end{defi}

We now show that the two previous definitions of solution are somehow equivalent (see also \Cref{lemma:safe}). Anyway, as stated in the next proposition, notice that the notion of displacement energetic solution, which we will prove existence of, is always stronger than the notion of shape energetic solution.

\begin{prop}\label{prop: almostEQ}
    \Cref{defi:SES,defi:DES} are almost equivalent, in the following sense: 
    \begin{itemize}
        \item[(a)] if $t\mapsto u(t)$ is a DES, then $t\mapsto A_{u(t)}$ is a SES;
        \item[(b)] if $t\mapsto A(t)$ is a SES, then $t\mapsto \mathfrak{h}_{A(t),w(t)}$ fulfils \ref{def:CO}, \ref{def:ID} and \ref{def:GS}. If in addition the inclusion
        \begin{equation}\label{eq:hypinclusion}
            A(t)\setminus A_0\subseteq \bigcup_{s\in[0,t]}\{\mathfrak{h}_{A(s),w(s)}>0\}\setminus A_0,
        \end{equation}
        is in force, then also \ref{def:EB} holds true, hence $t\mapsto \mathfrak{h}_{A(t),w(t)}$ is a DES.
    \end{itemize}     
\end{prop}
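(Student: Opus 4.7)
The plan is to establish the two implications by cross-checking the axioms of SES against those of DES, leveraging the equivalent characterizations of Alt–Caffarelli minimisers collected in \Cref{lemma:equivalent} together with the uniqueness of the Dirichlet minimiser in \Cref{lemma:minimiser_dirichlet}.

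For part (a), I would first observe that $A_{u(t)}$ is open as a union of open positivity sets, contains $A_0$, is nondecreasing in $t$ by construction, and satisfies $A_{u(0)}=A_0$ because $\{u(0)>0\}=\{\mathfrak{h}_{A_0,w(0)}>0\}\subseteq A_0$ by \Cref{lemma:minimiser_dirichlet}; this takes care of \ref{def:COS}, \ref{def:IDS} and \ref{def:IRS}. The crucial step is the identification $u(t)=\mathfrak{h}_{A_{u(t)},w(t)}$: since $u(t)$ lies in $H^1_{\Gamma,w(t)}(\Omega,A_{u(t)})$ by construction, plugging $v=\mathfrak{h}_{A_{u(t)},w(t)}$ (whose positivity set is contained in $A_{u(t)}$, so the $\kappa$-term vanishes) into \ref{def:GS} yields the reverse Dirichlet inequality, and uniqueness forces equality. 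With this at hand, \ref{def:GSS} follows by testing \ref{def:GS} against $v=\mathfrak{h}_{B,w(t)}$ for any admissible $B\supseteq A_{u(t)}$, exploiting $\{\mathfrak{h}_{B,w(t)}>0\}\setminus A_{u(t)}\subseteq B\setminus A_{u(t)}$, and \ref{def:EBS} is then immediate by rewriting both free energies through $u(t)$ and $u(0)$.

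For part (b), I would set $u(t):=\mathfrak{h}_{A(t),w(t)}$ and verify the DES axioms one by one. The key step is \ref{def:GS}: by item~\eqref{item:equiv4} of \Cref{lemma:equivalent} it suffices to test the inequality against non-negative continuous competitors $v\in H^1_{\Gamma,w(t)}(\Omega)^+$ satisfying $A(t)\subseteq\{v>0\}$; for any such $v$, the open set $B:=\{v>0\}$ belongs to $\mathcal M_{\Gamma,w(t)}$ and contains $A(t)$, so \ref{def:GSS} applied to $B$, combined with $\mathcal E(t,B)\le \tfrac 12\int_\Omega|\nabla v|^2\,dx$, produces the required inequality \emph{with $A(t)$ in place of $A_{u(t)}$}. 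Since $\{u(s)>0\}\subseteq A(s)\subseteq A(t)$ for every $s\le t$, we have $A_{u(t)}\subseteq A(t)$, and enlarging the $\kappa$-domain by replacing $A(t)$ with $A_{u(t)}$ preserves the inequality. The regularity in \ref{def:CO} then follows from \Cref{lemma:minimiser_AltCaffarelli} applied to $u(t)$ viewed (through \Cref{lemma:equivalent}) as a minimiser of $\mathcal{AC}(\,\cdot\,,A(t))$, and \ref{def:ID} is tautological.

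The delicate point, which I expect to be the main obstacle, is the energy balance \ref{def:EB} under the extra hypothesis \eqref{eq:hypinclusion}. The inclusion $A_{u(t)}\subseteq A(t)$ derived above and the converse supplied by \eqref{eq:hypinclusion} together yield $A(t)=A_{u(t)}$ as elements of $\mathcal M(\Omega)$, so the dissipation integrals in \ref{def:EBS} and \ref{def:EB} coincide; moreover, the equality of admissible spaces combined with the uniqueness in \Cref{lemma:minimiser_dirichlet} gives $\mathfrak{h}_{A(\tau),\dot w(\tau)}=\mathfrak{h}_{A_{u(\tau)},\dot w(\tau)}$ for a.e.\ $\tau$, so the work terms agree as well. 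Without \eqref{eq:hypinclusion}, portions of $A(t)$ on which $u(t)$ vanishes identically (a kind of ``useless'' debonding) could make $A_{u(t)}$ strictly smaller than $A(t)$, and the two energy balances would genuinely differ, which is precisely why the hypothesis is needed.
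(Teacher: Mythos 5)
Your proposal is correct and follows essentially the same route as the paper: in (a) you identify $u(t)=\mathfrak{h}_{A_{u(t)},w(t)}$ and then verify \ref{def:GSS} by testing \ref{def:GS} against Dirichlet minimisers, while in (b) you apply \ref{def:GSS} to $B:=\{v>0\}$ (the paper uses $B:=A(t)\cup\{v>0\}$ for arbitrary $v$, bypassing the reduction via item~\eqref{item:equiv4}, but the two are interchangeable), and the handling of \eqref{eq:hypinclusion} for \ref{def:EB} is the same. One small slip in (a): the claim that $u(t)\in H^1_{\Gamma,w(t)}(\Omega,A_{u(t)})$ holds ``by construction'' is not quite right, since $A_{u(t)}$ only collects the \emph{positivity} sets $\{u(s)>0\}$ and \ref{def:CO} alone does not exclude that $u(t)$ be negative somewhere; one must first observe, as the paper does, that \ref{def:GS} makes $u(t)$ a minimiser of $\mathcal{AC}(\cdot,A_{u(t)})$, whence $u(t)\ge 0$ by \Cref{lemma:minimiser_AltCaffarelli}.
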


\begin{proof}
    Assume that $t\mapsto u(t)$ is a DES and set $A(t):=A_{u(t)}$. The irreversibility condition \ref{def:IRS} is directly satisfied by the explicit form \eqref{eq:Au} of $A_{u(t)}$. Moreover, since by \Cref{lemma:minimiser_dirichlet} we know that $\{\mathfrak{h}_{A_0,w(0)}>0\}\subseteq A_0$, we easily deduce \ref{def:IDS}.

    By combining \ref{def:GS} and \Cref{lemma:minimiser_AltCaffarelli} we observe that $u(t)\in H^1_{\Gamma,w(t)}(\Omega,A(t))$, and so also \ref{def:COS} is satisfied. By combining \ref{def:GS} and \Cref{lemma:equivalent} instead, we now deduce that $u(t)=\mathfrak{h}_{A(t),w(t)}$. Thus, by \eqref{eq:energy}, the energy balance \ref{def:EBS} is just a rewriting of \ref{def:EB}.

    We just need to show the validity of \ref{def:GSS}. Let us fix $B\in \mathcal M_{\Gamma,w(t)}$ such that $A(t)\subseteq B$ and set $v:=\mathfrak{h}_{B,w(t)}$, so that in particular $\{v>0\}\subseteq B$. By \ref{def:GS} we then infer
    \begin{equation}
        \mathcal E(t,A(t))=\frac 12 \int_\Omega |\nabla u(t)|^2 \, dx\le \frac 12 \int_\Omega |\nabla v|^2 \, dx+\int_{\{v>0\}\setminus A(t)}\kappa\, dx\le \mathcal E(t,B)+\int_{B\setminus A(t)}\kappa\, dx,
    \end{equation}
    and so $(a)$ is proved.

    Let the map $t\mapsto A(t)$ be a SES and set $u(t):= \mathfrak{h}_{A(t),w(t)}$, so that \ref{def:ID} is automatically satisfied. Also observe that by \ref{def:IRS} one has
     \begin{equation}\label{eq:contained}
         A_{u(t)}=A_0\cup \bigcup_{s\in [0,t]}\{\mathfrak{h}_{A(s),w(s)}>0\}\subseteq A_0\cup \bigcup_{s\in [0,t]} A(s)=A(t),\qquad\text{for all }t\in [0,T]. 
     \end{equation}

    We now prove \ref{def:GS}, which by \Cref{lemma:equivalent} directly yields also \ref{def:CO}. Let $v\in H^1_{\Gamma,w(t)}(\Omega)$ and set $B:=A(t)\cup \{v>0\}$, which belongs to $\mathcal M_{\Gamma,w(t)}$ since by construction $v^+\in H^1_{\Gamma,w(t)}(\Omega,B)$. Noting that $A(t)\subseteq B$, by using \ref{def:GSS} and \eqref{eq:contained} we deduce
    \begin{align}
       \frac 12 \int_\Omega |\nabla u(t)|^2 \, dx&=  \mathcal E(t,A(t))\le \mathcal E(t,B)+\int_{B\setminus A(t)}\kappa\, dx\le \frac 12\int_\Omega |\nabla v^+|^2 \, dx+\int_{\{v>0\}\setminus A(t)}\kappa\, dx\\&
       \le \frac 12\int_\Omega |\nabla v|^2 \, dx+\int_{\{v>0\}\setminus A_{u(t)}}\kappa\, dx,
    \end{align}
    whence \ref{def:GS} holds true.

    Assume now \eqref{eq:hypinclusion}, so that we actually infer equality in \eqref{eq:contained}. Thus, the energy balance \ref{def:EB} is just a rewriting of \ref{def:EBS}, and we conclude.
\end{proof}

Although we believe condition \eqref{eq:hypinclusion} should be always true (whence DES and SES are actually equivalent) due to \ref{def:GSS} and \ref{def:EBS}, we are not able to prove it in the general situation. A sufficient condition which provides its validity (see \Cref{lemma:safe} below) is given by the following assumption, which will play an important role also for the existence of DES:  
\begin{equation}\label{eq:safeassumption}
    \text{there exists $\overline w\in AC([0,T];H^1(\Omega))$ satisfying $\overline w(t)\in H^1_{\Gamma,w(t)}(\Omega,A_0)^+$ for all $t\in[0,T]$.}
\end{equation}
Observe that if the initial debonded region $A_0$ contains a neighbourhood of $\Gamma$, then \eqref{eq:safeassumption} is fulfilled by $\overline w(t,x):=\Phi(x)w(t,x)$, where $\Phi\in C^1(\overline \Omega)$ is a suitable cut-off function satisfying $0\le \Phi\le 1$ in $\overline{\Omega}$, $\Phi=1$ in $\Gamma$, and $\Phi=0$ outside the neighbourhood of $\Gamma$.

For technical reasons, the proof of the following lemma is postponed to \Cref{sec:properties}, just after \Cref{prop:upperineq}.

\begin{lemma}\label{lemma:safe}
    Assuming \eqref{eq:safeassumption}, any SES $t\mapsto A(t)$ satisfies the inclusion \eqref{eq:hypinclusion}. In particular, the two notions of SES and DES are equivalent.
\end{lemma}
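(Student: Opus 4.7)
The plan is to reduce the desired inclusion to an equality of $\kappa$-masses, and then to exploit the Euler--Lagrange equation for the harmonic replacement in order to rewrite the SES energy balance in terms of the smaller set $A_{u(t)}$. Concretely, let $t\mapsto A(t)$ be a SES and set $u(t):=\mathfrak{h}_{A(t),w(t)}$ and $\widetilde A(t):=A_{u(t)}$. By \Cref{prop: almostEQ}(b), $u$ already satisfies \ref{def:CO}, \ref{def:ID} and \ref{def:GS}, and \eqref{eq:contained} yields $\widetilde A(t)\subseteq A(t)$; since $\kappa$ is strictly positive on $\Omega\setminus A_0$ by the standing convention, the conclusion \eqref{eq:hypinclusion} amounts to showing the numerical identity $\int_{A(t)\setminus A_0}\kappa\,dx=\int_{\widetilde A(t)\setminus A_0}\kappa\,dx$.

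First I would exploit \eqref{eq:safeassumption} to ensure $\widetilde A(\tau)\in\mathcal O_{\Gamma,\dot w(\tau)}$ a.e.: the absolute continuity of $\overline w$ combined with the pointwise identity $\overline w(\tau)=0$ on $\Omega\setminus A_0$ yields $\dot{\overline w}(\tau)\in H^1_{\Gamma,\dot w(\tau)}(\Omega,A_0)\subseteq H^1_{\Gamma,\dot w(\tau)}(\Omega,\widetilde A(\tau))$ for a.e.\ $\tau\in[0,T]$. Setting $h(\tau):=\mathfrak{h}_{A(\tau),\dot w(\tau)}$ and $\widetilde h(\tau):=\mathfrak{h}_{\widetilde A(\tau),\dot w(\tau)}$, the difference $h(\tau)-\widetilde h(\tau)$ has null trace on $\Gamma$ and vanishes on $\Omega\setminus A(\tau)$ (since on that set both summands are zero by $\Omega\setminus A(\tau)\subseteq\Omega\setminus\widetilde A(\tau)$), hence it belongs to $H^1_{\Gamma,0}(\Omega,A(\tau))$. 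Testing \eqref{eq:euler_lagrange_dirichlet} for $u(\tau)=\mathfrak{h}_{A(\tau),w(\tau)}$ against this difference then produces the crucial identity
\[
\int_\Omega \nabla \mathfrak{h}_{A(\tau),\dot w(\tau)}\cdot\nabla u(\tau)\,dx=\int_\Omega \nabla \mathfrak{h}_{\widetilde A(\tau),\dot w(\tau)}\cdot\nabla u(\tau)\,dx\qquad\text{for a.e.\ }\tau\in[0,T].
\]

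Inserting this identity into \ref{def:EBS} (and recalling $\mathcal E(t,A(t))=\tfrac12\|\nabla u(t)\|^2_{L^2(\Omega)}$) converts the SES energy balance into an equality whose right-hand side depends only on $\widetilde h$. On the other hand, \Cref{prop:upperineq}, applied to the candidate $u$, provides exactly the same right-hand side but with $\widetilde A(t)$ in place of $A(t)$ on the left and a $\leq$. Subtracting the two relations yields $\int_{A(t)\setminus A_0}\kappa\,dx\le \int_{\widetilde A(t)\setminus A_0}\kappa\,dx$, which combined with $\widetilde A(t)\subseteq A(t)$ forces equality and therefore \eqref{eq:hypinclusion}; the equivalence of SES and DES is then immediate from \Cref{prop: almostEQ}(b).

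The main delicate point I anticipate lies in verifying that \Cref{prop:upperineq} is genuinely applicable to $u$: besides \ref{def:GS}, one typically needs $\widetilde A(\tau)\in\mathcal O_{\Gamma,\dot w(\tau)}$ a.e.\ and the $L^1$-integrability of $\tau\mapsto\int_\Omega \nabla\widetilde h(\tau)\cdot\nabla u(\tau)\,dx$. Both ingredients are secured by the safe extension $\overline w$ granted by \eqref{eq:safeassumption}; once these admissibility conditions are in place, the remaining argument is a mechanical comparison between two energy statements sharing the same work term.
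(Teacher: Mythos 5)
Your proposal is correct and follows essentially the same route as the paper's proof: reduce the inclusion to an equality of $\kappa$-masses via $A_{u(t)}\subseteq A(t)$, use the Euler--Lagrange equation \eqref{eq:euler_lagrange_dirichlet} (with the test function $\mathfrak{h}_{A(\tau),\dot w(\tau)}-\mathfrak{h}_{A_{u(\tau)},\dot w(\tau)}\in H^1_{\Gamma,0}(\Omega,A(\tau))$) to identify the two work terms, and then compare \ref{def:EBS} with the upper inequality of Proposition~\ref{prop:upperineq}. All the admissibility checks you flag at the end (namely $A_{u(\tau)}\in\mathcal O_{\Gamma,\dot w(\tau)}$ and measurability/integrability of the work integrand) are handled exactly as in the paper, by the extension $\overline w$ from \eqref{eq:safeassumption} together with the $L^1$-integrability already built into \ref{def:EBS}.
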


We are now in a position to state the main results of the paper, regarding existence of displacement energetic solutions of the debonding model, and so in particular existence of shape energetic solutions. We stress that uniqueness and time-regularity are not expected to hold in general, see the one-dimensional examples of \Cref{sec:1d}. The first theorem needs assumption~\eqref{eq:safeassumption}.

\begin{teo}\label{thm:safe}
    Assume that $w\in AC([0,T];H^1(\Omega))$ satisfies \eqref{eq:boundednessw}, and that the initial debonded region $A_0\in \mathcal O_{\Gamma,w(0)}$ fulfils \eqref{eq:A0stable}. If \eqref{eq:safeassumption} is in force, there exists a DES of the debonding model in the sense of \Cref{defi:DES}. In this case, the work of the prescribed displacement in the energy balance \eqref{eq:EB2} takes the simpler form
        \begin{equation}
        \int_0^t  \int_\Omega \nabla \dot{\overline w}(\tau)\cdot \nabla u(\tau) \, dx\, d\tau.
    \end{equation}
\end{teo}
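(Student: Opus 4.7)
The overall strategy is the classical Minimizing Movements scheme, adapted to the coupled \emph{displacement--debonded region} structure of the problem. Fix $N\in\n$, set $\tau:=T/N$ and $t_k:=k\tau$, and define inductively $A^0:=A_0$, and for $k=1,\dots,N$
\begin{align}
u^k &\in \argmin\bigl\{\mathcal{AC}(v,A^{k-1})\,:\,v\in H^1_{\Gamma,w(t_k)}(\Omega)\bigr\},\label{eq:MMSplan1}\\
A^k &:= A^{k-1}\cup \{u^k>0\}.\label{eq:MMSplan2}
\end{align}
Existence of $u^k$ follows from Lemma \ref{lemma:minimiser_AltCaffarelli}; by construction $\{A^k\}_k$ is non-decreasing and the positivity set of $u^k$ lies in $A^k$, so $u^k\in H^1_{\Gamma,w(t_k)}(\Omega,A^k)$. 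Thanks to Lemma \ref{lemma:equivalent}, each $u^k$ will also be a minimiser of $\mathcal{AC}(\cdot,A^k)$ --- a form of the usual Mielke--Theil conditional stability at discrete times.

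\textbf{A priori estimates.} Assumption \eqref{eq:safeassumption} is crucial here. The function $u^{k-1}+\overline w(t_k)-\overline w(t_{k-1})$ belongs to $H^1_{\Gamma,w(t_k)}(\Omega,A^{k-1})$: indeed $\overline w(t_k)-\overline w(t_{k-1})$ vanishes outside $A_0\subseteq A^{k-1}$, and $u^{k-1}$ vanishes outside $A^{k-1}$. Testing the minimality of $u^k$ against this competitor, expanding the square, and dropping the dissipation term (it is absent because the competitor vanishes outside $A^{k-1}$) yields the discrete energy inequality
\begin{equation}
\mathcal{AC}(u^k,A^{k-1})\le \frac12\int_\Omega |\nabla u^{k-1}|^2\,dx+\int_{t_{k-1}}^{t_k}\!\!\int_\Omega \nabla \dot{\overline w}(\tau)\cdot \nabla u^{k-1}\,dx\,d\tau+o_\tau,
\end{equation}
plus a quadratic remainder controlled by $\|\dot{\overline w}\|_{L^2((t_{k-1},t_k);H^1)}^2$. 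Rewriting $\mathcal{AC}(u^k,A^{k-1})=\frac12\|\nabla u^k\|_{L^2}^2+\int_{A^k\setminus A^{k-1}}\!\kappa\,dx$ and summing over $k$ gives, via Gronwall, uniform bounds on $\|u^k\|_{H^1}$ and on the total dissipation $\sum_k \int_{A^k\setminus A^{k-1}}\kappa\,dx=\int_{A^N\setminus A_0}\kappa\,dx$.

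\textbf{Passage to the limit.} Define piecewise constant interpolants $u_N,A_N$. Monotonicity of $k\mapsto A^k$, together with the uniform bound on the dissipation, allows one to invoke the Helly-type selection quoted by the authors (Proposition \ref{prop:rho}/Definition \eqref{eq:definition_esssupp}) to extract a subsequence $A_N(t)\to A(t)$ for every $t\in[0,T]$, with $A$ non-decreasing. For each such $t$, the uniform $H^1$-bound and Lemma \ref{lemma:minimiser_AltCaffarelli} give a (not relabelled) $u(t)$ such that $u_N(t)\to u(t)$ weakly in $H^1$ and locally uniformly (by the local Lipschitz estimate). Using the stability of each $u^k$ against $A^k$ (cf. Lemma \ref{lemma:equivalent}\eqref{item:equiv4}), one checks that the limit $u(t)$ coincides with $\mathfrak{h}_{A(t),w(t)}$ and hence $A_{u(t)}\subseteq A(t)$; the reverse inclusion, which is actually the content of \eqref{eq:hypinclusion}, will come from the strict inclusion $\{u^k>0\}\subseteq A^k$ carried to the limit. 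Properties \ref{def:CO} and \ref{def:ID} follow at once, while \ref{def:GS} is derived by passing to the limit in the discrete stability: given any competitor $v\in H^1_{\Gamma,w(t)}(\Omega)$, use $v+\overline w(t_{N_t})-\overline w(t)$ as a competitor at the discrete time $t_{N_t}\to t$ and exploit lower semicontinuity of the Dirichlet energy together with the Fatou-type inequality $|\{v>0\}\setminus A_{u(t)}|\le \liminf_N|\{v>0\}\setminus A^{N_t-1}|$ granted by the monotonicity of the $A^k$'s.

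\textbf{Energy balance and main difficulty.} The upper bound in \eqref{eq:EB2} is obtained by telescoping the discrete inequality and passing to the limit via lower semicontinuity on the left-hand side and dominated convergence on the right-hand side (the latter using the simplified work $\int_0^t\int_\Omega \nabla\dot{\overline w}\cdot\nabla u\,dx\,d\tau$, which makes sense thanks to \eqref{eq:safeassumption}); uniform convergence of $u_N$ and monotone convergence for $\int_{A_{u(t)}\setminus A_0}\kappa\,dx$ are the key ingredients. The opposite inequality is the standard consequence of \ref{def:GS} tested, on a fine partition of $[0,t]$, against $u(\tau_k)+\overline w(\tau_{k+1})-\overline w(\tau_k)$, summing and passing to the limit (a Riemann-sum argument of Mielke--Theil type). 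The identification of the work with the simpler form in the statement requires showing that $\int_\Omega \nabla \mathfrak{h}_{A(\tau),\dot w(\tau)}\cdot\nabla u(\tau)\,dx=\int_\Omega \nabla \dot{\overline w}(\tau)\cdot \nabla u(\tau)\,dx$, which follows from the Euler--Lagrange equation \eqref{eq:euler_lagrange_dirichlet} applied to $u(\tau)=\mathfrak h_{A(\tau),w(\tau)}$ with test function $\dot{\overline w}(\tau)-\mathfrak{h}_{A(\tau),\dot w(\tau)}\in H^1_{\Gamma,0}(\Omega,A(\tau))$. The hardest step is the combination of the Helly selection with the fact that the scheme is \emph{asymmetric} --- $u^k$ is minimal against $A^{k-1}$, not against $A^k$ --- so that stability at the limit requires both the continuity of the debonded region from the left (delivered by the essential-support construction) and the absence of energy jumps, making the whole argument a delicate interplay between the discrete algorithm, the outward-minimality property of Alt--Caffarelli, and the irreversibility constraint.
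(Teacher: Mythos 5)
Your proposal follows the same overall route as the paper: Minimizing Movements with the coupled $(u^j_i,A^j_i)$ update, uniform a priori bounds on the discrete solutions, a Helly-type selection of the limiting debonded region (via weak$^*$ limits of indicator functions and the essential-support construction), limit passage in the discrete global stability to identify $u(t)=\mathfrak h_{A(t),w(t)}$, an upper energy estimate by telescoping, and the lower estimate from \Cref{prop:upperineq}. That said, there are several places where your argument as written goes astray.

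First, you do not need Gronwall, and indeed the discrete energy inequality is not the right tool for the uniform $H^1$ bound. The bound $\sup_t\|u^j(t)\|_{H^1(\Omega)}\le C$ follows directly by testing the minimality against the competitor $w(t^j_i)$, exactly as in \Cref{prop:boundDES}; this is cleaner and avoids any iteration.

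Second, the claim that the reverse inclusion $A(t)\subseteq A_{u(t)}$ (i.e. \eqref{eq:hypinclusion}) ``will come from the strict inclusion $\{u^k>0\}\subseteq A^k$ carried to the limit'' is both unnecessary and unjustified. The paper's proof never proves $A(t)\subseteq A_{u(t)}$ and, in fact, carefully avoids needing it: since $A_{u(t)}\subseteq A(t)$ (the easy inclusion), replacing $A_{u(t)}$ by the larger $A(t)$ only \emph{increases} the right-hand side of \ref{def:GS}, so stability against $A(t)$ directly implies stability against $A_{u(t)}$. Similarly, in the energy balance one only needs $\int_{A_{u(t)}\setminus A_0}\kappa\le\int_\Omega\kappa\rho(t)(1-\chi_{A_0})$, again a one-sided inclusion. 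Insisting on the reverse inclusion would force you to prove something that may not hold (and is not proved in the paper, except for abstract SES under \eqref{eq:safeassumption} via \Cref{lemma:safe}, a separate argument).

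Third, and most seriously, your ``Fatou-type inequality'' $|\{v>0\}\setminus A_{u(t)}|\le \liminf_N|\{v>0\}\setminus A^{N_t-1}|$ points in the wrong direction. What the passage to the limit in \ref{def:GS} requires is an upper bound on the limit of $\int_{\{v>0\}\setminus A^j(t-\tau_j)}\kappa\,dx$ in terms of $\int_{\{v>0\}\setminus A_{u(t)}}\kappa\,dx$, not a lower bound on the latter. The correct statement uses the weak$^*$ convergence $\chi_{A^j(t)}\rightharpoonup^*\rho(t)$ and the inequality $\rho(t)\ge\chi_{A(t)}\ge\chi_{A_{u(t)}}$, yielding
\begin{equation}
\lim_j\int_{\{v>0\}\setminus A^j(t-\tau_j)}\kappa\,dx=\int_\Omega\kappa\chi_{\{v>0\}}(1-\rho(t))\,dx\le\int_{\{v>0\}\setminus A_{u(t)}}\kappa\,dx\,.
\end{equation}
The paper handles this step more elegantly by invoking the characterization \Cref{lemma:equivalent}\eqref{item:equiv4}: it suffices to consider competitors $v$ with $A(t)\subseteq\{v>0\}$, in which case adding $\int_{A^j(t-\tau_j)}\kappa$ to both sides of the discrete inequality and using the monotonicity of the $A^j$'s closes the argument cleanly. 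Without that device, you must supply the weak$^*$ convergence reasoning above with the inequality in the correct orientation.

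With these corrections the proposal matches the paper's proof in spirit and in structure; the gaps are genuine (a sign-reversed inequality and a spurious reverse inclusion) but localized, not indicative of a wrong overall strategy.
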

\begin{remark}\label{rmk:nostable}
    If the stability at the initial time \eqref{eq:A0stable} is dropped, the very same proof performed in \Cref{sec:existence} still yields the existence of a map $t\mapsto u(t)$ fulfilling \ref{def:ID}, \ref{def:CO} and \ref{def:GS} just for $t\in (0,T]$, and the following energy inequality
    \begin{equation}
        \frac 12 \int_\Omega |\nabla u(t)|^2 dx+\int_{A_{u(t)}\setminus A_0}\kappa\,d x\le   \frac 12 \int_\Omega |\nabla u(0)|^2 dx+\int_0^t  \int_\Omega \nabla \dot{\overline w}(\tau)\cdot \nabla u(\tau) \, dx\, d\tau
    \end{equation}
    for all $t\in [0,T]$.
\end{remark}

The second result gets rid of assumption \eqref{eq:safeassumption}, thus for instance it may include the situation $A_0=\emptyset$, but in general it just ensures existence of an evolution of globally stable states, without any information on the energy balance \eqref{eq:EB}.
\begin{teo}\label{thm:general}
    Assume that $w\in AC([0,T];H^1(\Omega))$ satisfies \eqref{eq:boundednessw}, and that the initial debonded region $A_0\in \mathcal O_{\Gamma,w(0)}$ fulfils \eqref{eq:A0stable}. Then there exists a function $t\mapsto u(t)$ satisfying \ref{def:CO}, \ref{def:ID} and \ref{def:GS}.   
\end{teo}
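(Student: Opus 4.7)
The plan is to implement a coupled Minimizing Movements scheme in which the displacement and the debonded region are updated separately, extract a limit via a Helly-type selection principle, and verify the three required conditions \ref{def:CO}, \ref{def:ID} and \ref{def:GS}.

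First I would discretize time. For each $N\in\mathbb{N}$, set $\tau:=T/N$, $t^n:=n\tau$, initialize $A^0:=A_0$ and $u^0:=\mathfrak{h}_{A_0,w(0)}$, and for $n\geq 1$ pick --- via \Cref{lemma:minimiser_AltCaffarelli} --- a minimiser $u^n$ of $v\mapsto \mathcal{AC}(v,A^{n-1})$ on $H^1_{\Gamma,w(t^n)}(\Omega)$, then update $A^n:=A^{n-1}\cup\{u^n>0\}$. Each $u^n$ is continuous with $0\le u^n\le M$ by \Cref{lemma:minimiser_AltCaffarelli}, so each $A^n$ is open, non-decreasing in $n$, belongs to $\mathcal{O}_{\Gamma,w(t^n)}$, and contains $\{u^n>0\}$; hence $u^n=\mathfrak{h}_{A^n,w(t^n)}$ by uniqueness in \Cref{lemma:minimiser_dirichlet}. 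The minimality translates, through \Cref{lemma:equivalent}, into the discrete global stability
\[
\tfrac12\int_\Omega|\nabla u^n|^2\,dx\leq \tfrac12\int_\Omega|\nabla v|^2\,dx+\int_{\{v>0\}\setminus A^{n-1}}\kappa\,dx,\quad\forall\,v\in H^1_{\Gamma,w(t^n)}(\Omega).
\]
Testing with $v=u^{n-1}+(w(t^n)-w(t^{n-1}))$, telescoping, and using the absolute continuity of $w$ yields uniform bounds on the Dirichlet energy and on the discrete dissipation $\int_{A^n\setminus A_0}\kappa\,dx$. Denote by $u_N(t)$ and $A_N(t)$ the piecewise-constant right-continuous interpolants.

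Since $A_N(\cdot)$ is non-decreasing in $t$ with uniformly bounded $\kappa$-dissipation, the Helly-type selection principle of \Cref{prop:rho} produces, up to a subsequence, a non-decreasing map $t\mapsto A(t)$ with $A(0)=A_0$ and $A_N(t)\to A(t)$ in the measure-theoretic sense of \eqref{eq:definition_esssupp}. I would define $u(t):=\mathfrak{h}_{A(t),w(t)}$, so that \ref{def:ID} is automatic, and verify $u_N(t)\rightharpoonup u(t)$ weakly in $H^1(\Omega)$ via a stability argument for Dirichlet minimisers under monotone convergence of admissible sets. To verify \ref{def:GS}, fix $t\in[0,T]$ and $v\in H^1_{\Gamma,w(t)}(\Omega)$, pick $t^{n(N)}\downarrow t$, and use the discrete competitor $v_N:=v+w(t^{n(N)})-w(t)\in H^1_{\Gamma,w(t^{n(N)})}(\Omega)$, which converges strongly to $v$ in $H^1(\Omega)$ by absolute continuity of $w$. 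Passing to the $\liminf$ in the discrete stability --- combining weak lower semi-continuity of the Dirichlet energy on the left with lower semi-continuity of the $\kappa$-penalty on the right, coming from the pointwise convergence of the characteristic functions of $A_N(t)$ --- yields
\[
\tfrac12\int_\Omega|\nabla u(t)|^2\,dx\leq \tfrac12\int_\Omega|\nabla v|^2\,dx+\int_{\{v>0\}\setminus A(t)}\kappa\,dx.
\]
Since each $u(s)$ vanishes outside $A(s)\subseteq A(t)$ for every $s\le t$, one has $A_{u(t)}\subseteq A(t)$, so replacing $A(t)$ by $A_{u(t)}$ on the right only enlarges the penalty and \ref{def:GS} follows. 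The continuity \ref{def:CO} is then obtained by applying \Cref{lemma:equivalent} and \Cref{lemma:minimiser_AltCaffarelli} to $u(t)$, which the established global stability identifies as an Alt--Caffarelli minimiser.

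The main technical obstacle is precisely this limit passage. Because the scheme separates the variables $u$ and $A$ --- as alluded to in the introduction with the reference to \eqref{eq:MMS1}--\eqref{eq:MMS2} --- strong $H^1$-compactness of $u_N(t)$ and Hausdorff-type compactness of $A_N(t)$ are not available. A careful interplay between the Helly principle \Cref{prop:rho} and the measure-theoretic essential-supremum construction \eqref{eq:definition_esssupp} is what makes both the identification $\mathfrak{h}_{A_N(t),w(t^n)}\to\mathfrak{h}_{A(t),w(t)}$ and the lower semi-continuity of the $\kappa$-penalty rigorous. A subordinate but non-trivial point is the construction of admissible recovery sequences $v_N$ compatible with the moving boundary datum $w(t^n)$, which is exactly the role played by the absolute continuity assumption on $w$.
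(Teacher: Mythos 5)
There is a genuine gap, and it sits exactly at the point where the paper's two existence results diverge. Your scheme is, in substance, the paper's proof of \Cref{thm:safe}, but that proof needs assumption \eqref{eq:safeassumption}, which \Cref{thm:general} does not grant, and your proposal supplies no substitute. The failing step is the recovery competitor $v_N:=v+w(t^{n(N)})-w(t)$ (and likewise the telescoping competitor $u^{n-1}+w(t^n)-w(t^{n-1})$): the Dirichlet part indeed converges, but the penalty $\int_{\{v_N>0\}\setminus A_N}\kappa\,dx$ is not controlled by $\int_{\{v>0\}\setminus A_N}\kappa\,dx$ up to a vanishing error, because the increment $w(t^{n(N)})-w(t)$, though small in $H^1(\Omega)$, may be strictly positive on a set of large measure where $v=0$; the measure of a positivity set is not continuous under $H^1$-small perturbations, so $|\{v_N>0\}\setminus\{v>0\}|$ need not vanish. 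What you need is a $\limsup$ bound on the penalty along the recovery sequence (upper, not lower, semicontinuity), and that is precisely what breaks. In the paper's proof of \Cref{thm:safe} the datum is adjusted by $\overline w^{j}(t)-\overline w(t)$ with $\overline w$ from \eqref{eq:safeassumption}: since $\overline w$ vanishes outside $A_0\subseteq A^{j}(t-\tau_j)$, the competitor's positivity set changes only inside the already debonded region and the penalty is untouched; the same device makes the telescoping estimate work (there the extra penalty is exactly zero), while with plain $w$ it does not.

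For this reason the paper proves \Cref{thm:general} by a different route: fatten the initial set, $A_0^\varepsilon:=A_0\cup\Gamma^\varepsilon$, replace $w$ by $w^\varepsilon=\Phi^\varepsilon w$ with a cut-off supported in $\Gamma^\varepsilon$ so that \eqref{eq:safeassumption} holds for the approximate problem, apply \Cref{thm:safe} together with \Cref{rmk:nostable} (since $A_0^\varepsilon$ need not be stable), and then pass to a second limit $\varepsilon\to 0$ with another Helly-type argument (\Cref{teo:convergence_ueps}). A related soft spot in your outline: you define $u(t):=\mathfrak{h}_{A(t),w(t)}$ and assert $u_N(t)\rightharpoonup u(t)$ by ``stability of Dirichlet minimisers under monotone convergence of admissible sets'', but $A_N(t)$ converges only weakly$^*$ through its characteristic functions to a density $\rho(t)$ that need not be a characteristic function; in the paper the identification of the limit with $\mathfrak{h}_{A(t),w(t)}$ is obtained only \emph{after} the stability passage, via condition \eqref{item:equiv4} of \Cref{lemma:equivalent}, which again requires admissible recovery competitors of the kind you are missing. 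If you wanted to salvage a one-step scheme you would have to localize the boundary-datum correction near $\Gamma$ with a cut-off at scale $\delta$ and perform an extra $\delta\to 0$ limit --- which is, in essence, the paper's two-parameter construction.
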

The proof will be performed by approximating the initial debonded region fattening the set $A_0$. For $\varepsilon>0$ we consider the tubular neighbourhood $\Gamma^\varepsilon:=\{x\in \Omega :\, \operatorname{dist}(x;\Gamma)<\varepsilon\}$, and we set $A_0^\varepsilon:= A_0\cup \Gamma^\varepsilon$. Since $\Gamma^\varepsilon$ is open, we may construct a function $w^\varepsilon$ satisfying \eqref{eq:safeassumption} by multiplying $w$ with a suitable cut-off function.
Relying on \Cref{thm:safe}, we thus construct solutions $u^\varepsilon$ with boundary values $w^\varepsilon$ and initial debonded region $A^\varepsilon_0$, and we then show that as $\varepsilon\to 0$ we recover an evolution fulfilling the statement of \Cref{thm:general}. In addition, whenever a uniform global Lipschitz bound of the sequence $u^\varepsilon$ holds true, and under further regularity of the boundary datum, in the limit we actually retrieve a DES, as stated in the following proposition.
\begin{prop}
    \label{thm:conditional}Assume that $w\in AC([0,T];W^{2,p}(\Omega))$ with $p>d$ satisfies \eqref{eq:boundednessw}, and that the initial debonded region $A_0\in \mathcal O_{\Gamma,w(0)}$ fulfils \eqref{eq:A0stable}. Moreover, suppose that $\min\limits_\Gamma w(t)>0$ for a.e. $t\in [0,T]$. If the following uniform bound holds true
    \begin{equation}\label{eq:conditionalbound}
        \esssup\limits_{t\in [0,T]}\|u^\varepsilon(t)\|_{C^{0,1}(\Omega)}\le C,
    \end{equation}
    then there exists a DES of the debonding model in the sense of \Cref{defi:DES}. 
\end{prop}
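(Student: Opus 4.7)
The plan is to obtain $u$ as the $\varepsilon\to 0$ limit of the DES $u^\varepsilon$ provided by \Cref{thm:safe} for the fattened problem with initial set $A_0^\varepsilon:=A_0\cup\Gamma^\varepsilon$ and auxiliary function $\overline w^\varepsilon$ witnessing \eqref{eq:safeassumption} (obtained by multiplying $w$ with a cut-off equal to one on $\Gamma$ and supported in $\Gamma^\varepsilon$). Since \Cref{thm:general} already yields a limit $u$ fulfilling \ref{def:CO}, \ref{def:ID}, \ref{def:GS} together with the energy inequality of \Cref{rmk:nostable}, the task is to promote that inequality to the full energy balance \ref{def:EB} by exploiting the enhanced assumptions.

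\textbf{Compactness.} From \eqref{eq:conditionalbound} and the bound $0\le u^\varepsilon(t)\le M$ of \Cref{lemma:minimiser_AltCaffarelli}, the family $\{u^\varepsilon(t)\}_{t,\varepsilon}$ is equibounded and equicontinuous on $\overline\Omega$; combined with the time-monotonicity of $A_{u^\varepsilon(t)}$, a Helly-type selection in the vein of \Cref{prop:rho} provides, along a subsequence, a limit $t\mapsto u(t)$ with $u^\varepsilon(t)\to u(t)$ uniformly on $\overline\Omega$ and $\nabla u^\varepsilon(t)\rightharpoonup\nabla u(t)$ weakly in $L^2$ for every $t\in[0,T]$. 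Conditions \ref{def:CO}, \ref{def:ID}, \ref{def:GS} are inherited from $u^\varepsilon$ as in the proof of \Cref{thm:general}.

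\textbf{Upper energy inequality.} The energy balance of $u^\varepsilon$ carries the simplified work $\int_\Omega\nabla\dot{\overline w}^\varepsilon(\tau)\cdot\nabla u^\varepsilon(\tau)\,dx$, which is not directly amenable to a limit passage since the support of $\overline w^\varepsilon(\tau)$ collapses onto $\Gamma$. The crucial preliminary step is thus to recast the work in the intrinsic form of \eqref{eq:EB2} by means of the Euler--Lagrange equation \eqref{eq:euler_lagrange_dirichlet} for $u^\varepsilon(\tau)$: since $\dot{\overline w}^\varepsilon(\tau)-\mathfrak{h}_{A_{u^\varepsilon(\tau)},\dot w(\tau)}\in H^1_{\Gamma,0}(\Omega,A_{u^\varepsilon(\tau)})$ (both functions share the trace $\dot w(\tau)$ on $\Gamma$ and vanish outside $A_{u^\varepsilon(\tau)}$, using $A_0^\varepsilon\subseteq A_{u^\varepsilon(\tau)}$), we obtain
\begin{equation*}
\int_\Omega\nabla\dot{\overline w}^\varepsilon(\tau)\cdot\nabla u^\varepsilon(\tau)\,dx=\int_\Omega\nabla\mathfrak{h}_{A_{u^\varepsilon(\tau)},\dot w(\tau)}\cdot\nabla u^\varepsilon(\tau)\,dx,
\end{equation*}
the harmonic extension on the right being well defined thanks to $\min_\Gamma w(\tau)>0$. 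Taking liminf on the LHS and lim on the RHS: weak $L^2$-lower semicontinuity controls the Dirichlet term; the dissipation passes by Fatou using uniform convergence of $u^\varepsilon$ together with $|A_0^\varepsilon\setminus A_0|=|\Gamma^\varepsilon|\to 0$; the initial energy converges via $\mathfrak{h}_{A_0^\varepsilon,w(0)}\to\mathfrak{h}_{A_0,w(0)}$ in $H^1$; the intrinsic work term passes by dominated convergence in $\tau$, with the $W^{2,p}$ regularity ($p>d$) of $w$ supplying the $\tau$-integrable dominant and the stability of the Dirichlet problem along the growing domains $A_{u^\varepsilon(\tau)}$ providing the pointwise convergence in $\tau$.

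\textbf{Lower inequality, conclusion and main obstacle.} The reverse inequality is a classical consequence of global stability via the Mielke--Theil Riemann-sum technique~\cite{MielRoubbook}: one tests \ref{def:GS} at $t_{k-1}$ against the competitor $\mathfrak{h}_{A_{u(t_k)},w(t_{k-1})}$, telescopes over a partition $0=t_0<\ldots<t_N=t$, and refines the mesh; the regularity $w\in W^{2,p}$ and the positivity $\min_\Gamma w(\tau)>0$ together ensure that the limit Riemann sum coincides with the intrinsic work integral. Combined with the upper inequality, this yields \eqref{eq:EB2}, so that $u$ is a DES. The main obstacle is the convergence of the intrinsic work term, which amounts to a continuity statement for the harmonic-extension operator $A\mapsto\mathfrak{h}_{A,\eta}$ along a family of growing sets whose boundaries are a priori irregular; this is precisely where the enhanced regularity of $w$ and the Lipschitz bound \eqref{eq:conditionalbound}, through the compactness of the sequence $u^\varepsilon$ in $H^1$, play their essential role.
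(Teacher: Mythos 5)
Your overall strategy — pass to the limit in the approximations $u^\varepsilon$ built via \Cref{thm:safe} from the fattened data $A_0^\varepsilon$, $w^\varepsilon$, then upgrade the inherited energy inequality of \Cref{rmk:nostable} to a balance — coincides with the paper's, and your preliminary observation that the simplified work $\int_\Omega\nabla\dot{\overline w}^\varepsilon(\tau)\cdot\nabla u^\varepsilon(\tau)\,dx$ equals the intrinsic form $\int_\Omega\nabla\mathfrak{h}_{A_{u^\varepsilon(\tau)},\dot w(\tau)}\cdot\nabla u^\varepsilon(\tau)\,dx$ via the Euler--Lagrange equation \eqref{eq:euler_lagrange_dirichlet} is correct. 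However, there is a genuine gap precisely at the step you yourself flag: you claim that the pointwise-in-$\tau$ convergence $\int_\Omega\nabla\mathfrak{h}_{A_{u^\varepsilon(\tau)},\dot w(\tau)}\cdot\nabla u^\varepsilon(\tau)\,dx\to\int_\Omega\nabla\mathfrak{h}_{A_{u(\tau)},\dot w(\tau)}\cdot\nabla u(\tau)\,dx$ follows from ``stability of the Dirichlet problem along the growing domains,'' but no such continuity of the map $A\mapsto\mathfrak{h}_{A,\eta}$ is available for rough sets, and $A_{u^\varepsilon(\tau)}$ need not be monotone or converge nicely to $A_{u(\tau)}$. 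This is not just a technicality you can defer: it is the crux of the argument, and the extra hypotheses ($\min_\Gamma w>0$, global Lipschitz bound) do not directly yield it in the form you invoke.

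The missing idea is to avoid any continuity of the harmonic-extension operator by inserting a \emph{fixed} inner set. Since for a.e.\ $\tau$ the limit $u(\tau)$ is (globally) Lipschitz and equals $w(\tau)\ge 2\delta_\tau:=\min_\Gamma w(\tau)>0$ on $\Gamma$, the open set $\{u(\tau)>\delta_\tau\}$ contains a neighbourhood of $\Gamma$. By the global (not merely local) uniform convergence $u^\varepsilon(\tau)\to u(\tau)$ on $\overline\Omega$ granted by \eqref{eq:conditionalbound}, one has $\{u(\tau)>\delta_\tau\}\subseteq A_{u^\varepsilon(\tau)}$ for $\varepsilon$ small (depending on $\tau$). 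Hence $\mathfrak{h}_{\{u(\tau)>\delta_\tau\},\dot w(\tau)}$, which is independent of $\varepsilon$, lies in $H^1_{\Gamma,\dot w(\tau)}(\Omega,A_{u^\varepsilon(\tau)})$, and the Euler--Lagrange equation for $u^\varepsilon(\tau)=\mathfrak{h}_{A_{u^\varepsilon(\tau)},w(\tau)}$ replaces $\mathfrak{h}_{A_{u^\varepsilon(\tau)},\dot w(\tau)}$ by $\mathfrak{h}_{\{u(\tau)>\delta_\tau\},\dot w(\tau)}$ in the work integrand. Now the only thing that varies with $\varepsilon$ is $\nabla u^\varepsilon(\tau)$, whose weak $L^2$-convergence suffices; a second application of the Euler--Lagrange equation for $u(\tau)=\mathfrak{h}_{A_{u(\tau)},w(\tau)}$ restores $\mathfrak{h}_{A_{u(\tau)},\dot w(\tau)}$ in the limit, and also gives $A_{u(\tau)}\in\mathcal O_{\Gamma,\dot w(\tau)}$ — a fact you need to even state \ref{def:EB} and which your sketch does not establish. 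As for $W^{2,p}(\Omega)$ with $p>d$: it is used to bound $\|\dot w(\tau)\|_{W^{1,\infty}(\Omega)}$, which compensates the $\varepsilon^{-1}$ factor from $\nabla\Phi^\varepsilon$ against $|\Gamma^\varepsilon|=O(\varepsilon)$ and yields the $\tau$-integrable dominant; it is not primarily a tool for the domain-convergence issue you refer to. Finally, the lower inequality should be obtained by applying the Riemann-sum argument of \Cref{prop:upperineq} on $[s,t]$ with $s>0$ (where an auxiliary $w_s$ satisfying the analogue of \eqref{eq:safeassumption} with $A_{u(s)}$ in place of $A_0$ exists because $A_{u(s)}$ contains a neighbourhood of $\Gamma$), and then sending $s\to 0$ using the stability of $u(0)$; your telescoping from $t_0=0$ directly skips the step justifying that the limit of the Riemann sums equals the intrinsic work integral, which is exactly what \Cref{prop:upperineq} with the auxiliary datum is designed to control.
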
 
\begin{remark}
    Although it seems natural that the global Lipschitz estimate \eqref{eq:conditionalbound} should be a byproduct of the stability condition (if the boundary $\partial\Omega$ and the boundary datum $w$ are regular enough), we are not aware of results of this type for outer minimisers of the Alt--Caffarelli functional. Up to our knowledge, the closest available global regularity result is proved in \cite{fernandez2024continuity} (see also \cite[Appendix B]{Ede}), but it just provides global H\"older bounds. Unfortunately, as it is clear in the proof of \Cref{p:cond1}, these are not enough to pass to the limit the work of the prescribed displacement. 
\end{remark}

\subsection{One-dimensional framework}\label{sec:1d}

We write here the formulation of the debonding problem in one dimension, providing a simple framework where explicit examples are available. 

    Let $\Omega$ be the interval $(0,L)$ and set $\Gamma=\{0\}$. The case $\Gamma=\{0,L\}$ may be treated similarly. Consider the prescribed displacement $w\in AC([0,T])^+$ acting on $x=0$ and let $A_0\in \mathcal O_{\Gamma,w(0)}$. The toughness of adhesion is a function $\kappa\in L^{\infty}(0,L)^+$, positive outside $A_0$ and vanishing in $A_0$. We also define $\ell_0:=\sup\{\lambda\in [0,L]: (0,\lambda)\subseteq A_0\}$.

    \begin{prop}
        The map $t\mapsto u(t)$ is a DES of the debonding model if and only if it has the form
        \begin{equation}\label{eq:formu}
            u(t,x)=\begin{cases}
                0&\text{if }\ell(t)=0,\\
                w(t)\left(1-\frac{x}{\ell(t)}\right)^+ &\text{if }\ell(t)\in(0,L),\\
                w(t) &\text{if }\ell(t)=L,
            \end{cases}
        \end{equation}
        for some non-decreasing function $\ell\colon [0,T]\to [\ell_0,L]$, representing the debonding front, which satisfies:
        \begin{enumerate}[leftmargin = !, labelwidth=1cm, align = left] 
            \item [{\crtcrossreflabel{\textup{(CO)}$_\ell$}[def:COL]}] if $\ell(t)=0$, then $w(t)=0$;
            \item[{\crtcrossreflabel{\textup{(ID)}$_\ell$}[def:IDL]}] $\ell(0)=\ell_0$;
            \item[{\crtcrossreflabel{\textup{(GS)}$_\ell$}[def:GSL]}] if $\ell(t)\in (0,L)$, then
        \begin{align}
            &\frac 12 \frac{w(t)^2}{\ell(t)}\le \frac 12 \frac{w(t)^2}{\rho}+\int_{\ell(t)}^\rho \kappa\, dx\qquad\text{for all }\rho\in (\ell(t),L),\\
            &\frac 12 \frac{w(t)^2}{\ell(t)}\le \int_{\ell(t)}^L \kappa\, dx;
        \end{align}
    \item[{\crtcrossreflabel{\textup{(EB)}$_\ell$}[def:EBL]}] the function
            \[
                P(t):=\begin{cases}
                    0&\text{ if } \ell(t)=0 \text{ or }\ell(t)=L,\\
                    \dot w(t) \frac{w(t)}{\ell(t)}  &\text{ if }\ell(t)\in(0,L),
                \end{cases}
            \]
        belongs to $L^1(0,T)$ and for all $t\in [0,T]$ there holds
        \begin{equation}
            F(t,\ell(t))+\int_{\ell_0}^{\ell(t)}\kappa\, dx= F(0,\ell_0) +\int_0^t P(\tau)\, d\tau,
        \end{equation}
        where the energy $F\colon [0,T]\times [0,L]\to [0,+\infty)$ is defined as
        \begin{equation}
            F(t,\ell):=\begin{cases}
                0&\text{if } \ell=0 \text{ or }\ell=L,\\
                \frac 12 \frac{w(t)^2}{\ell} &\text{if } \ell\in (0,L).
            \end{cases}
        \end{equation}
    \end{enumerate}
    Moreover, one has
    \begin{equation}\label{eq:Au1d}
            A_{u(t)}=A_0\cup (0,\ell(t)), \qquad\text{for all }t\in [0,T].
        \end{equation}
    \end{prop}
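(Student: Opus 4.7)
The plan is to prove the two implications separately, with the bulk of the work being explicit computations tailored to the one-dimensional geometry.

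\emph{Forward direction.} Assume $t \mapsto u(t)$ is a DES. By \ref{def:GS} and \Cref{lemma:equivalent}, $u(t) = \mathfrak{h}_{A_{u(t)}, w(t)}$; by \Cref{lemma:minimiser_dirichlet}, this function is affine (harmonic in 1D) on each connected component of $A_{u(t)}$, non-negative, bounded above by $w(t)$, and has positivity set equal to $(A_{u(t)})^+_{\Gamma, w(t)}$. Since $\Gamma = \{0\}$ is a single point, at most one component of $A_{u(t)}$ touches $\Gamma$; denoting by $\ell(t)$ the right endpoint of this component (and setting $\ell(t) := 0$ if none exists), solving for the affine profile matching the boundary value $w(t)$ at $0$ (together with the Dirichlet condition $0$ at $\ell(t)$ if $\ell(t) < L$, or the natural Neumann condition at $L$ if $\ell(t) = L$) yields the form \eqref{eq:formu}. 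Monotonicity of $\ell$ follows from that of $t\mapsto A_{u(t)}$ encoded in \eqref{eq:Au}, and \ref{def:IDL} and \ref{def:COL} are then immediate.

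For the identity \eqref{eq:Au1d}, since $\{u(s) > 0\} = (0, \ell(s))$ when $w(s) > 0$ and is empty when $w(s) = 0$, combined with $A_0 \supseteq (0, \ell_0)$, it suffices to rule out strict growth of $\ell$ at times when $w$ vanishes; this is obtained from \ref{def:EB}, exploiting continuity of $w$ (via $w \in AC$) and strict positivity of $\kappa$ on $(\ell_0, L)$. Condition \ref{def:GSL} is obtained by testing \ref{def:GS} with the competitors $v_\rho(x) = w(t)(1 - x/\rho)^+$ for $\rho \in (\ell(t), L)$ (first inequality) and $v \equiv w(t)$ (second inequality), using $\kappa \equiv 0$ on $A_0$ to simplify the dissipation integrals. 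Condition \ref{def:EBL} is recovered by explicit computation: the elastic energy equals $F(t, \ell(t))$, the dissipation $\int_{A_{u(t)} \setminus A_0} \kappa \, dx$ reduces to $\int_{\ell_0}^{\ell(t)} \kappa \, dx$, and the work integrand in \eqref{eq:EB2} evaluates to $\dot w(\tau) w(\tau)/\ell(\tau) = P(\tau)$, where $\mathfrak{h}_{A_{u(\tau)}, \dot w(\tau)}$ is piecewise affine with slope $-\dot w(\tau)/\ell(\tau)$ on $(0, \ell(\tau))$ and vanishes on the remaining components of $A_0$.

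\emph{Reverse direction.} Given a non-decreasing $\ell$ satisfying \ref{def:COL}--\ref{def:EBL}, I define $u$ by \eqref{eq:formu}. The identity \eqref{eq:Au1d} follows from \ref{def:COL} and \ref{def:EBL} by the same argument, and \ref{def:CO} and \ref{def:ID} are immediate from \eqref{eq:formu} together with the characterization of $\mathfrak{h}_{A_0, w(0)}$ in \Cref{lemma:minimiser_dirichlet}. For \ref{def:GS}, by \Cref{lemma:equivalent} it suffices to consider $v \in H^1_{\Gamma, w(t)}(\Omega)^+ \cap C^0(\Omega)$ with $A_{u(t)} \subseteq \{v > 0\}$; letting $r$ be the right endpoint of the connected component of $\{v > 0\}$ containing $0$, one has $r \geq \ell(t)$, and I split cases: if $r < L$, then $v(r) = 0$ by continuity and Cauchy--Schwarz gives $\tfrac{1}{2}\int_0^r |v'|^2 \, dx \geq w(t)^2/(2r)$, which, combined with $\int_{\{v > 0\} \setminus A_{u(t)}} \kappa \, dx \geq \int_{\ell(t)}^r \kappa \, dx$, concludes via the first inequality of \ref{def:GSL}; if $r = L$, the second inequality of \ref{def:GSL} applies directly. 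Finally, \ref{def:EB} is obtained by reversing the forward-direction computations.

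The main obstacle I foresee is the identity \eqref{eq:Au1d} --- specifically, ruling out strict growth of $\ell$ at times where $w$ vanishes --- which requires a careful interplay between the energy balance, the continuity of $w$, and the strict positivity of $\kappa$ on $(\ell_0, L)$. The rest reduces to routine one-dimensional computations with piecewise affine functions and a short case split for the reverse implication of the stability.
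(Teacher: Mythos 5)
Your overall strategy coincides with the paper's (explicit one-dimensional competitors for \ref{def:GSL}, a case split on the right endpoint of the positivity set of the test function for the converse of \ref{def:GS}, and the reduction of \ref{def:EB} to \ref{def:EBL} by explicit formulas), but the two steps you label as routine computations are exactly where the substance lies, and as written they contain genuine gaps. The first concerns the identity you assert for the work term: that $\mathfrak{h}_{A_{u(t)},\dot w(t)}$ is affine with slope $-\dot w(t)/\ell(t)$ on $(0,\ell(t))$ and vanishes elsewhere, i.e. formula \eqref{eq:formdot}. This is not automatic, because membership in $H^1_{\Gamma,\dot w(t)}(\Omega,A_{u(t)})$ only imposes vanishing \emph{a.e.} outside $A_{u(t)}=A_0\cup(0,\ell(t))$: if $A_0$ abuts $\ell(t)$ from the right (say $(\ell(t),\ell(t)+\delta)\subseteq A_0$, so that only the null set $\{\ell(t)\}$ separates the two pieces), the constrained Dirichlet minimiser is \emph{not} forced to vanish at $\ell(t)$ and your formula for the integrand fails. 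The paper rules this configuration out by testing the Euler--Lagrange equation \eqref{eq:euler_lagrange_dirichlet} and proving $\phi(\ell(t))=0$ for every admissible test function: otherwise $A_0$ contains a right neighbourhood of $\ell(t)$, and the first inequality of \ref{def:GSL} with $\rho$ in that neighbourhood, together with $\kappa\equiv 0$ on $A_0$ and the observation that $\dot w(t)\neq 0$ forces $w(t)>0$ (as $w\ge 0$), yields $\tfrac12 w(t)^2/\ell(t)\le \tfrac12 w(t)^2/\rho<\tfrac12 w(t)^2/\ell(t)$, a contradiction. Without this argument (needed in \emph{both} implications, and including the check that $A_{u(t)}\in\mathcal O_{\Gamma,\dot w(t)}$ when $\ell(t)=0$), neither verification of the energy balance goes through.

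The second gap is your plan for \eqref{eq:Au1d}: you propose to ``rule out strict growth of $\ell$ at times when $w$ vanishes'' using ``strict positivity of $\kappa$ on $(\ell_0,L)$''. But $\kappa$ is positive only outside $A_0$, and $A_0$ may intersect $(\ell_0,L)$; in fact such growth cannot be ruled out. Take $A_0=(0,\ell_0)\cup(\ell_0,b)$ and $w\equiv 0$: any non-decreasing $\ell$ with values in $[\ell_0,b]$ and $\ell(0)=\ell_0$ satisfies \ref{def:COL}--\ref{def:EBL}, yet $\ell$ grows while $w=0$. What \ref{def:EBL} actually gives (this is the paper's argument, with a suitable time $\bar t$ and a sequence $t_k\nearrow\bar t$ with $w(t_k)>0$, combined with \ref{def:GSL}) is $\int_{\ell^-(\bar t\,)}^{\ell(t)}\kappa\,dx=0$, so the growth region is contained in $A_0$ up to null sets and \eqref{eq:Au1d} survives in the a.e. sense in which set identities are understood in the paper. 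Note also that you invoke this argument in the forward direction, where it is unnecessary: there $\ell(t)$ is defined from $A_{u(t)}$, so $(0,\ell(t))\subseteq A_{u(t)}$ by construction, and the reverse inclusion is immediate from \eqref{eq:formu} and the monotonicity of $\ell$; the delicate point sits only in the reverse implication, and the argument you propose for it does not work as stated. The remaining parts of your proposal (derivation of \eqref{eq:formu}, the competitor choices for \ref{def:GSL}, and the converse stability via \Cref{lemma:equivalent}\eqref{item:equiv4} with the Cauchy--Schwarz bound) are correct and essentially the paper's.
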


    \begin{proof}
        Let first $t\mapsto u(t)$ be a DES, and for all $t\in [0,T]$ set $\ell(t):=\sup\{\lambda\in [0,L]: (0,\lambda)\subseteq A_{u(t)}\}$. Clearly $\ell$ is non-decreasing and \ref{def:IDL} holds true.

        We now observe that if $\ell(t)<L$, then necessarily one has $u(t,\ell(t))=0$. Otherwise, since $H^1(0,L)$ embeds in $C^0([0,L])$, by continuity $u(t)$ would be positive in a right neighbourhood of $\ell(t)$, thus contradicting the definition of $\ell(t)$. In particular, this argument shows that \ref{def:COL} holds true.

        Let us now prove the validity of \eqref{eq:formu}. If $\ell(t)=0$, by \ref{def:COL} we know that $w(t)=0$ as well, and so we can take $v=0$ as a competitor in \ref{def:GS}: this implies that $u(t)$ is constant, and so equal to $0$, in $[0,L]$. If $\ell(t)=L$, we can argue similarly: in this case $A_{u(t)}$ coincides with $(0,L)$, and so picking $v=w(t)$ in \ref{def:GS} yields that $u(t,x)=w(t)$. Finally, if $\ell(t)\in (0,L)$, we choose as a competitor $v=w(t)\left(1-\frac{x}{\ell(t)}\right)^+$: noting that $\{v>0\}\subseteq (0,\ell(t))\subseteq A_{u(t)}$ we deduce
        \begin{align}
            \frac 12\int_0^{\ell(t)}|\partial_x u(t)|^2\, dx&\le\frac 12\int_0^{L}|\partial_x u(t)|^2\, dx\le \frac 12\int_0^{L}|\partial_x v|^2\, dx+\int_{\{v>0\}\setminus A_{u(t)}}\kappa \, dx=\frac 12\int_0^{\ell(t)}|\partial_x v|^2\, dx\\
            &\le \frac 12\int_0^{\ell(t)}|\partial_x u(t)|^2\, dx,
        \end{align}
        where in the last inequality we exploited the fact that $v$ minimizes the Dirichlet energy among functions $f\in H^1(0,\ell(t))$ with $f(0)=w(t)$ and $f(\ell(t))=0$. As a byproduct, we obtain that $u(t)=v$ in $(0,\ell(t))$ and that $u(t)$ is constant in $[\ell(t),L]$, whence $u(t)=v$ in the whole $(0,L)$ and so \eqref{eq:formu} is proved. In particular, we observe that \eqref{eq:Au1d} holds true.
        
        We now focus on \ref{def:GSL}. Assume $\ell(t)\in (0,L)$ and without loss of generality let $w(t)>0$. For an arbitrary $\rho\in (\ell(t),L)$ pick as a competitor in \ref{def:GS} the function $v=w(t)\left(1-\frac{x}{\rho}\right)^+$. Since $\{v>0\}= (0,\rho)$ and recalling that $\kappa$ vanishes in $A_0$, by using \eqref{eq:Au1d} we obtain
        \begin{align}
            \frac 12 \frac{w(t)^2}{\ell(t)}&=\frac 12 \int_0^L |\partial_x u(t)|^2\, dx\le \frac 12\int_0^{L}|\partial_x v|^2\, dx+\int_{\{v>0\}\setminus A_{u(t)}}\kappa \, dx \\
                                           & =  \frac 12 \frac{w(t)^2}{\rho}+ \int_{(0,\rho)\setminus (A_0\cup (0,\ell(t)))}\!\!\!\!\!\kappa \, dx = \frac 12 \frac{w(t)^2}{\rho}+ \int_{\ell(t)}^\rho \kappa \, dx.
        \end{align}
        Taking as a competitor $v=w(t)$, we instead deduce
        \begin{equation}
            \frac 12 \frac{w(t)^2}{\ell(t)}\le \int_{(0,L)\setminus A_{u(t)}}\kappa \, dx= \int_{\ell(t)}^L \kappa\, dx,
        \end{equation}
        namely \ref{def:GSL} is proved.

        In order to prove \ref{def:EBL} we first claim that for any differentiability point $t$ of $w$ we have
        \begin{equation}\label{eq:formdot}
            \mathfrak{h}_{A_{u(t)},\dot w(t)}=\begin{cases}
                0&\text{if }\ell(t)=0,\\
                \dot w(t)\left(1-\frac{x}{\ell(t)}\right)^+ &\text{if }\ell(t)\in(0,L),\\
                \dot w(t) &\text{if }\ell(t)=L.
            \end{cases}
        \end{equation}
        If the claim is true, then by \eqref{eq:formu} and \eqref{eq:Au1d} we readily deduce that
        \begin{subequations}\label{eq:rewriting}
        \begin{align}
            &P(t) =\int_0^L \partial_x \mathfrak{h}_{A_{u(t)},\dot w(t)}\,\partial_x u(t)\, dx &&\text{for a.e. }t\in [0,T],\\
            &F(t,\ell(t))=\frac 12 \int_0^L \partial_x u(t)^2\, dx &&\text{for all }t\in [0,T],\\
            &\int_{\ell_0}^{\ell(t)}\kappa \,dx=\int_{A_u(t)\setminus A_0}\kappa \,dx &&\text{for all }t\in [0,T],
        \end{align}
        \end{subequations}
        whence \ref{def:EBL} is just a simple rewriting of \ref{def:EB}.
        In order to prove the claim, we first observe that if $\ell(t)=0$ then \ref{def:COL} yields $w(t)=0$ and so also $\dot w (t)=0$ since $w$ is non-negative.
        Thus, the right-hand side of \eqref{eq:formdot}, for the moment denoted by $\overline u$, is really a function in $H^1_{0,\dot w(t)}((0,L), A_{u(t)})$.
        We now show that $\overline u$ fulfils \eqref{eq:euler_lagrange_dirichlet}, so we fix $\phi\in H^1(0,L)$ with $\phi(0)=0$ and vanishing outside $ A_{u(t)}=A_0\cup (0,\ell(t))$, and we compute
        \begin{equation}
            \int_0^L \partial_x \overline u \partial_x \phi\, dx=\begin{cases}
                0 &\text{if }\ell(t)=0\text{ or }\ell(t)=L,\\
                -\frac{\dot w(t)}{\ell(t)}\phi(\ell(t))  &\text{if }\ell(t)\in(0,L).
            \end{cases}
        \end{equation}
        If $\ell(t)=0$ or $\ell(t)=L$ or $\dot w(t)=0$ we conclude, so assume that $\ell(t)\in (0,L)$ and $\dot w(t)\neq 0$, which in particular implies $w(t)>0$, and let us prove that in this case there holds $\phi(\ell(t))=0$. If not, by continuity there exists a right neighbourhood of $\ell(t)$ in which $\phi$ is different from $0$, and so necessarily $A_0$ contains such neighbourhood. By choosing $\rho \in(\ell(t),L)$ in this neighbourhood, by using \ref{def:GSL} and recalling that $\kappa $ vanishes in $A_0$ we thus infer
        \begin{equation}
            \frac 12 \frac{w(t)^2}{\ell(t)}\le \frac 12 \frac{w(t)^2}{\rho}+\int_{\ell(t)}^\rho \kappa\, dx=\frac 12 \frac{w(t)^2}{\rho}<\frac 12 \frac{w(t)^2}{\ell(t)},
        \end{equation}
        and we reach a contradiction. Hence, \eqref{eq:formdot} holds true.

        We now prove the opposite implication. Properties \ref{def:CO} and \ref{def:ID} directly follow from \eqref{eq:formu} and \ref{def:COL} and \ref{def:IDL}, respectively. 
        
        We also claim that \eqref{eq:Au1d} holds true. If $\ell(t)=0$, the identity is trivial, so assume $\ell(t)>0$. If $w(t)>0$, it easily follows noting that
        \begin{equation}
            A_{u(t)}=A_0\cup\bigcup_{s\in [0,t]}\{u(s)>0\}=A_0\cup\bigcup_{\substack{{s\in [0,t]}\\{\text{s.t. } w(s)>0}}}(0,\ell(s))=A_0\cup (0,\ell(t)).
        \end{equation}
        If $w(t)=0$ instead, set $\bar t:=\min\{s\in [0,t]:\, w(s)=0\}$. If $\bar t=0$, then $w=0$ in $[0,t]$, whence $A_{u(t)}=A_0$ and
        \begin{equation}
            \int_{\ell_0}^{\ell(t)}\kappa \, dx=0
        \end{equation}
        by \ref{def:EBL}. This implies $\ell(t)=\ell_0$, and so \eqref{eq:Au1d} holds true. If $\bar t>0$, then consider a sequence $t_k$ converging to $\bar t$ from the left satisfying $w(t_k)>0$ (whence also $\ell(t_k)>0$). 
        So, we have
        \begin{equation}
            A_{u(t)}=A_0\cup\bigcup_{\substack{{s\in [0,t]}\\{\text{s.t. } w(s)>0}}}(0,\ell(s))=A_0\cup\bigcup_{k\in \n}(0,\ell(t_k))=A_0\cup (0,\ell^-(\bar t\,)).
        \end{equation}
        Moreover, from \ref{def:EBL} we deduce
        \begin{align}
            \int_{\ell^-(\bar t\,)}^{\ell(t)}\kappa\, dx&=\lim\limits_{k\to +\infty}\int_{\ell(t_k)}^{\ell(t)}\kappa\, dx=\lim\limits_{k\to +\infty}\left( F(t_k,\ell(t_k))- F(t,\ell(t))+\int_{t_k}^t P(\tau)\, d\tau\right)\\
            &=- F(t,\ell(t))+\int_{\bar t}^t P(\tau)\, d\tau+\lim\limits_{k\to +\infty} F(t_k,\ell(t_k))=\lim\limits_{k\to +\infty} F(t_k,\ell(t_k)).
        \end{align}
        If this last limit vanishes we conclude.
        Indeed, it would imply that the interval $(\ell^-(\bar t\,), \ell(t))$ is contained in $A_0$. To show it, we first observe that if $\ell(t_k)=L$ for some $k\in \n$, then $F(t_k,\ell(t_k))=0$ definitively. If instead $\ell(t_k)<L$ for all $k\in\n$, by \ref{def:GSL} we deduce
        \begin{equation}
            \lim\limits_{k\to +\infty}F(t_k,\ell(t_k))= \lim\limits_{k\to +\infty}\frac 12 \frac{w(t_k)^2}{\ell(t_k)}\le\lim\limits_{k\to +\infty}\left( \frac 12 \frac{w(t_k)^2}{\ell^-(\bar t\,)}+\int_{\ell(t_k)}^{\ell^-(\bar t\,)}\kappa \, dx\right)=0,
        \end{equation}
        whence \eqref{eq:Au1d} is proved.

        We now focus on \ref{def:GS}. Fix $t\in [0,T]$ and without loss of generality we may assume that $\ell(t)\in (0,L)$ and $w(t)>0$. We then pick $v\in H^1(0,L)$ with $v(0)=w(t)$, and we consider $\rho:=\sup\{\lambda \in (0,L]:\, (0,\lambda)\subseteq \{v>0\}\}$. If $\rho<L$, then $v(\rho)=0$ and there holds
        \begin{equation}
            \frac 12 \frac{w(t)^2}{\ell(t)}\le \frac 12 \frac{w(t)^2}{\rho}+\int_{(0,\rho)\setminus(0,\ell(t))}\kappa\, dx.
        \end{equation}
        Indeed, the above inequality is trivial in case $\rho\in (0,\ell(t)]$, while it follows by \ref{def:GSL} if $\rho\in (\ell(t),L)$. Exploiting the fact that $\kappa$ vanishes on $A_0$ we thus obtain
        \begin{align}
            \frac 12\int_0^L|\partial_x u(t)|^2\,dx&=\frac 12 \frac{w(t)^2}{\ell(t)}\le \frac 12 \frac{w(t)^2}{\rho}+\int_{(0,\rho)\setminus(0,\ell(t))}\kappa\, dx\le \frac 12\int_0^\rho|\partial_x v|^2\,dx+\int_{\{v>0\}\setminus(0,\ell(t))}\kappa\, dx\\
            &\le  \frac 12\int_0^L|\partial_x v|^2\,dx+\int_{\{v>0\}\setminus(A_0\cup (0,\ell(t)))}\kappa\, dx \\
            & = \frac 12\int_0^L|\partial_x v|^2\,dx+\int_{\{v>0\}\setminus A_{u(t)}}\kappa\, dx.
        \end{align}
        If $\rho=L$ instead, then $\{v>0\}=(0,L)$ and hence we deduce
        \begin{equation}
            \frac 12\int_0^L|\partial_x u(t)|^2\,dx=\frac 12 \frac{w(t)^2}{\ell(t)}\le \int_{\ell(t)}^L \kappa\, dx\le \frac 12\int_0^L|\partial_x v|^2\,dx+\int_{\{v>0\}\setminus A_{u(t)}}\kappa\, dx,
        \end{equation}
        so \ref{def:GS} is proved.

        We are just left to show the validity of \ref{def:EB}. By exploiting \eqref{eq:Au1d}, \ref{def:COL} and \ref{def:GSL}, arguing exactly as in the proof of the reverse implication, one deduces that $A_{u(t)}\in \mathcal O_{0,\dot w(t)}$ and that formula \eqref{eq:formdot} holds true for all differentiability points of $w$. By using \eqref{eq:rewriting}, we directly infer that the map
        \begin{equation}
            t\mapsto \int_0^L \partial_x \mathfrak{h}_{A_{u(t)},\dot w(t)}\,\partial_x u(t)\, dx
        \end{equation}
        belongs to $L^1(0,T)$ and finally \ref{def:EB} can be directly obtained from \ref{def:EBL}. 
    \end{proof}

    We finally present two examples which show how uniqueness and time-regularity of DES of the debonding model are in general not expected. Compare also with~\cite[Section 2]{RivQuas}.
    
    First, consider a constant prescribed displacement $w(t)=w>0$ and let $A_0=(0,\ell_0)$ with $\ell_0\in \left(0,\frac L2\right)$. For an arbitrary $\alpha\in \left(\ell_0,\frac L2\right]$ consider the toughness
    \begin{equation}
        \kappa(x)=\begin{cases}\displaystyle
            \frac{w^2}{2x^2} & \text{ if }x\in [\ell_0,\alpha],\vspace{2mm}
\\            \displaystyle\frac{w^2}{2\alpha^2} & \text{ if }x\in (\alpha,L].
        \end{cases}
    \end{equation}
    In this setting, we claim that any non-decreasing function $\ell\colon [0,T]\to [\ell_0,\alpha)$ with $\ell(0)=\ell_0$ gives rise to a DES via formula \eqref{eq:formu}, whence nor uniqueness nor continuity holds.

    We just need to show that \ref{def:GSL} and \ref{def:EBL} are automatically satisfied. In the current situation, notice that the global stability condition is equivalent to the system of inequalities
    \begin{equation}
        \begin{cases}\displaystyle
            \frac{1}{\ell(t)}\le \frac{1}{\rho}+\int_{\ell(t)}^\rho \frac{1}{x^2}\, dx &\text{for all }\rho\in (\ell(t),\alpha],\vspace{2mm}\\\displaystyle
            \frac{1}{\ell(t)}\le \frac{1}{\rho}+\int_{\ell(t)}^\alpha \frac{1}{x^2}\, dx+\int_{\alpha}^\rho \frac{1}{\alpha^2}\, dx&\text{for all }\rho\in (\alpha,L),\vspace{2mm}\\\displaystyle
            \frac{1}{\ell(t)}\le\int_{\ell(t)}^\alpha \frac{1}{x^2}\, dx+\int_{\alpha}^L \frac{1}{\alpha^2}\, dx.
        \end{cases}
    \end{equation}
    The first inequality is always verified, while the second one and the third one can be equivalently rewritten as
    \begin{equation}
         \begin{cases}\displaystyle
         0\le \frac{(\rho-\alpha)^2}{\rho\alpha^2}& \text{for all }\rho\in (\alpha,L),\vspace{2mm}\\\displaystyle
         0\le \frac{L-2\alpha}{\alpha^2},
         \end{cases}
    \end{equation}
    thus \ref{def:GSL} is always true.

    Observing that $P(t)\equiv 0$ since $w$ is constant, we also have
    \begin{equation}
        F(t,\ell(t))-F(0,\ell_0) +\int_{\ell_0}^{\ell(t)}\kappa\, dx= \frac{w^2}{2}\left(\frac{1}{\ell(t)}-\frac{1}{\ell_0}+\int_{\ell_0}^{\ell(t)}\frac{1}{x^2}\, dx\right)=0,
    \end{equation}
    whence also \ref{def:EBL} is fulfilled.

    A less pathological situation in which time-continuity fails is given by the following example. Let $w(t)=t$, $A_0=(0,\ell_0)$ with $\ell_0\in (0,L)$, and consider a constant toughness $\kappa>0$ in $[\ell_0,L]$. For $T>\sqrt{2\kappa}L/2$, we claim that the functions
    \begin{subequations}
    \begin{align}\label{eq:example1}
        \ell(t)&:=\begin{cases}
            \ell_0 &\text{if }t\in [0,\sqrt{2\kappa}\ell_0),\\
            \frac{t}{\sqrt{2\kappa}} &\text{if }t\in [\sqrt{2\kappa}\ell_0,\sqrt{2\kappa}L/2),\\
            L &\text{if }t\in [\sqrt{2\kappa}L/2,T],
        \end{cases}&& \text{if }\ell_0<\frac L2,\\\label{eq:example2}
        \ell(t)&:=\begin{cases}
            \ell_0 &\text{if }t\in [0,\sqrt{2\kappa\ell_0(L-\ell_0)}),\\
            L &\text{if }t\in [\sqrt{2\kappa\ell_0(L-\ell_0)},T],
        \end{cases}&& \text{if }\ell_0\ge\frac L2,
    \end{align}
    \end{subequations}
    give rise to a DES. Indeed, notice that in this setting the global stability condition is equivalent to the inequality
    \begin{equation}\label{eq:GSparticular}
        t\le \sqrt{2\kappa \ell(t)\min\{\ell(t),L-\ell(t)\}}\qquad\text{whenever }\ell(t)<L,
    \end{equation}
    while the energy balance is equivalent to the system
    \begin{equation}\label{eq:EBparticular}
        \begin{cases}\displaystyle
            \frac{t^2}{2\ell(t)}+\kappa(\ell(t)-\ell_0)=\int_0^t \frac{\tau}{\ell(\tau)}\, d\tau &\text{if }\ell(t)<L,\\\displaystyle
            \kappa(L-\ell_0)=\int_0^{t_\ell}\frac{\tau}{\ell(\tau)}\, d\tau &\text{if }\ell(t)=L,
        \end{cases}
    \end{equation}
    where $t_\ell:=\sup\{s\in[0,T]:\, \ell(s)<L\}$.

    It is now immediate to check that the debonding fronts defined in \eqref{eq:example1} and \eqref{eq:example2} fulfil both \eqref{eq:GSparticular} and \eqref{eq:EBparticular}.

\section{Properties of displacement energetic solutions}\label{sec:properties}

This section is devoted to collect some properties that displacement energetic solutions automatically possess in view of the global stability condition. We begin by showing some uniform bounds any DES satisfies.
\begin{prop}\label{prop:boundDES}
    Assume the map $t\mapsto u(t)$ satisfies \ref{def:CO} and \ref{def:GS}. Then $u(t)$ is non-negative, bounded, locally Lipschitz continuous in $\Omega$, and harmonic in $\{u(t)>0\}$ for all $t\in [0,T]$. Moreover, the following uniform-in-time bounds hold true:
    \begin{itemize}
        \item $\sup\limits_{t\in [0,T]}\|u(t)\|_{L^\infty(\Omega)}\le M$;
        \item $\sup\limits_{t\in [0,T]}\|u(t)\|_{H^1(\Omega)}\le C$;
        \item for all $\Omega'$ well contained in $\Omega$, there exists $C'>0$ such that $\sup\limits_{t\in [0,T]}\|u(t)\|_{C^{0,1}(\Omega')}\le C'$.
    \end{itemize}
    
\end{prop}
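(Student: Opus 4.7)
The plan is to observe that the global stability condition \ref{def:GS} is essentially the statement that $u(t)$ minimises the Alt--Caffarelli functional $\mathcal{AC}(\cdot,A_{u(t)})$ in $H^1_{\Gamma,w(t)}(\Omega)$, and then to read off all the asserted properties from \Cref{lemma:minimiser_AltCaffarelli}.

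More precisely, by \ref{def:CO} each function $u(t)$ is continuous, hence $\{u(t)>0\}$ is open; consequently $A_{u(t)}=A_0\cup\bigcup_{s\in[0,t]}\{u(s)>0\}$ is an open set (in particular Lebesgue-measurable). From its very definition one has $\{u(t)>0\}\subseteq A_{u(t)}$, so that $\mathcal{AC}(u(t),A_{u(t)})=\tfrac12\int_\Omega|\nabla u(t)|^2\,dx$. Combining this identity with \ref{def:GS} shows that $u(t)$ is a minimiser of $v\mapsto\mathcal{AC}(v,A_{u(t)})$ in $H^1_{\Gamma,w(t)}(\Omega)$, where the boundary datum $w(t)$ is non-negative thanks to \eqref{eq:boundednessw}.

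At this point \Cref{lemma:minimiser_AltCaffarelli} applies with $\eta=w(t)$ and $A=A_{u(t)}$, delivering in one shot: the bounds $0\le u(t)\le\esssup_\Gamma w(t)\le M$ (whence both non-negativity and the uniform $L^\infty$ estimate), the harmonicity of $u(t)$ in the open set $\{u(t)>0\}$, and the local Lipschitz estimate $\|u(t)\|_{C^{0,1}(\Omega')}\le C'(1+\esssup_\Gamma w(t))\le C'(1+M)$ for every $\Omega'$ well contained in $\Omega$. This latter estimate is uniform in $t$ precisely because the constant $C'$ furnished by \Cref{lemma:minimiser_AltCaffarelli} is independent of the set $A$.

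For the uniform $H^1$ bound, I would simply test \ref{def:GS} with the competitor $v=w(t)\in H^1_{\Gamma,w(t)}(\Omega)$: the penalty term is controlled by $\|\kappa\|_{L^\infty(\Omega)}|\Omega|$, yielding $\|\nabla u(t)\|_{L^2(\Omega)}^2\le\|\nabla w(t)\|_{L^2(\Omega)}^2+2\|\kappa\|_{L^\infty(\Omega)}|\Omega|$; since $w\in AC([0,T];H^1(\Omega))$ the right-hand side is uniformly bounded in $t$, and the already-established $L^\infty$ bound controls $\|u(t)\|_{L^2(\Omega)}$. No real obstacle is anticipated here: essentially all the work has been done in \Cref{lemma:minimiser_AltCaffarelli}, and the only thing to verify is that the self-referential definition of $A_{u(t)}$ automatically turns \ref{def:GS} into an Alt--Caffarelli minimality condition.
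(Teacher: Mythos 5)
Your proof is correct and follows essentially the same route as the paper: you observe that $\{u(t)>0\}\subseteq A_{u(t)}$ forces $\mathcal{AC}(u(t),A_{u(t)})=D(u(t))$, so \ref{def:GS} says precisely that $u(t)$ minimises $\mathcal{AC}(\cdot,A_{u(t)})$ over $H^1_{\Gamma,w(t)}(\Omega)$, whence \Cref{lemma:minimiser_AltCaffarelli} yields non-negativity, the $L^\infty$ and local Lipschitz bounds, and harmonicity on $\{u(t)>0\}$; the gradient bound then follows by testing with $w(t)$. The only cosmetic difference is that you close the $H^1$ bound via the $L^\infty$ estimate rather than via Poincar\'e as in the paper, which is equally valid.
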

\begin{proof}
    By combining \ref{def:GS} and \Cref{lemma:minimiser_AltCaffarelli} we deduce that $u(t)$ is non-negative, bounded, locally Lipschitz continuous in $\Omega$, and harmonic in $\{u(t)>0\}$ for all $t\in [0,T]$. Moreover, by \eqref{eq:boundednessw} the following bounds hold:
    \begin{align}
        \|u(t)\|_{L^\infty(\Omega)}&\le\esssup_\Gamma w(t)\le M,\\
        \|u(t)\|_{C^{0,1}(\Omega')}&\le C'\Big(1+\esssup_\Gamma w(t)\Big)\le C'(1+M).
    \end{align}
    Above, $\Omega'$ is a well contained subset of $\Omega$.

    To obtain the uniform bound in $H^1(\Omega)$ we first use $w(t)$ as a competitor in \ref{def:GS}, deducing
    \begin{align}
        \frac 12\int_\Omega|\nabla u(t)|^2 \,dx\le \frac 12\int_\Omega|\nabla w(t)|^2 \,dx +\|\kappa\|_{L^\infty(\Omega)}|\Omega|\le \max\limits_{t\in [0,T]}\|w(t)\|^2_{H^1(\Omega)}+\|\kappa\|_{L^\infty(\Omega)}|\Omega|\le C.
    \end{align}
    We then conclude by means of Poincar\'e inequality.
\end{proof}

\begin{remark}
    The local Lipschitz continuity of $u(t)$ is essentially due to the fact that global stability \ref{def:GS} implies that $u(t)$ is an outward minimiser of the Alt--Caffarelli functional, in the terminology of~\cite{Velichkov}. On the contrary, since the inward minimality condition is not granted by \ref{def:GS}, we can not ensure that the positivity set $\{u(t)>0\}$ is of (locally) finite perimeter, unlike in~\cite{FeldKimPoz2}.
\end{remark}

We now show how the global stability condition implies an upper energy inequality, at least assuming a condition similar to \eqref{eq:safeassumption}. This result will be used later on to prove \Cref{lemma:safe}.

\begin{prop}\label{prop:upperineq}
    Let the map $t\mapsto u(t)$ satisfy \ref{def:CO} and \ref{def:GS}. Moreover,  assume that there exists $\widetilde w\in AC([0,T];H^1(\Omega))$ satisfying $\widetilde w(t)\in H^1_{\Gamma, w(t)}(\Omega, A_{u(t)})^+$ for all $t\in [0,T]$. 
    If the map
    \begin{equation}
        \displaystyle t\mapsto \int_\Omega \nabla \mathfrak{h}_{A_{u(t)},\dot w(t)}\cdot \nabla u(t) \, dx
    \end{equation}
    is measurable, then the following inequality holds true
    \begin{equation}\label{eq:upperineq}
            \frac 12 \int_\Omega |\nabla u(t)|^2 dx+\int_{A_{u(t)}\setminus A_0}\kappa\,d x\ge   \frac 12 \int_\Omega |\nabla u(0)|^2 dx+\int_0^t \int_\Omega \nabla \mathfrak{h}_{A_{u(\tau)},\dot w(\tau)}\cdot \nabla u(\tau) \, dx\, d\tau
        \end{equation}
        for all $t\in [0,T]$.
\end{prop}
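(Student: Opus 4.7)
My approach is a standard Riemann-sum / telescoping argument producing the dissipation-side inequality for energetic evolutions, adapted here to the free-boundary setting. The main idea is to use \ref{def:GS} at an earlier time $s$ with a competitor built from the later displacement $u(t)$ corrected by the auxiliary datum $\widetilde w$, whose role is precisely to provide a test function with the correct trace on $\Gamma$ and the correct support.

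Specifically, I would fix a partition $0 = t_0 < t_1 < \cdots < t_N = t$ and, for each $k$, plug
\[ v_k := u(t_k) + \widetilde w(t_{k-1}) - \widetilde w(t_k) \]
into \ref{def:GS} at time $t_{k-1}$. Since $\widetilde w(\tau) = w(\tau)$ on $\Gamma$, the competitor satisfies $v_k \in H^1_{\Gamma,w(t_{k-1})}(\Omega)$. The crucial observation is that, because $\widetilde w(s) = 0$ a.e.\ outside $A_{u(s)}$ for every $s$ and $A_{u(\cdot)}$ is monotone non-decreasing by construction \eqref{eq:Au}, both $u(t_k)$ and $\widetilde w(t_k) - \widetilde w(t_{k-1})$ vanish a.e.\ on $\Omega \setminus A_{u(t_k)}$; hence $\{v_k > 0\} \setminus A_{u(t_{k-1})} \subseteq A_{u(t_k)} \setminus A_{u(t_{k-1})}$. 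Expanding $\tfrac 12 \|\nabla v_k\|_{L^2(\Omega)}^2$, rearranging, and summing over $k$ (using the disjointness of the increments $A_{u(t_k)} \setminus A_{u(t_{k-1})}$) yields
\[ \tfrac 12 \|\nabla u(t)\|_{L^2(\Omega)}^2 - \tfrac 12 \|\nabla u(0)\|_{L^2(\Omega)}^2 + \int_{A_{u(t)} \setminus A_0} \kappa \, dx \;\ge\; \sum_{k=1}^N \int_\Omega \nabla u(t_k) \cdot \nabla \big(\widetilde w(t_k) - \widetilde w(t_{k-1})\big) \, dx \;-\; \tfrac 12 \sum_{k=1}^N \|\nabla \widetilde w(t_k) - \nabla \widetilde w(t_{k-1})\|_{L^2(\Omega)}^2 . \]

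Next I would discard the quadratic error: by absolute continuity of $\widetilde w$ into $H^1$, the last sum is controlled by $\max_k \|\widetilde w(t_k) - \widetilde w(t_{k-1})\|_{H^1(\Omega)} \cdot \int_0^T \|\dot{\widetilde w}(\tau)\|_{H^1(\Omega)} d\tau$, which tends to $0$ as the mesh of the partition vanishes. To recognise the target work term in the linear sum, two Euler--Lagrange identities are combined. First, using that $u(t_k) = \mathfrak{h}_{A_{u(t_k)},w(t_k)}$ by \Cref{lemma:equivalent}, and that $\widetilde w(t_k) - \widetilde w(t_{k-1}) \in H^1_{\Gamma,w(t_k)-w(t_{k-1})}(\Omega,A_{u(t_k)})$ (again by the vanishing outside $A_{u(t_k)}$), the identity \eqref{eq:euler_lagrange_dirichlet} allows me to replace the difference in the gradient-pairing by $\mathfrak{h}_{A_{u(t_k)},w(t_k)-w(t_{k-1})}$, which by linearity of the harmonic extension equals $\int_{t_{k-1}}^{t_k} \mathfrak{h}_{A_{u(t_k)},\dot w(\tau)} \, d\tau$. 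Second, for a.e.\ $\tau$ one has $\dot{\widetilde w}(\tau) \in H^1_{\Gamma,\dot w(\tau)}(\Omega,A_{u(\tau)})$ (since $\widetilde w(s) = 0$ on $\Omega \setminus A_{u(\tau)}$ for every $s \le \tau$ by monotonicity), whence \eqref{eq:euler_lagrange_dirichlet} applied to $u(\tau)$ itself gives the pointwise identity $\int_\Omega \nabla u(\tau) \cdot \nabla \dot{\widetilde w}(\tau)\, dx = \int_\Omega \nabla u(\tau) \cdot \nabla \mathfrak{h}_{A_{u(\tau)},\dot w(\tau)}\, dx$.

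The last, and most delicate, step is passing to the limit as the mesh tends to zero in
\[ \int_0^t \int_\Omega \nabla \tilde u_N(\tau) \cdot \nabla \dot{\widetilde w}(\tau)\, dx\, d\tau, \qquad \tilde u_N(\tau) := u(t_k) \ \text{for } \tau \in (t_{k-1},t_k], \]
and recovering $\int_0^t \int_\Omega \nabla u(\tau) \cdot \nabla \dot{\widetilde w}(\tau)\, dx\, d\tau$. This is the main obstacle, since $u(\cdot)$ is not assumed continuous in time with values in $H^1(\Omega)$ and so standard Riemann-sum convergence does not come for free. I would tackle it by exploiting the irreversibility of $A_{u(\cdot)}$ together with a Helly-type selection / essential-supremum argument in the spirit of the construction announced in the Introduction, which supplies enough one-sided control on $u(\cdot)$ at a.e.\ time. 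Once this is available, the uniform $H^1$-bound from \Cref{prop:boundDES} provides the dominating function $2C\|\dot{\widetilde w}(\tau)\|_{H^1(\Omega)} \in L^1(0,T)$, and dominated convergence, combined with the Euler--Lagrange identity from the previous paragraph, closes the argument and delivers \eqref{eq:upperineq}.
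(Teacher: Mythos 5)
Your construction is, up to the final limit passage, the same as the paper's: you test \ref{def:GS} at time $t_{k-1}$ with the competitor $u(t_k)+\widetilde w(t_{k-1})-\widetilde w(t_k)$, use the support properties of $\widetilde w$ and the monotonicity of $A_{u(\cdot)}$ to get the inclusion $\{v_k>0\}\setminus A_{u(t_{k-1})}\subseteq A_{u(t_k)}\setminus A_{u(t_{k-1})}$, telescope, discard the quadratic remainder, and identify the work term through the Euler--Lagrange identity \eqref{eq:euler_lagrange_dirichlet} applied to $\dot{\widetilde w}(\tau)\in H^1_{\Gamma,\dot w(\tau)}(\Omega,A_{u(\tau)})$. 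All of this matches the paper's argument (your detour through $\mathfrak{h}_{A_{u(t_k)},w(t_k)-w(t_{k-1})}$ is unnecessary but harmless, since $\widetilde w(t_k)-\widetilde w(t_{k-1})=\int_{t_{k-1}}^{t_k}\dot{\widetilde w}(\tau)\,d\tau$ already converts the linear sum into the piecewise-constant time integral).

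The genuine gap is exactly the step you flag as ``delicate'' and then resolve only by appeal to ``a Helly-type selection / essential-supremum argument \ldots\ which supplies enough one-sided control on $u(\cdot)$''. This is not a proof, and as stated it does not work: \Cref{helly} is applied in the paper (in \Cref{prop:rho}) to the monotone characteristic functions $\chi_{A^j(t)}$ arising in the discrete scheme, and even if you extracted weak$^*$ limits of $\chi_{A_{u(\cdot)}}$ along your partitions, this would give no convergence of $\nabla u(t^N_k)$ to $\nabla u(\tau)$ for a.e.\ $\tau$, which is what your dominated-convergence step needs; the map $t\mapsto u(t)$ has no assumed time regularity, so the right-endpoint interpolants $\tilde u_N(\tau)$ need not converge to $u(\tau)$ at a.e.\ $\tau$ for an arbitrary sequence of partitions. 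The paper avoids this issue altogether by choosing the partitions a posteriori via \cite[Lemma~4.5]{FrancMielk}: since the scalar map $\tau\mapsto\int_\Omega\nabla\dot{\widetilde w}(\tau)\cdot\nabla u(\tau)\,dx$ belongs to $L^1(0,T)$ (this is where the measurability hypothesis enters, combined with the uniform $H^1$ bound of \Cref{prop:boundDES} and $\nabla\dot{\widetilde w}\in L^1(0,T;L^2(\Omega))$), there exist partitions whose Riemann sums converge to its integral and which additionally approximate $\nabla\dot{\widetilde w}$ in the sense needed to kill the error terms. If you insist on arbitrary partitions, you would instead have to prove some one-sided $H^1$-continuity of $t\mapsto u(t)$ outside a countable set (which is plausible, using monotonicity of $A_{u(\cdot)}$, the characterisation $u(t)=\mathfrak{h}_{A_{u(t)},w(t)}$ and competitors of the form $u(\tau)+\widetilde w(s)-\widetilde w(\tau)$), but this is an additional nontrivial argument that your proposal neither states nor proves.
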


\begin{proof}
     We first observe that by \ref{def:GS} and \Cref{lemma:equivalent} we know that $u(t)=\mathfrak{h}_{A_{u(t)},w(t)}$ for all $t\in [0,T]$. Moreover, by the assumptions on $\widetilde w$ it is easy to infer that $\dot{\widetilde w}(t)\in H^1_{\Gamma,\dot w(t)}(\Omega, A_{u(t)})$, whence $A_{u(t)}\in \mathcal O_{\Gamma,\dot w(t)}$, for a.e. $t\in [0,T]$. By using \eqref{eq:euler_lagrange_dirichlet}, we now obtain that
    \begin{equation}\label{eq:workeqtilde}
        \int_\Omega \nabla \dot{\widetilde w}(t)\cdot \nabla u(t) \, dx=\int_\Omega \nabla \mathfrak{h}_{A_{u(t)},\dot w(t)}\cdot \nabla u(t) \, dx \quad\text{for a.e. $t\in [0,T]$}.
    \end{equation}
    In particular, since the right-hand side above is measurable by assumption, we infer that both terms actually belong to $L^1(0,T)$: indeed, due to the uniform bound of $u(t)$ in $H^1(\Omega)$ stated in \Cref{prop:boundDES}, the left hand-side can be bounded by $\|\nabla \dot{\widetilde w}(t)\|_{L^2(\Omega)}$, up to a multiplicative constant.
    
    Let us now focus on the proof of \eqref{eq:upperineq}. If $t=0$, the inequality is true, so we fix $t\in (0,T]$, and we consider a sequence of partitions $0=s_0^n<s_1^n<\dots<s^n_{k(n)}=t$ satisfying
    \begin{align}
        &\lim\limits_{n\to +\infty}\sup_{k=1,\dots,k(n)}(s^n_k-s^n_{k-1})=0,\label{eq:1}\\
        &\lim\limits_{n\to +\infty} \sum_{k=1}^{k(n)} (s^n_k-s^n_{k-1})\int_\Omega \nabla \dot{\widetilde w}(s^n_k)\cdot \nabla u(s^n_k)\, dx= \int_0^t\int_\Omega \nabla \dot{\widetilde w}(\tau)\cdot \nabla u(\tau)\,dx\,d\tau,\label{eq:2}\\
        &\lim\limits_{n\to +\infty} \sum_{k=1}^{k(n)}\left\|(s^n_k-s^n_{k-1})\nabla \dot{\widetilde w}(s^n_k)-\int_{s^n_{k-1}}^{s^n_k} \nabla \dot{\widetilde w}(\tau)\,d\tau\right\|_{L^2(\Omega)}=0. \label{eq:3}       
    \end{align}
    Such a sequence of partitions exists by~\cite[Lemma~4.5]{FrancMielk} since $\dot{\widetilde w}\in L^1(0,T;H^1(\Omega))$ by assumption, and since
    \begin{equation}
        t\mapsto \int_\Omega \nabla\dot{\widetilde w}(t)\cdot \nabla u(t)\,dx\in L^1(0,T)
    \end{equation}
    as we previously noticed. In particular, by the absolute continuity of the integral, we observe that
    \begin{equation}\label{eq:4}
        \lim\limits_{n\to +\infty}\sup_{k=1,\dots,k(n)}\int_{s^n_{k-1}}^{s^n_k}\|\nabla \dot{\widetilde w}(\tau)\|_{L^2(\Omega)}\, d\tau=0.
    \end{equation}
 For $k=1,\dots,k(n)$ we now take as a competitor for $u(s^n_{k-1})$ in \ref{def:GS} the function $v_{k-1}^n:= u(s^n_k)-\widetilde w(s^n_{k})+\widetilde w(s^n_{k-1})$, deducing 
 \begin{equation}
        \frac 12 \int_\Omega |\nabla u(s^n_{k-1})|^2 dx\le \frac 12 \int_\Omega |\nabla u(s^n_k)-\nabla (\widetilde w(s^n_{k})-\widetilde w(s^n_{k-1}))|^2 dx+\int_{\{v^n_{k-1}>0\}\setminus A_{u(s^n_{k-1})}}\kappa\, dx.
    \end{equation}
    Observing that $\{v^n_{k-1}>0\}\setminus A_{u(s^n_{k-1})}\subseteq A_{u(s^n_{k})}\setminus A_{u(s^n_{k-1})}$ since by assumption $\widetilde w(s^n_{k-1})=0$ outside $A_{u(s^n_{k-1})}$, by summing the above inequality from $k=1$ to $k(n)$ we deduce
    {\allowdisplaybreaks\begin{align}
        &\,\frac 12 \int_\Omega |\nabla u(t)|^2 dx-\frac 12 \int_\Omega |\nabla u(0)|^2 dx+\int_{A_{u(t)}\setminus A_0}\kappa\, dx \\
        &\quad \ge \sum_{k=1}^{k(n)}\frac 12 \int_\Omega |\nabla u(s^n_k)|^2 dx-\frac 12 \int_\Omega |\nabla u(s^n_k)-\nabla (\widetilde w(s^n_{k})-\widetilde w(s^n_{k-1}))|^2 dx\\
        &\quad=\sum_{k=1}^{k(n)}\int_{s^n_{k-1}}^{s^n_k}\frac{d}{d\tau}\frac 12 \int_\Omega |\nabla u(s^n_k)-(\widetilde w(s^n_{k})-\widetilde w(\tau))|^2 dx    \, d\tau   
        \\
        &\quad=\sum_{k=1}^{k(n)}\int_{s^n_{k-1}}^{s^n_k}\int_\Omega (\nabla u(s^n_k)-\nabla \widetilde w(\tau))\big)\cdot\nabla \dot{\widetilde w}(\tau) dx\, d\tau\\
        &\quad= \sum_{k=1}^{k(n)} (s^n_k{-}s^n_{k-1})\int_\Omega \nabla \dot{\widetilde w}(s^n_k)\cdot \nabla u(s^n_k)\, dx\\
        & \quad \hphantom{=} \,\,+ \underbrace{\sum_{k=1}^{k(n)}\int_\Omega \nabla u(s^n_k)\cdot\left[\int_{s^n_{k-1}}^{s^n_k} \nabla \dot{\widetilde w}(\tau)d\tau{-}(s^n_k{-}s^n_{k-1})\nabla \dot{\widetilde w}(s^n_k)\right]dx}_{=:J_1^n}\\
        &\quad \hphantom{=} \, \,+ \underbrace{\sum_{k=1}^{k(n)}\int_{s^n_{k-1}}^{s^n_k}\int_\Omega \nabla \dot{\widetilde w}(\tau)\cdot \nabla(w(\tau)-w(s^n_k))\, dx\,d\tau}_{=:J_2^n}.
    \end{align}}
    By using \eqref{eq:2}, the first term above converges to
    \begin{equation}
        \int_0^t\int_\Omega \nabla \dot{\widetilde w}(\tau)\cdot \nabla u(\tau)\,dx\,d\tau
    \end{equation}
    as $n\to +\infty$, while we now show that both $J_1^n$ and $J_2^n$ vanish. To this aim we first estimate
    \begin{align}
        |J_1^n|\le \sum_{k=1}^{k(n)}\|\nabla u(s^n_k)\|_{L^2(\Omega)}\left\|(s^n_k-s^n_{k-1})\nabla \dot{\widetilde w}(s^n_k)-\int_{s^n_{k-1}}^{s^n_k} \nabla  \dot{\widetilde w}(\tau)\,d\tau\right\|_{L^2(\Omega)},
    \end{align}
    which goes to zero by \eqref{eq:3} since we recall that \Cref{prop:boundDES} yields the uniform bound $\|\nabla u(s^n_k)\|_{L^2(\Omega)}\le C$.

    Regarding $J_2^n$, we argue as follows:
    \begin{align}
        |J_2^n|&\le \sum_{k=1}^{k(n)}\int_{s^n_{k-1}}^{s^n_k}\|\nabla \dot{\widetilde w}(\tau)\|_{L^2(\Omega)}\int_{\tau}^{s^n_k}\|\nabla \dot{\widetilde w}(r)\|_{L^2(\Omega)}\,dr\,d\tau\le \sum_{k=1}^{k(n)}\left(\int_{s^n_{k-1}}^{s^n_k}\|\nabla \dot{\widetilde w}(\tau)\|_{L^2(\Omega)}\,d\tau\right)^2\\
        &\le \sup_{k=1,\dots,k(n)}\left(\int_{s^n_{k-1}}^{s^n_k}\|\nabla \dot{\widetilde w}(\tau)\|_{L^2(\Omega)} \, d\tau\right)\int_0^t\|\nabla \dot{\widetilde w}(\tau)\|_{L^2(\Omega)}\, d\tau,
    \end{align}
    which vanishes by \eqref{eq:4}.

    Thus, we have proved that
    \begin{equation}
            \frac 12 \int_\Omega |\nabla u(t)|^2 dx+\int_{A_{u(t)}\setminus A_0}\kappa\,d x\ge   \frac 12 \int_\Omega |\nabla u(0)|^2 dx+\int_0^t \int_\Omega \nabla \dot{\widetilde w}(\tau)\cdot \nabla u(\tau) \, dx\, d\tau,
        \end{equation}
        and so we conclude by \eqref{eq:workeqtilde}.   
    \end{proof}
    
        Observe that, given a DES $t\mapsto u(t)$, the map $t\mapsto \nabla \mathfrak{h}_{A_{u(t)},\dot w(t)}$ does not necessarily belong to $L^1(0,T;L^2(\Omega))$, although
        \begin{equation}
            \displaystyle t\mapsto \int_\Omega \nabla \mathfrak{h}_{A_{u(t)},\dot w(t)}\cdot \nabla u(t) \, dx
        \end{equation}
        is in $L^1(0,T)$ and $\nabla u$ belongs to $L^\infty(0,T;L^2(\Omega))$ by \Cref{prop:boundDES}. This is the reason why in the previous proposition we needed the existence of the function $\widetilde w$ in order to complete the argument (to be precise, we needed $\nabla\dot{\widetilde w}$ to be in $L^1(0,T;L^2(\Omega))$). We present here a counterexample even in dimension one, in the framework of \Cref{sec:1d}.

        Let $\Omega=(0,L)$, $\Gamma=\{0\}$ and let $A_0=\emptyset$ (i.e. $\ell_0=0$) and $\kappa>0$ be a positive constant. We consider $w\in AC([0,T])^+$ such that $w(0)=0$ and $\sup_{t\in [0,T]} w(t)\le \sqrt{2\kappa} L/2$ to be chosen later. It may be checked (see also~\cite[Section 2]{RivQuas}) that the unique DES of the debonding model is given by \eqref{eq:formu} with
        \begin{equation}
            \ell(t)=\frac{w^*(t)}{\sqrt{2\kappa}},
        \end{equation}
        where $w^*(t):=\sup\limits_{s\in [0,t]}w(s)$ is the smallest non-decreasing function above $w$. Without loss of generality we may assume that $w^*(t)>0$ for positive times, and so in this setting we have
        \begin{align}
            \quad\,\int_0^T\left(\int_0^L \partial_x \mathfrak{h}_{A_{u(\tau)},\dot w(\tau)}^2\, dx\right)^\frac 12d\tau & =\int_0^T\left(\int_0^L \partial_x \mathfrak{h}_{(0,\ell(\tau)),\dot w(\tau)}^2\, dx\right)^\frac 12d\tau\\
            &=\int_0^T\left(\int_0^{\ell(\tau)} \left(\frac{\dot w(\tau)}{\ell(\tau)}\right)^2\, dx\right)^\frac 12d\tau \\
            & =\int_0^T \frac{|\dot w(\tau)|}{\sqrt{\ell(\tau)}} \, d\tau=(2\kappa)^\frac 14 \int_0^T \frac{|\dot w(\tau)|}{\sqrt{w^*(\tau)}} \, d\tau.
        \end{align}
        We now choose a suitable function $w$. Consider a vanishing and strictly decreasing sequence of positive times $\{t_j\}_{j\in\n}$ such that $t_1=T$ and consider another  vanishing and strictly decreasing sequence $\{a_j\}_{j\in\n}$ satisfying $a_1\le \sqrt{2\kappa}L/2$,
        \begin{equation}
            \sum_{j=1}^\infty a_j<+\infty,\quad\text{ and }\quad\sum_{j=1}^\infty \sqrt{a_j}=+\infty.
        \end{equation}
We then define $w$ as follows (see also \Cref{fig:graph_w_l})
        \begin{equation}
            w(t):=\begin{cases}
                \frac{2 a_j}{t_j-t_{j+1}}(t-t_{j+1}) &\text{if }t\in \left(t_{j+1}, \frac{t_{j+1}+t_j}{2}\right]\text{ for some } j\in \n,\\
                \frac{2 a_j}{t_j-t_{j+1}}(t_{j}-t) &\text{if }t\in \left(\frac{t_{j+1}+t_j}{2}, t_{j} \right]\text{ for some } j\in \n,\\
                0&\text{if }t=0.
            \end{cases}
        \end{equation}

    \begin{figure}
    \centering
        \begin{tikzpicture}[scale=0.70]
            \draw[->] (0,0) -- (16,0);
            \draw[->] (0,0) -- (0,10);
    
            \node[left]  at (0, 8) {$a_1$};
    
            \node[left]  at (0, 4) {$a_2$};
    
            \node[left]  at (0, 2) {$a_3$};
    
            \node[left]  at (0, 1) {$a_4$};
    
            \node[left]  at (0, .5) {$a_5$};
    
            \node[below] at (14, 0) {$t_1$};
    
            \node[below] at (10, 0) {$t_2$};
    
            \node[below] at (6, 0) {$t_3$};
    
            \node[below] at (2, 0) {$t_4$};
    
            \node[below] at (.2, 0) {$t_5$};
    
            \draw [dashed] (0, 8) -- (14, 8);
            \draw [dashed] (14, 0) -- (14, 8);
            
            \draw [dashed] (0, 4) -- (10, 4);
            \draw [dashed] (10, 0) -- (10, 4);
    
            \draw [dashed] (0, 2) -- (6, 2);
            \draw [dashed] (6, 0) -- (6, 2);
    
            \draw [dashed] (0, 1) -- (2, 1);
            \draw [dashed] (2, 0) -- (2, 1);
    
            \draw [dashed] (.2, .5) -- (.2, 0);

            \draw [red] (0, 0) -- (.1, .5);
            \draw [red] (.1, .5) -- (.2, 0);
            \draw [red] (.2, 0) -- (1.1, 1);
            \draw [red] (1.1, 1) -- (2, 0);
            \draw [red] (2, 0) -- (4, 2);
            \draw [red] (4, 2) -- (6, 0);
            \draw [red] (6, 0) -- (8, 4);
            \draw [red] (8, 4) -- (10, 0);
            \draw [red] (10, 0) -- (12, 8);
            \draw [red] (12, 8) -- (14, 0);

            \draw [blue] (0, .25) -- (.05, .25);
            \draw [blue] (.05, .25) -- (.1, .5);
            \draw [blue] (.1, .5) -- (.65, .5);
            \draw [blue] (.65, .5) -- (1.1, 1);
            \draw [blue] (1.1, 1) -- (3, 1);
            \draw [blue] (3, 1) -- (4, 2);
            \draw [blue] (4, 2) -- (7, 2);
            \draw [blue] (7, 2) -- (8, 4);
            \draw [blue] (8, 4) -- (11, 4);
            \draw [blue] (11, 4) -- (12, 8);
            \draw [blue] (12, 8) -- (14, 8);
    
            \draw [red, ultra thick] (16, 10) -- (17, 10);
            \node [right] at (17, 10) {$ w(t) $};
            \draw [blue, ultra thick] (16, 9.2) -- (17, 9.2);
            \node [right] at (17, 9.2) {$ w^\ast(t) $};

        \end{tikzpicture}
        \caption{The graphs of $w(t)$ and $w^\ast(t)$.}
        \label{fig:graph_w_l}
    \end{figure}
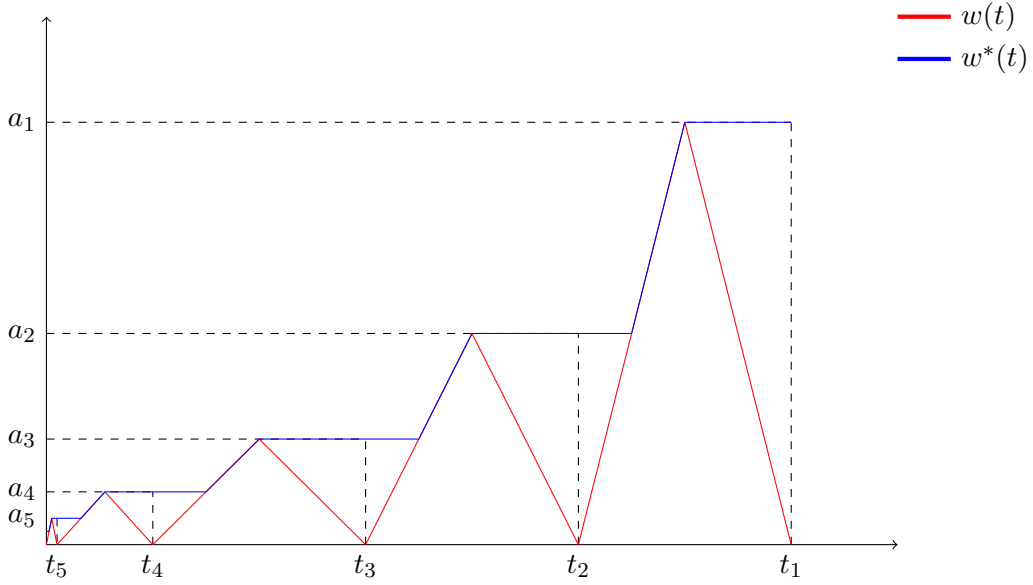
    
    It is then immediate to check that
    \begin{equation}
        \int_0^T |\dot w(\tau)|\, d\tau=\sum_{j=1}^\infty  \int_{t_{j+1}}^{t_j} \frac{2 a_j}{t_j-t_{j+1}}\, d\tau=2\sum_{j=1}^\infty a_j<+\infty,
    \end{equation}
    whence $w$ is absolutely continuous, but
    \begin{align}
        \int_0^T \frac{|\dot w(\tau)|}{\sqrt{w^*(\tau)}} \, d\tau&=\sum_{j=1}^\infty \frac{2 a_j}{t_j-t_{j+1}} \int_{t_{j+1}}^{t_j}\frac{1}{\sqrt{w^*(\tau)}} \, d\tau\ge \sum_{j=1}^\infty \frac{2 a_j}{t_j-t_{j+1}} \int_{t_{j+1}}^{t_j}\frac{1}{\sqrt{a_j}} \, d\tau\\
        &=2\sum_{j=1}^\infty \sqrt{a_j}=+\infty,
    \end{align}
    and so $t\mapsto \partial_x \mathfrak{h}_{A_{u(t)},\dot w(t)}$ does not belong to $L^1(0,T;L^2(0,L))$.

    \begin{remark}
    In the previous example, the underlying issue is given by the fact that at the initial time the film is completely glued to the substrate, while the regularity of $w$ does not play any role. Indeed, by properly tuning the sequence $\{t_j\}_{j\in\n}$, the function $w$ may be constructed to be Lipschitz continuous (or even smooth after a suitable regularization).
    \end{remark}

    We now make use of \Cref{prop:upperineq} in order to show the validity of \Cref{lemma:safe}.
  
    \begin{proof}[Proof of \Cref{lemma:safe}]
     We already know by \Cref{prop: almostEQ} that, given a SES $t\mapsto A(t)$, the map $u(t):=\mathfrak{h}_{A(t),w(t)}$ satisfies \ref{def:CO}, \ref{def:ID}, and \ref{def:GS}. 
     Moreover, inclusion \eqref{eq:contained} is true and $A_{u(0)}=A_0$. In particular, we deduce that $A_0\subseteq A_{u(t)}$ for all $t\in [0,T]$.

     Assumption \eqref{eq:safeassumption} now ensures that $\overline w\in AC([0,T];H^1(\Omega))$ satisfies $\overline w(t)\in H^1_{\Gamma,w(t)}(\Omega,A_{u(t)})^+$ for all $t\in [0,T]$. Furthermore, notice that $\dot{\overline w}(t)\in H^1_{\Gamma,\dot w(t)}(\Omega, A_{u(t)})$, and so that $A_{u(t)}\in \mathcal O_{\Gamma,\dot w(t)}$, for all common differentiability points of $w$ and $\overline w$. By \eqref{eq:contained} and \eqref{eq:euler_lagrange_dirichlet} we thus infer that
    \begin{equation}\label{eq:workidentity}
         \int_\Omega \nabla {\mathfrak{h}_{A(t),\dot w(t)}}\cdot \nabla \mathfrak{h}_{A(t),w(t)} \, dx=\int_\Omega \nabla \mathfrak{h}_{A_{u(t)},\dot w(t)}\cdot \nabla u(t) \, dx,\qquad \text{for a.e. }t\in [0,T].
    \end{equation}
    In particular, by \ref{def:EBS}, the right-hand side above belongs to $L^1(0,T)$. We are now in a position to exploit \Cref{prop:upperineq}: by combining it with \ref{def:EBS}, \eqref{eq:contained}, and \eqref{eq:workidentity} we indeed deduce
    \begin{align}
        \frac 12 \int_\Omega|\nabla u(t)|^2 \, dx+\int_{A_{u(t)}\setminus A_0}\kappa\,d x&\ge  \frac 12 \int_\Omega|\nabla u(0)|^2 \, dx+\int_0^t\int_\Omega \nabla \mathfrak{h}_{A_{u(\tau)},\dot w(\tau)}\cdot \nabla u(\tau) \, dxd\tau\\
        &=\mathcal E(0,A_0)+\int_0^t \int_\Omega \nabla {\mathfrak{h}_{A(\tau),\dot w(\tau)}}\cdot \nabla \mathfrak{h}_{A(\tau),w(\tau)} \, dxd\tau\\
        &= \mathcal E(t,A(t))+\int_{A(t)\setminus A_0}\kappa\,d x=\frac 12 \int_\Omega|\nabla u(t)|^2 \, dx+ \int_{A(t)\setminus A_0}\kappa\,d x\\
        &\ge \frac 12 \int_\Omega|\nabla u(t)|^2 \, dx+ \int_{A_{u(t)}\setminus A_0}\kappa\,d x.
    \end{align}
    
    In particular, we have proved that
    \begin{equation}
        \int_{A_{u(t)}\setminus A_0}\kappa\,d x=\int_{A(t)\setminus A_0}\kappa\,d x,\qquad\text{for all }t\in [0,T].
    \end{equation}
    Since $\kappa$ is positive outside $A_0$, this yields that $A_{u(t)}\setminus A_0=A(t)\setminus A_0$, whence \eqref{eq:hypinclusion} holds true and we conclude.
    \end{proof}

\section{Construction of displacement energetic solutions}\label{sec:existence}

In this section we prove \Cref{thm:safe} by constructing a DES in the sense of \Cref{defi:DES}, requiring assumption \eqref{eq:safeassumption}. To do so, we follow the classical approach of the Minimizing Movement scheme. For $j \in \n$ we consider the step-size $\tau_j:=\frac{T}{j}$ and for $i = 0, \dots, j$ we define the discrete times $t_i^j:=i \tau_j$.

Starting from the initial pair
\begin{equation}
    u_0^j:= \mathfrak{h}_{A_0,w(0)}\qquad \text{ and }\qquad A_0^j:= A_0,
\end{equation}
for $i=1,...,j$ we construct recursively the functions $u_i^j$ and the sets $A_i^j$ via the following algorithm:
\begin{subnumcases}{}
   \displaystyle u_i^j\in \operatorname*{arg\ min}\limits_{v\in H^1_{\Gamma,w(t^j_i)}(\Omega)}\mathcal{AC}(v,A_{i-1}^j),\label{eq:MMS1}\\
    A^j_i:=\{u_i^j>0\}\cup A^j_{i-1},\label{eq:MMS2}
\end{subnumcases}
where the Alt--Caffarelli functional $\mathcal{AC}$ has been introduced in \eqref{eq:E}. The existence of the minimisers $u^j_i$ is granted by \Cref{lemma:minimiser_AltCaffarelli}, which also implies that the sets $A_i^j$ are open. 

We now introduce the piecewise-constant interpolant $u^j$ as
\begin{equation}\label{piece}
    u^j(t) := \begin{cases}
        u_{i}^j & \text{if } t \in (t_i^j, t_{i+1}^j] \text{ for some $i\in\{1,...,j\}$}\,,\\
        \mathfrak{h}_{A_0,w(0)}&\text{if }t=0.
    \end{cases}
\end{equation}
In the same way we also define $A^j(t)$, and we set
\begin{equation}
    \overline w^j(t):= \begin{cases}
        \overline{w}(t_{i}^j) & \text{if } t \in (t_i^j, t_{i+1}^j] \text{ for some $i\in\{1,...,j\}$}\,,\\
        \overline w(0)&\text{if }t=0.
    \end{cases}
\end{equation}
Observe that, by construction, the set-valued function $t\mapsto A^j(t)$ is non-decreasing with respect to set inclusion.

\begin{remark}\label{remark:convergence_wj}
    Since $\overline w \in \AC([0, T]; H^1(\Omega))$ by assumption, for any sequence $t_n \to t$ we have $\overline w(t_n) \to \overline w(t)$ strongly in $H^1(\Omega)$. In particular, $\overline w^j(t) \to \overline w(t)$ strongly in $H^1(\Omega)$ uniformly in $[0, T]$.
\end{remark}

Similarly to \Cref{prop:boundDES}, the piecewise constant displacement fulfils the following properties and uniform bounds.

\begin{prop}\label{prop:bound} For all $t\in[0,T]$ the functions $u^j(t)$ are non-negative, bounded and locally Lipschitz continuous in $\Omega$. Moreover, $u^j(t)$ is harmonic in $\{u^j(t)>0\}$. Finally, the following uniform bounds hold true
\begin{enumerate}[(i)]
    \item $\sup\limits_{t\in [0,T]}\|u^j(t)\|_{L^\infty(\Omega)}\le M$;
    \item $\sup\limits_{t\in [0,T]}\|u^j(t)\|_{H^1(\Omega)}\le C$;
    \item for all $\Omega$ well contained in $\Omega$, there exists $C'>0$ such that $\sup\limits_{t\in [0,T]}\|u^j(t)\|_{C^{0,1}(\Omega')}\le C'$.
\end{enumerate}
\end{prop}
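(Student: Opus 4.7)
The proof is essentially a direct consequence of the machinery developed in \Cref{sec:preliminaries}, once one observes that by construction each value $u^j(t)=u_i^j$ is a minimiser of the Alt--Caffarelli functional $\mathcal{AC}(\,\cdot\,,A_{i-1}^j)$ over $H^1_{\Gamma,w(t_i^j)}(\Omega)$, with boundary datum that is uniformly bounded thanks to \eqref{eq:boundednessw}. The case $t=0$ is handled separately by invoking \Cref{lemma:minimiser_dirichlet} applied to the open set $A_0\in\mathcal{O}_{\Gamma,w(0)}$, which grants all the claimed properties for $u^j(0)=\mathfrak{h}_{A_0,w(0)}$ and the bound $0\le \mathfrak{h}_{A_0,w(0)}\le \esssup_\Gamma w(0)\le M$.

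For $t\in(t_i^j,t_{i+1}^j]$, I would first apply \Cref{lemma:minimiser_AltCaffarelli} with $\eta=w(t_i^j)\ge 0$ and $A=A_{i-1}^j$: this immediately yields that $u_i^j$ is non-negative, locally Lipschitz continuous in $\Omega$, harmonic in the open set $\{u_i^j>0\}$, and satisfies both the pointwise bound $0\le u_i^j\le \esssup_\Gamma w(t_i^j)\le M$ (giving $(i)$) and the local Lipschitz bound $\|u_i^j\|_{C^{0,1}(\Omega')}\le C'(1+\esssup_\Gamma w(t_i^j))\le C'(1+M)$ for every $\Omega'$ well contained in $\Omega$ (giving $(iii)$), with a constant $C'$ independent of $A_{i-1}^j$. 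Note that the constant depends only on $\Omega'$ and on the uniform bound $M$, hence not on $i$ nor on $j$.

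For the uniform $H^1$-bound $(ii)$, the plan is to exploit the minimality of $u_i^j$ by testing against the admissible competitor $w(t_i^j)\in H^1_{\Gamma,w(t_i^j)}(\Omega)$. Since $\{u_i^j>0\}\subseteq A_{i-1}^j$ (which follows from the Strong Maximum Principle part of \Cref{lemma:minimiser_AltCaffarelli} combined with $\{u_i^j>0\}$ being contained in $A_i^j$ by \eqref{eq:MMS2}, and the previous step's identification), we have $\mathcal{AC}(u_i^j,A_{i-1}^j)=\tfrac12\int_\Omega|\nabla u_i^j|^2\,dx$; actually one does not even need this refinement, as simply $\tfrac12\int_\Omega|\nabla u_i^j|^2\,dx\le \mathcal{AC}(u_i^j,A_{i-1}^j)$ suffices. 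The minimality then gives
\begin{equation}
    \tfrac12\int_\Omega|\nabla u_i^j|^2\,dx\le \mathcal{AC}(w(t_i^j),A_{i-1}^j)\le \tfrac12\|\nabla w(t_i^j)\|_{L^2(\Omega)}^2+\|\kappa\|_{L^\infty(\Omega)}|\Omega|,
\end{equation}
and the right-hand side is bounded uniformly in $i,j$ by $\max_{t\in[0,T]}\|w(t)\|_{H^1(\Omega)}^2+\|\kappa\|_{L^\infty(\Omega)}|\Omega|$ thanks to the continuous embedding $AC([0,T];H^1(\Omega))\hookrightarrow C^0([0,T];H^1(\Omega))$. A Poincar\'e-type inequality on $H^1_{\Gamma,w(t_i^j)}(\Omega)$, combined with the already proved $L^\infty$-bound $(i)$ (or directly with the uniform boundedness of the trace $w(t_i^j)$ on $\Gamma$), finally upgrades the gradient estimate to the full $H^1$-bound $(ii)$.

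No step presents a real obstacle: the argument is a straightforward discrete counterpart of the proof of \Cref{prop:boundDES}, and the only mild care needed is to make sure that the constants in \Cref{lemma:minimiser_AltCaffarelli} do not depend on the (varying) obstacle set $A_{i-1}^j$, which is explicitly stated in that lemma.
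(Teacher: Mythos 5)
Your argument follows the same route as the paper, which simply observes that $u^j(t)$ minimises $\mathcal{AC}(\cdot,A^j(t-\tau_j))$ over $H^1_{\Gamma,w^j(t)}(\Omega)$ and then repeats the proof of \Cref{prop:boundDES} (apply \Cref{lemma:minimiser_AltCaffarelli}, then test against $w$ for the $H^1$ bound, then Poincar\'e). Two small points deserve attention.

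First, the parenthetical claim $\{u_i^j>0\}\subseteq A_{i-1}^j$ is false in general: the whole reason the scheme sets $A_i^j=\{u_i^j>0\}\cup A_{i-1}^j$ in \eqref{eq:MMS2} is that the positivity set of the minimiser of $\mathcal{AC}(\cdot,A_{i-1}^j)$ can and typically does overshoot $A_{i-1}^j$ (indeed \Cref{lemma:minimiser_AltCaffarelli} gives the opposite inclusion $(\inter A_{i-1}^j)^+_{\Gamma,\eta}\subseteq\{u_i^j>0\}$). You immediately note that this refinement is not needed because $\tfrac12\int_\Omega|\nabla u_i^j|^2\le\mathcal{AC}(u_i^j,A_{i-1}^j)$ always holds, so the error is harmless, but the cited reasoning (Strong Maximum Principle together with \eqref{eq:MMS2}) does not support the stated inclusion and should be dropped.

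Second, at $t=0$ the appeal to \Cref{lemma:minimiser_dirichlet} alone does not yield local Lipschitz continuity of $u^j(0)=\mathfrak{h}_{A_0,w(0)}$: that lemma only gives harmonicity inside $A_0$, boundedness, and the identification of the positivity set, and the paper explicitly warns that for a generic open set the boundary of $A_0$ may be wild enough to preclude regularity across $\partial A_0\cap\Omega$. The correct route is to use the stability assumption \eqref{eq:A0stable}: passing from measurable competitors $B\supseteq A_0$ to positivity sets $\{v>0\}$ shows that $\mathfrak{h}_{A_0,w(0)}$ satisfies condition \eqref{item:equiv4} of \Cref{lemma:equivalent}, hence is a minimiser of $\mathcal{AC}(\cdot,A_0)$ and inherits the regularity of \Cref{lemma:minimiser_AltCaffarelli}. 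With that substitution the proof is complete and aligned with the paper's.
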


\begin{proof}
By construction, $u^j(t)$ minimizes $\mathcal{AC}(\cdot ,A^j(t-\tau_j))$ in $H^1_{\Gamma,w^j(t)}(\Omega)$, so the proof can be carried out analogously to \Cref{prop:boundDES}.
\end{proof}

We will now make use of the following compactness result of Helly's type, stated and proved in~\cite[Lemma 6.1]{RivScilSol}, in order to extract convergent subsequences of $t\mapsto \chi_{A^{j}(t)}$.

\begin{lemma}\label{helly}
	Let $X,Y$ be Banach spaces such that $X$ is reflexive and separable and $X$ continuously and densely embeds in $Y$. Then, given a sequence of functions $f_n\colon [a,b]\to X$ such that for a positive constant $C>0$ independent of $n$ it holds
	\begin{equation}
		\sup\limits_{t\in [a,b]}\|f_n(t)\|_X\le C\quad\text{ and }\quad {\rm Var}_Y(f_n;a,b)\le C,
	\end{equation}
 	there exist a function $f\colon [a,b]\to X$, bounded in $X$ and with bounded variation in $Y$, and a (non-relabelled) subsequence such that
\begin{equation}\label{weakconvhelly}
	f_n(t)\xrightharpoonup[n\to +\infty]{}f(t),\quad\text{weakly in $X$ for every }t\in [a,b].
\end{equation}
\end{lemma}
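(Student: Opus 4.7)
The strategy is a classical Helly-type diagonal extraction argument combined with weak compactness of bounded sets in the reflexive space $X$. First, I would apply the classical scalar Helly selection theorem to the bounded monotone functions $V_n(t):={\rm Var}_Y(f_n;a,t)\in[0,C]$, extracting a subsequence along which $V_n(t)\to V(t)$ pointwise, with $V\colon[a,b]\to[0,C]$ non-decreasing and therefore having at most countably many discontinuity points, which I collect into a set $J\subseteq[a,b]$. I then fix a countable dense subset $D\subseteq[a,b]$ containing $J$ together with the endpoints.

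For each $t\in D$, the sequence $\{f_n(t)\}_n$ is bounded in $X$, hence by reflexivity admits weakly convergent subsequences. Enumerating $D=\{t_k\}_{k\in\mathbb{N}}$ and performing a standard diagonal extraction, I obtain a (non-relabelled) subsequence along which $f_n(t)\rightharpoonup f(t)$ weakly in $X$ for every $t\in D$, defining a map $f\colon D\to X$.

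The key step is extending the convergence to every $t\in[a,b]$. Fix $t\notin D$; in particular $V$ is continuous at $t$. Let $f_{n_k}(t)\rightharpoonup g$ be any weakly convergent subsequence in $X$ (which exists by reflexivity). Through the continuous embedding $X\hookrightarrow Y$, this convergence passes to the weak topology of $Y$, so by lower semicontinuity of the $Y$-norm and the variation bound we get, for every $s\in D$ with $s<t$,
\begin{equation}
    \|g-f(s)\|_Y\le \liminf_{k\to\infty}\|f_{n_k}(t)-f_{n_k}(s)\|_Y\le \liminf_{k\to\infty}\bigl(V_{n_k}(t)-V_{n_k}(s)\bigr)=V(t)-V(s).
\end{equation}
If $h$ is the weak limit of any other subsequence of $f_n(t)$, the same estimate yields $\|g-h\|_Y\le 2(V(t)-V(s))$, and choosing $s\in D$ with $s\uparrow t$ (exploiting the density of $D$ and continuity of $V$ at $t$) forces $g=h$ in $Y$, and therefore in $X$ by injectivity of the embedding. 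So every weakly convergent subsequence of $f_n(t)$ shares a single limit, which I denote $f(t)$, and the whole sequence converges weakly in $X$ to $f(t)$. Boundedness of $f$ in $X$ is then immediate from weak lower semicontinuity of $\|\cdot\|_X$; bounded variation in $Y$ follows by applying lower semicontinuity of $\|\cdot\|_Y$ to each increment along an arbitrary partition and summing (Fatou for finite sums).

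\textbf{Main obstacle.} The delicate step is the extension from $D$ to $[a,b]\setminus D$: weak convergence in $X$ does not upgrade to strong convergence in $Y$, so one cannot hope to identify $f(t)$ at non-dense points by a simple $Y$-continuity argument. What saves the day is that the $Y$-variation is a tight enough oscillation control, and crucially the $Y$-norm is lower semicontinuous with respect to weak convergence in $X$ (via the embedding); this is exactly what enables the uniqueness-of-weak-limit argument at continuity points of $V$, pinning down $f(t)$ without requiring any strong convergence.
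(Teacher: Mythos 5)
Your proof is correct, and it follows the standard Helly-type extraction argument that the cited reference (\cite{RivScilSol}) employs: apply the scalar Helly selection theorem to the monotone variation functions $V_n(\cdot)={\rm Var}_Y(f_n;a,\cdot)$, diagonalize over a countable dense set $D$ containing the discontinuity points of the limiting $V$, and then at continuity points $t\notin D$ identify the weak-$X$ limit by uniqueness, using weak-$Y$ lower semicontinuity of the $Y$-norm together with the variation increment $V(t)-V(s)\to 0$ as $s\uparrow t$ in $D$. All the technical points are handled correctly: the passage from weak-$X$ to weak-$Y$ convergence via $\varphi\mapsto\varphi\circ i\in X^*$ for $\varphi\in Y^*$, the injectivity of the embedding to pull the identification $g=h$ back from $Y$ to $X$, the sub-subsequence principle to conclude convergence of the whole (extracted) sequence, and the superadditivity of $\liminf$ over a finite partition to get ${\rm Var}_Y(f;a,b)\le C$.

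One observation worth making: your argument does not actually use the separability of $X$ nor the density of the embedding. Reflexivity alone yields weak sequential compactness of bounded sets (Eberlein--\v{S}mulian), and injectivity of $i\colon X\hookrightarrow Y$ suffices to transfer the uniqueness of the weak limit. These hypotheses are likely retained in the statement for convenience or for other uses in the cited work, but your proof shows they are not essential to this particular extraction.
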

\begin{prop}\label{prop:rho}
    There exists a (non-relabelled) subsequence and a map $\rho\colon [0,T]\times\Omega\to \r$ such that 
    \begin{equation}
        \chi_{A^{j}(t)} \xrightharpoonup[j\to +\infty]{} \rho(t) \quad \text{weakly$^\ast$ in } L^\infty(\Omega) \quad \text{for all } \, t \in [0, T] \,.
    \end{equation}
    In particular, $\rho$ is non-decreasing, $0\leq \rho(t)\leq 1$ for all $t \in [0, T]$ and $\rho(0)=\chi_{A_0}.$
\end{prop}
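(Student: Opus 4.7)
The plan is to apply the Helly-type compactness of \Cref{helly} directly to $f_j(t) := \chi_{A^j(t)}$. A natural choice is $X = L^2(\Omega)$ and $Y = L^1(\Omega)$, since $X$ is reflexive and separable, and embeds continuously and densely into $Y$ as $\Omega$ is bounded.

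First I would verify the two hypotheses of the lemma. The uniform sup-bound in $X$ is immediate, since $\|\chi_{A^j(t)}\|_{L^2(\Omega)} \le |\Omega|^{1/2}$ for every $t$ and $j$. For the variation bound in $Y$, I would use the property, recorded just after the definition of the piecewise-constant interpolant, that $t \mapsto A^j(t)$ is non-decreasing with respect to inclusion (a direct consequence of \eqref{eq:MMS2}). This makes $t\mapsto \chi_{A^j(t)}$ pointwise non-decreasing in $t$, so for any partition $0 = s_0 < s_1 < \dots < s_N = T$ one gets the telescoping identity
\begin{equation*}
\sum_{k=0}^{N-1} \|\chi_{A^j(s_{k+1})} - \chi_{A^j(s_k)}\|_{L^1(\Omega)} = \int_\Omega \bigl(\chi_{A^j(T)} - \chi_{A^j(0)}\bigr)\, dx \le |\Omega|,
\end{equation*}
hence a variation bound uniform in $j$. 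Applying \Cref{helly} produces a (non-relabelled) subsequence and a map $\rho \colon [0,T] \to L^2(\Omega)$ with $\chi_{A^j(t)} \rightharpoonup \rho(t)$ weakly in $L^2(\Omega)$ for every $t \in [0,T]$.

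Next I would upgrade this to weak-$\ast$ convergence in $L^\infty(\Omega)$. Since $\Omega$ has finite measure, $L^2(\Omega)$ is dense in $L^1(\Omega)$, and the uniform bound $\|\chi_{A^j(t)}\|_{L^\infty(\Omega)} \le 1$ allows a standard three-$\varepsilon$ density argument: for $g \in L^1(\Omega)$ and $g_\varepsilon \in L^2(\Omega)$ close to $g$ in $L^1$, one has
\begin{equation*}
\Bigl|\int_\Omega (\chi_{A^j(t)} - \rho(t))\,g\,dx\Bigr| \le 2\|g - g_\varepsilon\|_{L^1(\Omega)} + \Bigl|\int_\Omega (\chi_{A^j(t)} - \rho(t))\,g_\varepsilon\,dx\Bigr|,
\end{equation*}
and the second term vanishes as $j\to\infty$ by weak $L^2$ convergence. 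Hence $\chi_{A^j(t)} \todebstar \rho(t)$ in $L^\infty(\Omega)$ for every $t$.

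The three remaining properties of $\rho$ follow by testing against non-negative $g \in L^1(\Omega)$: passing $0\le \chi_{A^j(t)} \le 1$ to the weak-$\ast$ limit yields $0 \le \rho(t) \le 1$ a.e.; the pointwise inequality $\chi_{A^j(s)} \le \chi_{A^j(t)}$ for $s \le t$ gives $\rho(s) \le \rho(t)$ a.e.; and $A^j(0) = A_0$ for all $j$ gives $\rho(0) = \chi_{A_0}$ at once. I do not expect a genuine obstacle here; the only delicate point is the functional-analytic step of promoting weak $L^2$ convergence to weak-$\ast$ $L^\infty$ convergence, for which the $L^\infty$ bound on characteristic functions is the key input.
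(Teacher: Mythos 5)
Your proof is correct and follows essentially the same route as the paper: apply the Helly-type \Cref{helly} with $X=L^2(\Omega)$, $Y=L^1(\Omega)$, using the $L^\infty$ bound and the monotonicity of $t\mapsto A^j(t)$ to verify the hypotheses, and then upgrade weak $L^2$ convergence to weak-$\ast$ $L^\infty$ convergence. The paper does the upgrade by a subsequence-uniqueness (Banach--Alaoglu) argument rather than your density argument, but the two are interchangeable; the only cosmetic issue in your write-up is that the bound $\|\chi_{A^j(t)}-\rho(t)\|_{L^\infty}\le 2$ uses $\|\rho(t)\|_{L^\infty}\le 1$, which you establish afterwards, so that inequality should be observed before the density step (it follows directly from the weak $L^2$ limit).
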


\begin{proof}
    The functions $\chi_{A^j(t)}$ are clearly bounded in $L^2(\Omega)$, as they take values in $[0,1]$. Moreover, since $t\mapsto \chi_{A^j(t)}$ is monotone, it is also bounded in $\BV([0,T];L^1(\Omega))$. Using \Cref{helly}, together with a diagonal argument, we conclude that, along a non-relabelled subsequence,
    \begin{equation}
        \chi_{A^{j}(t)} \xrightharpoonup[j\to +\infty]{} \rho(t) \quad \text{weakly in } L^2(\Omega) \quad \text{for all } \, t \in [0, T] \,.
    \end{equation}
    For any $t\in[0,T],\,\chi_{A^j(t)}$ is also bounded in $L^\infty(\Omega)$. Thus, from any subsequence we can extract a further subsequence that weakly$^*$ converges in $L^\infty(\Omega)$.
    Since the limit is uniquely identified and coincides with $ \rho(t) $, the desired convergence holds without a further extraction.
\end{proof}
\begin{remark}
    Since $\rho(t)$ satisfies $1\geq \rho(t)\geq \rho(0)=\chi_{A_0}$, it follows that $\rho(t)=1$ a.e. in $A_0$ for all $t\in [0,T]$.
\end{remark}

The subsequence given in \Cref{prop:rho} now identifies a set-valued map $t \mapsto A(t)$, defined as
\begin{equation}\label{eq:definition_esssupp}
        A(t) := \Omega\setminus \operatorname{ess\ supp} (1-\rho(t))=\bigcup\{B \subseteq \Omega \colon B \text{ is open and } \rho(t) = 1 \text{ a.e. in } B\}\,.
\end{equation}
Note that, by construction, $A(t)$ is open, $A(0)=A_0$, $\rho(t) = 1$ a.e. on $A(t)$ and $t \mapsto A(t)$ is non-decreasing with respect to set inclusion.

We now show that the discrete displacements $u^j$ converge along the same subsequence to the function $\mathfrak{h}_{A(\cdot),w(\cdot)}$.
This will be done by proving that the limit function is globally stable, and exploiting \Cref{lemma:equivalent}.

\begin{prop}\label{teo:convergence_u}
    There exists a map $u\colon[0,T]\to H^1(\Omega)$ such that, for all $t\in [0,T]$, there holds
    \begin{equation}\label{eq:convergence}
        u^{j}(t) \xrightarrow[j\to +\infty]{} u(t) \quad \text{strongly in $H^1(\Omega)$, weakly$^*$ in $L^\infty(\Omega)$ and locally uniformly in $\Omega$,}
    \end{equation} 
    for the same subsequence given by \Cref{prop:rho}. Moreover, $u$ satisfies \ref{def:GS} and
    \begin{equation}\label{eq:u=uA}
        u(t)=\mathfrak{h}_{A(t),w(t)}\qquad\text{for all }t\in [0,T],
    \end{equation}
    where $A(t)$ is the set introduced in \eqref{eq:definition_esssupp}. In particular, \ref{def:CO} and \ref{def:ID} are fulfilled and $u(t)$ belongs to $H^1_{\Gamma,w(t)}(\Omega, A(t))^+\cap L^\infty(\Omega)\cap C^{0,1}_{\rm loc}(\Omega)$ for all $t\in [0,T]$. 
\end{prop}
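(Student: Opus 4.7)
The plan is to select, along the subsequence from Proposition~\ref{prop:rho}, a convergent sub-subsequence of $u^j(t)$ for every $t\in[0,T]$, identify its limit with $\mathfrak{h}_{A(t),w(t)}$ via Lemma~\ref{lemma:equivalent}, and conclude that the full sequence converges by uniqueness of the limit.

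\emph{Compactness and support.} By Proposition~\ref{prop:bound}, $\{u^j(t)\}_j$ is uniformly bounded in $H^1(\Omega)\cap L^\infty(\Omega)$ and equi-Lipschitz on each compact subset of $\Omega$. For each $t$, any subsequence admits a further one converging weakly in $H^1$, weakly$^*$ in $L^\infty$, and locally uniformly (by Ascoli--Arzelà) to some non-negative, locally Lipschitz function $u(t)$. Since $u^j(t)(1-\chi_{A^j(t)})\equiv 0$, Rellich compactness combined with the weak$^*$ convergence $\chi_{A^j(t)}\rightharpoonup\rho(t)$ yields $u(t)(1-\rho(t))=0$ a.e. Continuity of $u(t)$ then forces $\{u(t)>0\}$ to be an open set on which $\rho(t)=1$ a.e., hence $\{u(t)>0\}\subseteq A(t)$ by maximality in \eqref{eq:definition_esssupp}; thus $u(t)\in H^1_{\Gamma,w(t)}(\Omega,A(t))^+$.

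\emph{Limit stability and identification.} The minimality \eqref{eq:MMS1} yields, for every $v\in H^1_{\Gamma,w(t_i^j)}(\Omega)$ and $t\in(t_i^j,t_{i+1}^j]$,
\begin{equation}
    \tfrac{1}{2}\int_\Omega|\nabla u^j(t)|^2\,dx \leq \tfrac{1}{2}\int_\Omega|\nabla v|^2\,dx+\int_{\{v>0\}\setminus A^j(t-\tau_j)}\kappa\,dx.
\end{equation}
By condition \eqref{item:equiv4} in Lemma~\ref{lemma:equivalent}, it suffices to verify the limit stability for $v\in H^1_{\Gamma,w(t)}(\Omega)^+\cap C^0(\Omega)$ with $A(t)\subseteq\{v>0\}$. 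For such $v$, take $v_j:=v+w(t_i^j)-w(t)\in H^1_{\Gamma,w(t_i^j)}(\Omega)$, which converges to $v$ strongly in $H^1$ by absolute continuity of $w$. Weak lower semicontinuity bounds the left-hand side, $\tfrac{1}{2}\int|\nabla v_j|^2$ passes strongly, and for the toughness term, for $j$ large one has $v_j>0$ on compact subsets of $A(t)$ (by locally uniform convergence and strict positivity of $v$ on $A(t)$); hence the weak$^*$ convergence $\chi_{\Omega\setminus A^j(t-\tau_j)}\rightharpoonup 1-\rho(t)$ tested against $\chi_{\{v>0\}}\kappa\in L^1(\Omega)$, combined with $\rho(t)=1$ a.e. on $A(t)$, gives $\limsup_j\int_{\{v_j>0\}\setminus A^j(t-\tau_j)}\kappa\,dx \le \int_{\{v>0\}\setminus A(t)}\kappa\,dx$. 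Lemma~\ref{lemma:equivalent} then forces $u(t)=\mathfrak{h}_{A(t),w(t)}$, uniquely determining the limit.

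\emph{Conclusion.} Since $\{u(s)>0\}\subseteq A(s)\subseteq A(t)$ for all $s\le t$ and $A_0\subseteq A(t)$, one has $A_{u(t)}\subseteq A(t)$, so the stability with $A(t)$ implies the weaker \ref{def:GS}. Strong $H^1$-convergence is obtained by testing the discrete inequality against a perturbation of $u(t)$, yielding $\limsup_j\int|\nabla u^j(t)|^2\leq\int|\nabla u(t)|^2$, which combined with weak lower semicontinuity upgrades to strong convergence. Properties \ref{def:CO} and \ref{def:ID} are then immediate from the construction. The main obstacle is controlling the toughness integral in the limit, since the positivity sets $\{v_j>0\}$ do not behave well under weak$^*$ convergence of $\chi_{A^j(t-\tau_j)}$; the reduction \eqref{item:equiv4} to continuous positive competitors with $A(t)\subseteq\{v>0\}$ is essential, as it forces $v_j>0$ on $A^j(t-\tau_j)$ for large $j$ and allows the weak$^*$ limit to be evaluated against the fixed $L^1$-function $\chi_{\{v>0\}}\kappa$.
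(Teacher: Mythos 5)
Your outline (compactness, support inclusion, limit stability via condition~\eqref{item:equiv4}, identification with $\mathfrak{h}_{A(t),w(t)}$, strong $H^1$-convergence) matches the paper's, but the central step --- passing to the limit in the toughness term --- has a genuine gap. You choose the competitor $v_j := v + w(t_i^j) - w(t)$, which attains the right boundary data but does \emph{not} coincide with $v$ away from $\Gamma$. Two of your claims about it fail: (a) ``$v_j > 0$ on compact subsets of $A(t)$ by locally uniform convergence'' is not available, since $w(t_i^j)\to w(t)$ only in $H^1(\Omega)$ and not locally uniformly; and (b) ``$A(t)\subseteq\{v>0\}$ forces $v_j>0$ on $A^j(t-\tau_j)$'' is false, since $A^j(t-\tau_j)$ is not contained in $A(t)$ (or anywhere near $\{v>0\}$): the discrete sets converge to $A(t)$ only weakly$^*$ through their indicator functions, not set-theoretically. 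Consequently $\{v_j>0\}\setminus A^j(t-\tau_j)$ cannot be controlled by $\{v>0\}\setminus A^j(t-\tau_j)$, and your bound on $\limsup_j\int_{\{v_j>0\}\setminus A^j(t-\tau_j)}\kappa\,dx$ does not follow.

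The paper sidesteps both obstacles with two ingredients your argument omits. First, it uses the competitor $v^n(t) := v - \overline w(t) + \overline w^{j_n}(t)$, where $\overline w$ comes from assumption \eqref{eq:safeassumption} and hence vanishes outside $A_0$; since $A_0\subseteq A^{j_n}(t-\tau_{j_n})$, this gives the \emph{exact} identity $\{v^n(t)>0\}\setminus A^{j_n}(t-\tau_{j_n}) = \{v>0\}\setminus A^{j_n}(t-\tau_{j_n})$, eliminating the perturbed positivity set entirely. This is precisely where \eqref{eq:safeassumption} enters the existence proof, and a competitor built from $w$ alone cannot replicate it. Second, even after this exact substitution, one still faces the issue you also gloss over: $\chi_{A^{j}(t-\tau_j)}$ need not converge to $\rho(t)$ (its weak$^*$ limit points are pinched between $\rho(t^-)$ and $\rho(t)$, but \Cref{prop:rho} only controls $\chi_{A^j(t)}$). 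The paper resolves this by adding $\int_{A^{j_n}(t-\tau_{j_n})}\kappa\,dx$ to both sides of the minimality inequality and using $(A\setminus B)\cup B = A\cup B$ together with the inclusions $A^{j_n}(t-\tau_{j_n})\subseteq A^{j_n}(t)$ and $A(t)\subseteq\{v>0\}$, so that every $\kappa$-term on both sides is ultimately expressed through $\chi_{A^{j_n}(t)}$, to which \Cref{prop:rho} does apply. Without this rewriting, the limit passage does not close. The same two devices are needed to obtain the strong $H^1$-upgrade; your last paragraph relies on the same unjustified limit passage and therefore inherits the gap.
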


\begin{proof}
Fix $t\in [0,T]$. By \Cref{prop:bound} we deduce that, up to taking a further subsequence $j_n$ (possibly depending on $t$), the convergence stated in \eqref{eq:convergence} holds true for a function $u(t)\in H^1_{\Gamma,w(t)}(\Omega)^+\cap L^\infty(\Omega)\cap C^{0,1}_{\rm loc}(\Omega)$, except that the convergence in $H^1(\Omega)$ is just weak. We now aim at showing the characterization \eqref{eq:u=uA}, which would yield \ref{def:ID} and the validity of \eqref{eq:convergence} without taking any subsequence.

Let us first prove that
\begin{equation}\label{eq:claimcontained}
    \{u(t)>0\}\subseteq A(t).
\end{equation}
Fix $\bar x\in\{u(t)>0\}$, so that by continuity of $u(t)$ and by the locally uniform convergence of $u^{j_n}(t)$ to $u(t)$ there exists a radius $r>0$ such that for sufficiently large $n$ one has $u^{j_n}(t,x)\ge u(t,\bar x)/2$ for all $x\in B_r(\bar x)$, whence $B_r(\bar x)\subseteq \{u^{j_n}(t)>0\}\subseteq A^{j_n}(t)$. This implies that
\begin{equation}
    \int_{B_r(\bar x)}\rho(t)\, dx=\lim\limits_{n\to +\infty}\int_\Omega \chi_{B_r(\bar x)} \chi_{A^{j_n}(t)}\, dx= \lim\limits_{n\to +\infty}\int_\Omega \chi_{B_r(\bar x)} \, dx=|B_r(\bar x)|,
\end{equation}
and so $\rho(t)=1$ a.e. in $B_r(\bar x)$. By definition \eqref{eq:definition_esssupp} of $A(t)$ we thus obtain that $B_r(\bar x)\subseteq A(t)$ and so \eqref{eq:claimcontained} is proved. In particular, we deduce that $u(t)\in H^1_{\Gamma,w(t)}(\Omega, A(t))^+$, and that
\begin{equation}\label{eq:AuinA}
    A_{u(t)}=A_0\cup\bigcup_{s\in [0,t]}\{u(s)>0\}\subseteq A_0\cup\bigcup_{s\in [0,t]}A(s)=A(t).
\end{equation}

In order to prove \ref{def:GS}, we now fix $v\in H^1_{\Gamma,w(t)}(\Omega)^+\cap C^0(\Omega)$ such that $A(t)\subseteq \{v>0\}$, and we consider the competitor $v^n(t):=v-\overline w(t)+\overline w^{j_n}(t)\in H^1_{\Gamma,w^{j_n}(t)}(\Omega)$. We recall that by \Cref{remark:convergence_wj} we have
\begin{equation}\label{eq:strongconv}
    v^n(t)\to v,\quad \text{strongly in }H^1(\Omega).
\end{equation}
By minimality of $u^j_i$ in \eqref{eq:MMS1} and by definition \eqref{eq:MMS2} of $A^j_i$ we thus infer
\begin{align}
    & \frac 12 \int_\Omega |\nabla u^{j_n}(t)|^2 \, dx+\int_{A^{j_n}(t)\setminus A^{j_n}(t-\tau_{j_n})} \kappa \, dx\\
    &\quad = \frac 12 \int_\Omega |\nabla u^{j_n}(t)|^2 \, dx+\int_{\{u^{j_n}(t)>0\}\setminus A^{j_n}(t-\tau_{j_n})} \kappa \, dx\\
    &\quad \le \frac 12 \int_\Omega |\nabla v^{n}(t)|^2 \, dx+\int_{\{v^{n}(t)>0\}\setminus A^{j_n}(t-\tau_{j_n})} \kappa \, dx= \frac 12 \int_\Omega |\nabla v^{n}(t)|^2 \, dx+\int_{\{v>0\}\setminus A^{j_n}(t-\tau_{j_n})} \kappa \, dx,
\end{align}
where in the last equality we exploited the identity
\begin{equation}
    \{v^{n}(t)>0\}\setminus A^{j_n}(t-\tau_{j_n})=\{v>0\}\setminus A^{j_n}(t-\tau_{j_n})
\end{equation}
which follows from \eqref{eq:safeassumption}, since $v^n(t)=v$ outside $A_0$, ando so in particular outside $A^{j_n}(t-\tau_{j_n})$.

By adding the term
\begin{equation}
    \int_{A^{j_n}(t-\tau_{j_n})} \kappa \,d x
\end{equation}
to both sides above and using the trivial identity $(A\setminus B)\cup B= (B\setminus A)\cup A=A\cup B$ we finally obtain
\begin{align}
     \frac 12 \int_\Omega |\nabla u^{j_n}(t)|^2 \, dx+\int_{A^{j_n}(t)} \kappa \, dx&\le \frac 12 \int_\Omega |\nabla v^{n}(t)|^2 \, dx+\int_{\{v>0\}} \kappa \, dx+ \int_{ A^{j_n}(t-\tau_{j_n})\setminus\{v>0\}} \kappa \, dx\\
     &\le \frac 12 \int_\Omega |\nabla v^{n}(t)|^2 \, dx+\int_{\{v>0\}} \kappa \, dx+ \int_{ A^{j_n}(t)\setminus A(t)} \kappa \, dx,
\end{align}
where in the last inequality we exploited the inclusions $ A^{j_n}(t-\tau_{j_n})\subseteq  A^{j_n}(t)$ and $A(t)\subseteq \{v>0\}$.

Sending $n\to +\infty$, by weak lower semicontinuity of the Dirichlet functional and exploiting \eqref{eq:strongconv} and \Cref{prop:rho} we now infer
\begin{align}
    \frac 12 \int_\Omega |\nabla u(t)|^2 \, dx+\int_\Omega \kappa \rho(t)\, dx&\le \frac 12 \int_\Omega |\nabla v|^2 \, dx+\int_{\{v>0\}} \kappa \, dx+\int_\Omega \kappa \rho(t)(1-\chi_{A(t)})\, dx\\
    &=\frac 12 \int_\Omega |\nabla v|^2 \, dx+\int_{\{v>0\}\setminus A(t)} \kappa \, dx+\int_\Omega \kappa \rho(t)\, dx,
\end{align}
where we used the equality $\rho(t)=1$ a.e. on $A(t)$. We have thus proved the validity of \eqref{item:equiv4} in \Cref{lemma:equivalent}, which in particular yields \eqref{eq:u=uA} and also \ref{def:GS} recalling \eqref{eq:AuinA}.

We are just left to prove that $u^{j}\to u(t)$ strongly in $H^1(\Omega)$.
Arguing similarly as before, but taking as a competitor for $u_i^j$ the function $v^j(t)=u(t)-\overline w(t)+\overline w^{j}(t)\in H^1_{\Gamma,w^{j}(t)}(\Omega)$, we deduce
\begin{equation}
    \frac 12 \int_\Omega |\nabla u^{j}(t)|^2 \, dx+\int_{A^{j}(t)} \kappa \, dx\le \frac 12 \int_\Omega |\nabla v^{j}(t)|^2 \, dx+\int_{\{u(t)>0\}} \kappa \, dx+ \int_{ A^{j}(t)\setminus\{u(t)>0\}} \kappa \, dx.
\end{equation}
Sending $j\to +\infty$, exploiting \eqref{eq:claimcontained} together with the fact that $\rho(t)=1$ a.e. in $A(t)$ we thus obtain
\begin{align}
   &\limsup\limits_{j\to +\infty}\frac 12 \int_\Omega |\nabla u^{j}(t)|^2 \, dx+\int_\Omega \kappa\rho(t) \, dx\\
   &\quad \le \frac 12 \int_\Omega |\nabla u(t)|^2 \, dx+\int_{\{u(t)>0\}} \kappa \, dx+ \int_\Omega \kappa\rho(t) (1-\chi_{\{u(t)>0\}}) \, dx\\
   &\quad =\frac 12 \int_\Omega |\nabla u(t)|^2 \, dx+\int_\Omega \kappa\rho(t) \, dx,
\end{align}
and so we conclude.
\end{proof}

Due to \Cref{prop:upperineq}, in order to conclude the proof of \Cref{thm:safe} we just need to show that $u$ satisfies the opposite inequality of \eqref{eq:upperineq}. This will be done in the following proposition. Indeed, notice that assumption \eqref{eq:safeassumption} automatically yields that $A_{u(t)}\in \mathcal O_{\Gamma,\dot w(t)}$ for a.e. $t\in [0,T]$   and that
    \begin{equation}
        \int_\Omega \nabla \dot{\overline w}(t)\cdot \nabla u(t) \, dx=\int_\Omega \nabla \mathfrak{h}_{A_{u(t)},\dot w(t)}\cdot \nabla u(t) \, dx \quad\text{for a.e. $t\in [0,T]$}.
    \end{equation}
Also, observe that the left-hand side above is measurable since it is the pointwise countable limit of measurable functions.

\begin{prop}
    The limit function $u$ obtained in \Cref{teo:convergence_u} satisfies the inequality
    \begin{equation}
            \frac 12 \int_\Omega |\nabla u(t)|^2 dx+\int_{A_{u(t)}\setminus A_0}\kappa\,d x\le   \frac 12 \int_\Omega |\nabla u(0)|^2 dx+\int_0^t \int_\Omega \nabla \dot {\overline w}(\tau)\cdot \nabla u(\tau) \, dx\, d\tau
    \end{equation}
    for all $t\in [0,T]$.
\end{prop}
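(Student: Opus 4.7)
The plan is to derive the sought inequality by passing to the limit in the natural discrete energy inequality that the Minimizing Movements scheme produces. At the level of a single time-step, I would exploit the discrete minimality \eqref{eq:MMS1} by testing it against the tilted competitor $v^j_i := u^j_{i-1} + \overline w(t^j_i) - \overline w(t^j_{i-1})$, which lies in $H^1_{\Gamma,w(t^j_i)}(\Omega)$ and, by assumption \eqref{eq:safeassumption} together with the fact that $A_0 \subseteq A^j_{i-1}$, vanishes outside $A^j_{i-1}$. This last property is the key point: it implies $\{v^j_i>0\} \setminus A^j_{i-1} = \emptyset$, so $\mathcal{AC}(v^j_i, A^j_{i-1})$ reduces to the pure Dirichlet energy of $v^j_i$. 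Expanding the square one obtains
\begin{equation}
\frac 12\!\int_\Omega |\nabla u^j_i|^2 dx + \!\int_{A^j_i \setminus A^j_{i-1}} \!\!\kappa\,dx \le \frac 12\!\int_\Omega |\nabla u^j_{i-1}|^2 dx + \!\int_{t^j_{i-1}}^{t^j_i}\!\!\int_\Omega\! \nabla u^j_{i-1}\cdot \nabla\dot{\overline w}(\tau) dx\,d\tau + r^j_i,
\end{equation}
with $r^j_i := \tfrac12\|\nabla \overline w(t^j_i) - \nabla \overline w(t^j_{i-1})\|^2_{L^2(\Omega)}$.

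Next I would sum these inequalities from $i=1$ up to $i_t$, where $t\in(t^j_{i_t-1}, t^j_{i_t}]$. The left-hand side telescopes cleanly to $\tfrac12\int_\Omega |\nabla u^j(t)|^2\,dx + \int_{A^j(t)\setminus A_0} \kappa\,dx$, while the linear remainder assembles into $\int_0^{t^j_{i_t}} \int_\Omega \nabla u^j(\tau-\tau_j)\cdot \nabla \dot{\overline w}(\tau)\, dx\,d\tau$ (with the convention $u^j(s) := u^j_0$ for $s\le 0$). The quadratic remainder $R^j := \sum_i r^j_i$ is controlled by the standard estimate
\begin{equation}
R^j \le \frac 12 \sup_{i=1,\dots,i_t}\Big(\int_{t^j_{i-1}}^{t^j_i} \|\nabla \dot{\overline w}(\tau)\|_{L^2(\Omega)}d\tau\Big)\cdot \int_0^T \|\nabla \dot{\overline w}(\tau)\|_{L^2(\Omega)}d\tau,
\end{equation}
which vanishes as $j\to\infty$ by the absolute continuity of the integral, since $\dot{\overline w}\in L^1(0,T;H^1(\Omega))$.

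The final step is the passage to the limit $j\to\infty$ along the subsequence of \Cref{teo:convergence_u} and \Cref{prop:rho}. For the gradient term on the left-hand side I use the strong $H^1$-convergence $u^j(t)\to u(t)$ proved in \Cref{teo:convergence_u}. For the dissipation term, the weak$^\ast$ convergence $\chi_{A^j(t)}\rightharpoonup^\ast\rho(t)$ gives $\int_{A^j(t)\setminus A_0}\kappa\,dx \to \int_\Omega \rho(t)(1-\chi_{A_0})\kappa\,dx$, and this limit bounds $\int_{A_{u(t)}\setminus A_0}\kappa\,dx$ from above, because $A_{u(t)}\subseteq A(t)$ (as shown in \eqref{eq:AuinA}) and $\rho(t)=1$ a.e.\ on $A(t)$, so $\rho(t)\ge \chi_{A_{u(t)}}$ a.e. The small overshoot $\int_t^{t^j_{i_t}} \int_\Omega \nabla u^j(\tau-\tau_j)\cdot \nabla \dot{\overline w}(\tau)\,dx\,d\tau$ is negligible because $t^j_{i_t}-t\le \tau_j\to 0$ and the integrand is dominated by $C\|\nabla\dot{\overline w}(\tau)\|_{L^2(\Omega)}\in L^1(0,T)$.

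The only delicate point is the convergence of the shifted work integral $\int_0^t\int_\Omega \nabla u^j(\tau-\tau_j)\cdot \nabla \dot{\overline w}(\tau)\,dx\,d\tau$ to $\int_0^t\int_\Omega \nabla u(\tau)\cdot \nabla \dot{\overline w}(\tau)\,dx\,d\tau$. I would handle it by the change of variable $s=\tau-\tau_j$ combined with translation continuity: after the substitution, the integrand factorizes as $\nabla u^j(s)\cdot \nabla\dot{\overline w}(s+\tau_j)$, and strong convergence $\nabla\dot{\overline w}(\cdot+\tau_j)\to \nabla\dot{\overline w}$ in $L^1(0,T;L^2(\Omega))$ together with the uniform $H^1$-bound on $u^j$ from \Cref{prop:bound} and the pointwise strong $H^1$-convergence $u^j(s)\to u(s)$ allow for a straightforward dominated convergence argument, with the integral on $[-\tau_j,0]$ vanishing because $\dot{\overline w}\in L^1(0,T;H^1(\Omega))$. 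This is the step I expect to require the most care, essentially because the bound on $\|u^j(\tau-\tau_j)-u^j(\tau)\|_{H^1(\Omega)}$ is not small; the shift must be treated globally in time rather than pointwise.
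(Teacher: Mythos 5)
Your proposal follows the paper's argument essentially step by step: same competitor $v^j_i = u^j_{i-1} + \overline w(t^j_i) - \overline w(t^j_{i-1})$, same use of assumption \eqref{eq:safeassumption} to make the dissipation term in $\mathcal{AC}(v^j_i, A^j_{i-1})$ vanish, same telescoping and remainder control, same limit passage via \Cref{teo:convergence_u} and \Cref{prop:rho}. The only differences are cosmetic: you expand the square and bound the quadratic remainder $\sum_i r^j_i$ directly, whereas the paper rewrites the same quantity via the fundamental theorem of calculus in time and splits off two remainders $R_1^j$, $R_2^j$ — the resulting estimates are equivalent. The point you flag as the "most delicate" (convergence of the shifted work integral) is in fact not delicate at all: the paper's piecewise-constant interpolant \eqref{piece} is the left-endpoint one, i.e.\ $u^j(\tau)=u^j_{i-1}$ for $\tau\in(t^j_{i-1},t^j_i]$, so what you denote $u^j(\tau-\tau_j)$ is precisely the paper's $u^j(\tau)$, and the pointwise strong $H^1$ convergence of \Cref{teo:convergence_u} applies directly, no translation-continuity argument needed. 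Your fallback argument (change of variable plus $L^1(0,T;L^2)$ translation continuity of $\nabla\dot{\overline w}$, plus dominated convergence dominated by $C\|\nabla\dot{\overline w}(\tau)\|_{L^2}$) is also correct, just longer than necessary.
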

\begin{proof}
    If $t = 0$, the inequality is trivial since $A_{u(0)}=A_0$. So fix $t \in (0, T]$ and for $j\in \n$ let $I_t^j\in\{1,\dots, j\}$ satisfy $u^j(t)=u^j_{I_t^j}$. For all $i\in \{1,\dots, I^j_t\}$, by taking as a competitor for $u^j_i$ the function $v=u^j_{i-1}+\overline w(t^j_i)-\overline w(t^j_{i-1})\in H^1_{\Gamma,w(t^j_i)}(\Omega)$ we deduce
    \begin{align}
        \frac{1}{2}\int_\Omega |\nabla u^j_{i}|^2 \, dx + \int_{A^j_i\setminus A^j_{i-1}}\kappa\,dx&=\frac{1}{2}\int_\Omega |\nabla u^j_{i}|^2 \, dx + \int_{\{u^j_i>0\}\setminus A^j_{i-1}}\kappa\,dx \\
        &\leq \frac{1}{2}\int_\Omega |\nabla u^j_{i-1}+\nabla (\overline w(t^j_i)-\overline w(t^j_{i-1}))|^2 \, dx + \underbrace{\int_{\{u^j_{i-1}>0\}\setminus A^j_{i-1}}\kappa \,dx}_{=0},
    \end{align}
    where in the last inequality we exploited the assumption $\overline w(t)=0$ outside $A_0$.

    Subtracting
    \begin{equation}
        \frac 12 \int_\Omega |\nabla u^j_{i-1}|^2 \, dx
    \end{equation}
    from both sides and summing from $i=1, \dots, I^j_t$ we thus obtain
    \begin{align}
        &\frac 12 \int_\Omega |\nabla u^j(t)|^2 dx-\frac 12 \int_\Omega |\nabla u(0)|^2 dx+\int_{A^j(t)\setminus A_0}\kappa\,d x\\
        & \quad \le \int_0^{t^j_{I^j_t}} \,\int_\Omega (\nabla u^j(\tau)+\nabla (\overline w(\tau)-\overline w^j(\tau)))\cdot \nabla \dot{\overline w}(\tau) dxd\tau\\
        &\quad = \int_0^t \int_\Omega \nabla u^j(\tau)\cdot \nabla \dot{\overline w}(\tau)\, dxd\tau-\underbrace{\int_{t^j_{I^j_t}}^t \int_\Omega \nabla u^j(\tau)\cdot \nabla \dot{\overline w}(\tau)\, dxd\tau}_{=: R_1^j}\\
        &\quad \hphantom{=}\,\, +\underbrace{\int_0^{t^j_{I^j_t}} \,\int_\Omega \nabla (\overline w(\tau)-\overline w^j(\tau))\cdot \nabla \dot{\overline w}(\tau) dxd\tau}_{=: R_2^j}.
    \end{align}
Let us first show that both $R_1^j$ and $R_2^j$ vanish as $j\to +\infty$. It is a byproduct of the following estimates, recalling \Cref{remark:convergence_wj}, the bounds of \Cref{prop:bound} and that $\lim\limits_{j\to +\infty} t^j_{I^j_t}=t$.
\begin{align}
    |R_1^j| & \le \int_{t^j_{I^j_t}}^t \|\nabla u^j(\tau)\|_{L^2(\Omega)} \|\nabla \dot{\overline w}(\tau)\|_{L^2(\Omega)} \, d\tau\le \sup\limits_{s\in [0,T]}\| u^j(s)\|_{H^1(\Omega)}\int_{t^j_{I^j_t}}^t \| \dot{\overline w}(\tau)\|_{H^1(\Omega)}\, d\tau;\\
    |R_2^j| & \le\! \int_0^t \!\! \|\nabla (\overline w(\tau){-}\overline w^j(\tau))\|_{L^2(\Omega)}\|\nabla \dot{\overline w}(\tau)\|_{L^2(\Omega)}\, d\tau\le \!\sup\limits_{s\in [0,T]}\| \overline w(s){-}\overline w^j(s)\|_{H^1(\Omega)}\!\int_0^t \!\!\| \dot{\overline w}(\tau)\|_{H^1(\Omega)}\, d\tau.
\end{align}
Sending $j\to +\infty$ we thus obtain
\begin{align}
    \frac 12 \int_\Omega |\nabla u(t)|^2 dx-\frac 12 \int_\Omega |\nabla u(0)|^2 dx+\int_{\Omega}\kappa\rho(t)(1-\chi_{A_0})\,d x\le \int_0^t \int_\Omega  \nabla \dot{\overline w}(\tau)\cdot \nabla u(\tau)\, dxd\tau. 
\end{align}
Observing that, using \eqref{eq:AuinA}, we have
\begin{equation}
    \int_{\Omega}\kappa\rho(t)(1-\chi_{A_0})\,d x\ge \int_{A(t)}\kappa\rho(t)(1-\chi_{A_0})\,d x=\int_{A(t)\setminus A_0}\kappa\,d x\ge \int_{A_{u(t)}\setminus A_0}\kappa\,d x,
\end{equation}
we finally conclude.            
\end{proof}

\subsection{Proof of \Cref{thm:general}}
For $\varepsilon>0$, define the tubular neighbourhood
\begin{equation}
    \Gamma^\varepsilon:=\{x\in \Omega :\, \operatorname{dist}(x;\Gamma)<\varepsilon\}
\end{equation}
and set $A_0^\varepsilon:= A_0\cup \Gamma^\varepsilon$. Since $\Gamma^\varepsilon$ is open, there exists a cut-off function $\Phi^\varepsilon\in C^1(\overline\Omega)$ such that $0\le \Phi^\varepsilon\le 1$ in $\overline{\Omega}$, $\Phi^\varepsilon=1$ in $\Gamma$, $\Phi^\varepsilon=0$ in $\Omega\setminus \Gamma^\varepsilon$ and 
\begin{equation}\label{eq:nablaphi}
    |\nabla \Phi^\varepsilon(x)|\le \frac{C}{\varepsilon}\qquad\text{for all }x\in \overline{\Omega}.
\end{equation}
Thus, the function $w^\varepsilon(t,x):=\Phi^\varepsilon(x) w(t,x)$ satisfies \eqref{eq:safeassumption} with $A_0^\varepsilon$ in place of $A_0$, and so by \Cref{thm:safe} and \Cref{rmk:nostable} (indeed, $A_0^\varepsilon$ may be not stable) there exists a map $t\mapsto u^\varepsilon(t)$ which satisfies:
\begin{enumerate}[leftmargin = !, labelwidth=1.2cm, align = left]
        \item [{\crtcrossreflabel{\textup{(CO)}$^\varepsilon$}[def:COeps]}] $u^\varepsilon(t)\in H^1_{\Gamma,w(t)}(\Omega)\cap C^0(\Omega)$ for all $t\in (0,T]$;
        \item[{\crtcrossreflabel{\textup{(ID)}$^\varepsilon$}[def:IDeps]}] $u^\varepsilon(0)=\mathfrak{h}_{A^\varepsilon_0,w(0)}$;
        \item[{\crtcrossreflabel{\textup{(GS)}$^\varepsilon$}[def:GSeps]}] for all $t\in (0,T]$ there holds
        \begin{equation}
            \frac 12 \int_\Omega |\nabla u^\varepsilon(t)|^2 dx\le \frac 12 \int_\Omega |\nabla v|^2 dx+\int_{\{v>0\}\setminus A_{u^\varepsilon(t)}}\kappa\, dx,\quad \text{for all }v\in H^1_{\Gamma,w(t)}(\Omega);
        \end{equation}
        \item[{\crtcrossreflabel{\textup{(EI)}$^\varepsilon$}[def:EIeps]}] for all $t\in [0,T]$ there holds
        \begin{equation}
            \frac 12 \int_\Omega |\nabla u^\varepsilon(t)|^2 dx+\int_{A_{u^\varepsilon(t)}\setminus A^\varepsilon_{0}}\kappa\,d x \le   \frac 12 \int_\Omega |\nabla u^\varepsilon(0)|^2 dx+\int_0^t \int_\Omega \nabla \dot w^\varepsilon(\tau)\cdot \nabla u^\varepsilon(\tau) \, dx\, d\tau.
        \end{equation}
    \end{enumerate}
    From \Cref{lemma:equivalent}, we also recall that $u^\varepsilon(t)=\mathfrak{h}_{A_{u^\varepsilon(t)},w(t)}$ for all $t\in [0,T]$.

    By exploiting again \Cref{helly}, as already done in \Cref{prop:rho}, we directly infer the existence of a (non relabelled) subsequence and a map $\sigma\colon [0,T]\times\Omega\to \r$ such that 
    \begin{equation} \label{eq.sigma}        \chi_{{A_{u^\varepsilon(t)}}} \xrightharpoonup[\varepsilon\to 0]{} \sigma(t) \quad \text{ weakly$^*$ in } L^\infty(\Omega) \quad \text{for all } \, t \in [0, T] \,.    \end{equation}
    In particular, $\sigma$ is non-decreasing, $0\leq \sigma(t)\leq 1$ for all $t \in [0, T]$ and $\sigma(0)=\chi_{A_0}.$ Analogously to \eqref{eq:definition_esssupp}, we now introduce the set
\begin{equation}\label{eq:Asigma}
        A(t) := \Omega\setminus \operatorname{ess\ supp} (1-\sigma(t)),
\end{equation}
so that $t\mapsto A(t)$ is non-decreasing, $A(0)=A_0$ and for all $t\in [0,T]$ the set $A(t)$ is open and $\sigma(t)=1$ a.e. in $A(t)$. The following proposition concludes the proof of Theorem \ref{thm:general}.

\begin{prop}\label{teo:convergence_ueps}
    There exists a map $u\colon[0,T]\to H^1(\Omega)$ such that for $t=0$ there holds
     \begin{equation}\label{eq:convergenceeps0}
        u^{\varepsilon}(0) \xrightarrow[\varepsilon\to 0]{} u(0) \quad \text{strongly in $H^1(\Omega)$ and weakly$^*$ in $L^\infty(\Omega)$,}
    \end{equation}
    while for $t\in (0,T]$ one has
    \begin{equation}\label{eq:convergenceeps}
        u^{\varepsilon}(t) \xrightarrow[\varepsilon\to 0]{} u(t) \quad \text{strongly in $H^1(\Omega)$, weakly$^*$ in $L^\infty(\Omega)$ and locally uniformly in $\Omega$,}
    \end{equation} 
    for the same subsequence satisfying \eqref{eq.sigma}. Moreover, $u$ satisfies \ref{def:GS} and
    \begin{equation}\label{eq:u=uAeps}
        u(t)=\mathfrak{h}_{A(t),w(t)}\qquad\text{for all }t\in [0,T],
    \end{equation}
    where $A(t)$ is the set introduced in \eqref{eq:Asigma}. In particular, \ref{def:CO} and \ref{def:ID} are fulfilled and $u(t)$ belongs to $H^1_{\Gamma,w(t)}(\Omega, A(t))^+\cap L^\infty(\Omega)\cap C^{0,1}_{\rm loc}(\Omega)$ for all $t\in [0,T]$. 
\end{prop}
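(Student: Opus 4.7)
The strategy closely mirrors that of \Cref{teo:convergence_u}, the main difference being that the approximating sequence is now indexed by $\varepsilon$ rather than by a discretization parameter, and the boundary datum $w$ is fixed (not piecewise-constant), which actually simplifies the argument. First I would observe that the uniform bounds in \Cref{prop:bound} apply verbatim to $u^\varepsilon(t)$ for each $t\in [0,T]$, since by \ref{def:GSeps} the function $u^\varepsilon(t)$ is an outward minimiser of the Alt--Caffarelli functional with boundary datum $w(t)$. Consequently, along a further subsequence depending a priori on $t$, one has $u^\varepsilon(t)\to u(t)$ weakly in $H^1(\Omega)$, weakly$^*$ in $L^\infty(\Omega)$, and locally uniformly in $\Omega$, for some function $u(t)\in H^1_{\Gamma,w(t)}(\Omega)^+\cap L^\infty(\Omega)\cap C^{0,1}_{\mathrm{loc}}(\Omega)$. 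The goal is to identify the limit as $u(t)=\mathfrak{h}_{A(t),w(t)}$ with $A(t)$ given by \eqref{eq:Asigma}, which removes any dependence on the subsequence.

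For the initial time $t=0$, I would exploit the stability \eqref{eq:A0stable} of $A_0$. Since $A_0\subseteq A_0^\varepsilon$, the function $\mathfrak{h}_{A_0,w(0)}$ is admissible for the minimisation defining $u^\varepsilon(0)=\mathfrak{h}_{A_0^\varepsilon,w(0)}$, yielding
\[
\tfrac12\int_\Omega |\nabla u^\varepsilon(0)|^2\,dx\le \tfrac12\int_\Omega |\nabla \mathfrak{h}_{A_0,w(0)}|^2\,dx.
\]
Conversely, testing \eqref{eq:A0stable} against $B=A_0^\varepsilon$ gives
\[
\tfrac12\int_\Omega |\nabla \mathfrak{h}_{A_0,w(0)}|^2\,dx\le \tfrac12\int_\Omega |\nabla u^\varepsilon(0)|^2\,dx+\int_{\Gamma^\varepsilon\setminus A_0}\kappa\,dx,
\]
and the last term vanishes as $\varepsilon\to 0$ since $|\Gamma^\varepsilon|\to 0$. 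Combining the two inequalities yields convergence of the Dirichlet norms, whence the strong $H^1$ convergence stated in \eqref{eq:convergenceeps0} with $u(0)=\mathfrak{h}_{A_0,w(0)}$, which also proves \ref{def:ID}.

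For $t\in (0,T]$, I would first show the inclusion $\{u(t)>0\}\subseteq A(t)$: any $\bar x\in\{u(t)>0\}$ admits, by continuity and local uniform convergence, a ball $B_r(\bar x)\subseteq \{u^\varepsilon(t)>0\}\subseteq A_{u^\varepsilon(t)}$ for small $\varepsilon$, and passing to the limit in the characteristic functions via \eqref{eq.sigma} forces $\sigma(t)=1$ a.e. on $B_r(\bar x)$, so $B_r(\bar x)\subseteq A(t)$ by \eqref{eq:Asigma}. This also gives $A_{u(t)}\subseteq A(t)$. Next I would verify condition \eqref{item:equiv4} of \Cref{lemma:equivalent}: for $v\in H^1_{\Gamma,w(t)}(\Omega)^+\cap C^0(\Omega)$ with $A(t)\subseteq\{v>0\}$, using $v$ itself as a competitor in \ref{def:GSeps} and passing to the limit — the Dirichlet term by weak lower semicontinuity, the penalty via $\chi_{A_{u^\varepsilon(t)}}\rightharpoonup^\ast \sigma(t)$ — yields
\[
\tfrac12\int_\Omega |\nabla u(t)|^2\,dx\le \tfrac12\int_\Omega |\nabla v|^2\,dx+\int_{\{v>0\}}\kappa(1-\sigma(t))\,dx\le \tfrac12\int_\Omega |\nabla v|^2\,dx+\int_{\{v>0\}\setminus A(t)}\kappa\,dx,
\]
using $\sigma(t)=1$ a.e. on $A(t)$. \Cref{lemma:equivalent} then delivers \eqref{eq:u=uAeps} and \ref{def:GS} (via $A_{u(t)}\subseteq A(t)$). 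Finally, strong $H^1$ convergence follows by taking $v=u(t)$ as a competitor in \ref{def:GSeps}, which gives $\limsup_\varepsilon \tfrac12\|\nabla u^\varepsilon(t)\|_{L^2}^2\le \tfrac12\|\nabla u(t)\|_{L^2}^2$ since $\{u(t)>0\}\subseteq A(t)$ makes the penalty term vanish in the limit.

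The only delicate point, compared to \Cref{teo:convergence_u}, is the treatment of $t=0$, since the approximating initial sets $A_0^\varepsilon$ strictly contain $A_0$ and are not themselves stable; this is where the hypothesis \eqref{eq:A0stable} on $A_0$ enters in an essential way. The rest of the argument is a direct transcription of the proof of \Cref{teo:convergence_u}, where the absence of the piecewise-constant boundary datum $\overline w^j$ actually simplifies the construction of competitors.
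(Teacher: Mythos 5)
Your treatment of $t\in(0,T]$ is essentially the paper's proof: uniform bounds from \ref{def:GSeps}, the inclusion $\{u(t)>0\}\subseteq A(t)$ via locally uniform convergence and \eqref{eq.sigma}, passage to the limit in the stability inequality using $\chi_{A_{u^\varepsilon(t)}}\xrightharpoonup{\ \ast\ }\sigma(t)$ together with $\sigma(t)\ge\chi_{A(t)}\ge\chi_{A_{u(t)}}$, identification through \Cref{lemma:equivalent}, and strong $H^1$ convergence by testing \ref{def:GSeps} with $v=u(t)$ (the paper works with arbitrary $v\in H^1_{\Gamma,w(t)}(\Omega)$ and condition \eqref{item:equiv3} rather than \eqref{item:equiv4}; this is immaterial). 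The deviations are at $t=0$. The paper does not need \eqref{eq:A0stable} to control the energies there: it uses only the inclusion $H^1_{\Gamma,w(0)}(\Omega,A_0)\subseteq H^1_{\Gamma,w(0)}(\Omega,A_0^\varepsilon)$ for the upper bound, and then identifies the limit by noting that $u^\varepsilon(0)$ vanishes outside $A_0^\varepsilon$, so any weak limit vanishes outside $A_0$ and, having Dirichlet energy not exceeding that of $\mathfrak{h}_{A_0,w(0)}$ by lower semicontinuity, coincides with it by the uniqueness in \Cref{lemma:minimiser_dirichlet}. Your alternative device, testing \eqref{eq:A0stable} with $B=A_0^\varepsilon$, does yield convergence of the Dirichlet norms, but norm convergence alone does not identify the limit: you still need the ``vanishing outside $A_0$'' step to see that the weak limit is admissible on $A_0$, so make it explicit. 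Finally, the statement asks for \ref{def:CO} and \ref{def:GS} also at $t=0$, which your limit passage (based on \ref{def:GSeps}, valid only for $t>0$) does not cover: there one argues directly from the stability \eqref{eq:A0stable} of $A_0$, taking $B=A_0\cup\{v>0\}$ as in \Cref{prop: almostEQ}, and then \Cref{lemma:equivalent} gives the continuity of $u(0)$ required by \ref{def:CO}.
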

\begin{proof}
If $t=0$, by \ref{def:IDeps} and recalling that $H^1_{\Gamma, w(0)}(\Omega, A_0)\subseteq H^1_{\Gamma, w(0)}(\Omega, A^\varepsilon_0)$ we deduce
\begin{equation}\label{eq:ineq0}
    \frac 12\int_\Omega |\nabla u^\varepsilon(0)|^2\, dx\le \frac 12\int_\Omega |\nabla \mathfrak{h}_{A_0,w(0)}|^2\, dx,
\end{equation}
whence by \Cref{lemma:minimiser_dirichlet} we infer the uniform bounds
\begin{equation}    \|u^\varepsilon(0)\|_{L^\infty(\Omega)}+\|u^\varepsilon(0)\|_{H^1(\Omega)}\le C.
\end{equation}
Up to a subsequence, we thus obtain that \eqref{eq:convergenceeps0} holds true for a function $u(0)\in H^1_{\Gamma,w(0)}(\Omega)^+\cap L^\infty(\Omega)$, but the convergence in $H^1(\Omega)$ is just weak at the moment. Since $u^\varepsilon(0)$ vanishes outside $A_0^\varepsilon$, one easily deduces that $u(0)\in H^1_{\Gamma,w(0)}(\Omega,A_0)$, and so by \eqref{eq:ineq0} we can characterize it as $u(0)=\mathfrak{h}_{A_0,w(0)}$. In particular, \ref{def:ID} and both \ref{def:CO} and \ref{def:GS} at time $t=0$ hold true, since $A_0$ is stable by assumption, and \eqref{eq:convergenceeps0} is true without passing to a subsequence. Moreover, exploiting \eqref{eq:ineq0}, we infer
\begin{equation}
     \limsup\limits_{\varepsilon\to 0}\frac 12\int_\Omega |\nabla u^\varepsilon(0)|^2\, dx\le \frac 12\int_\Omega |\nabla \mathfrak{h}_{A_0,w(0)}|^2\, dx,
\end{equation}
whence the convergence is actually strong in $H^1(\Omega)$.

We now focus on the positive times $t\in(0,T]$. Arguing as in \Cref{prop:boundDES}, by means of \ref{def:GSeps} we deduce:
    \begin{enumerate}[(i)]
    \item $\sup\limits_{t\in (0,T]}\|u^\varepsilon(t)\|_{L^\infty(\Omega)}\le M$;
    \item $\sup\limits_{t\in (0,T]}\|u^\varepsilon(t)\|_{H^1(\Omega)}\le C$;
    \item for all $\Omega'$ well contained in $\Omega$, there exists $C'>0$ such that $\sup\limits_{t\in (0,T]}\|u^\varepsilon(t)\|_{C^{0,1}(\Omega')}\le C'$.
\end{enumerate}
We fix $t\in (0,T]$ and, up to taking a further subsequence $\varepsilon_n$ possibly depending on $t$, we infer that \eqref{eq:convergenceeps} holds true for a function $u(t)\in H^1_{\Gamma,w(t)}(\Omega)^+\cap L^\infty(\Omega)\cap C^{0,1}_{\rm loc}(\Omega)$, except that the convergence in $H^1(\Omega)$ is just weak. In particular, \ref{def:CO} is satisfied. By arguing exactly as in the proof of \eqref{eq:claimcontained}, one can show that
\begin{equation}
    \{ u(t)>0\}\subseteq A(t),
\end{equation}
whence also \eqref{eq:AuinA} holds true.

In order to prove \ref{def:GS}, we fix $v\in H^1_{\Gamma,w(t)}(\Omega)$ and by passing to the limit in \ref{def:GSeps} we obtain
\begin{align}
    \frac 12 \int_\Omega |\nabla u(t)|^2\, dx&\le \liminf\limits_{n\to +\infty}\frac 12 \int_\Omega |\nabla u^{\varepsilon_n}(t)|^2\, dx\le \frac 12 \int_\Omega |\nabla v|^2\, dx+\lim\limits_{n\to {+}\infty}\int_\Omega \, \kappa \chi_{\{v>0\}}(1{-}\chi_{A_{u^\varepsilon_n(t)}}) dx\\
    &= \frac 12 \int_\Omega |\nabla v|^2\, dx+\int_\Omega \, \kappa \chi_{\{v>0\}}(1-\sigma(t)) dx.
\end{align}
Exploiting the inequalities $\sigma(t)\ge \chi_{A(t)}\ge \chi_{A_{u(t)}}$, by means of \Cref{lemma:equivalent} we finally deduce \eqref{eq:u=uAeps}, \ref{def:GS}, and the fact that the stated convergence of $u^\varepsilon(t)$ to $u(t)$ holds for the whole sequence.

We just need to prove that such convergence is also strong in $H^1(\Omega)$. To this aim, we exploit \ref{def:GSeps} which yields
\begin{align}
    \limsup\limits_{\varepsilon\to 0}\frac 12 \int_\Omega |\nabla u^{\varepsilon}(t)|^2&\le  \frac 12 \int_\Omega |\nabla u(t)|^2\, dx+\limsup\limits_{\varepsilon\to 0}\int_{\{u(t)>0\}\setminus A_{u^\varepsilon(t)}}\kappa \, dx\\
    &=\frac 12 \int_\Omega |\nabla u(t)|^2\, dx+\int_{\{u(t)>0\}}\kappa (1-\sigma(t))\, dx\\
    &\le \frac 12 \int_\Omega |\nabla u(t)|^2\, dx+\int_{\{u(t)>0\}\setminus A_{u(t)}}\kappa \, dx=\frac 12 \int_\Omega |\nabla u(t)|^2\, dx,
\end{align}
and we conclude.
\end{proof}

\subsection{Proof of \Cref{thm:conditional}}
We conclude the section by proving \Cref{thm:conditional}, which provides a DES of the debonding model. We start by showing that, under uniform global Lipschitz bounds, it is possible to pass to the limit in the energy inequality \ref{def:EIeps}, in particular in the work of the prescribed displacement.

\begin{prop} \label{p:cond1}
    Assume that $w\in AC([0,T];W^{2,p}(\Omega))$ for some $p>d$ and that it satisfies $\min_\Gamma w(t)>0$ for a.e. $t\in [0,T]$. If the following uniform bound holds true
    \begin{equation}\label{eq:Lipbounds}
        \operatorname{ess\, sup}\limits_{t\in [0,T]}\|u^\varepsilon(t)\|_{C^{0,1}(\Omega)}\le C,
    \end{equation}
    then the limit function $u$ obtained in \Cref{teo:convergence_ueps} satisfies the following properties:
    \begin{enumerate}[label = (\roman*)]
        \item $A_{u(t)}\in \mathcal O_{\Gamma,\dot w(t)}$ for a.e. $t\in [0,T]$;
        \item the map
            \begin{equation}
                \displaystyle t\mapsto \int_\Omega \nabla \mathfrak{h}_{A_{u(t)},\dot w(t)}\cdot \nabla u(t) \, dx
            \end{equation}
            belongs to $L^1(0,T)$;
        \item the inequality
            \begin{equation}\label{eq:eineqbart}
                \frac 12 \int_\Omega |\nabla u(t)|^2 dx+\int_{A_{u(t)}\setminus A_0}\kappa\,d x\le   \frac 12 \int_\Omega |\nabla u(0)|^2 dx+\int_0^t\int_\Omega \nabla \mathfrak{h}_{A_{u(\tau)},\dot w(\tau)} \cdot \nabla u(\tau) \, dx\, d\tau
            \end{equation}
        holds true for all $t\in [0,T]$.
    \end{enumerate}
\end{prop}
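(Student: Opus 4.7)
The plan is to pass to the limit $\varepsilon \to 0$ in the discrete energy inequality \ref{def:EIeps}. The Dirichlet energies at $t$ and at $0$ converge directly by the strong $H^1$-convergence of $u^\varepsilon \to u$ established in \Cref{teo:convergence_ueps}. The dissipation term can be rewritten as $\int_{A_{u^\varepsilon(t)} \setminus A_0^\varepsilon}\kappa\, dx = \int_\Omega \kappa\, \chi_{A_{u^\varepsilon(t)}}(1 - \chi_{A_0^\varepsilon})\, dx$; combining the weak-$\ast$ convergence $\chi_{A_{u^\varepsilon(t)}} \todebstar \sigma(t)$ from \eqref{eq.sigma} with the strong $L^1$-convergence $\chi_{A_0^\varepsilon} \to \chi_{A_0}$ (since $|\Gamma^\varepsilon| \to 0$), it converges to $\int_\Omega \kappa\, \sigma(t)(1 - \chi_{A_0})\, dx$, which dominates $\int_{A_{u(t)} \setminus A_0}\kappa\, dx$ because $\sigma(t) \geq \chi_{A(t)} \geq \chi_{A_{u(t)}}$. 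This handles every term in \ref{def:EIeps} except the work.

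The delicate step is the limit of the work $\int_0^t \int_\Omega \nabla \dot w^\varepsilon \cdot \nabla u^\varepsilon\, dx\, d\tau$, whose integrand $\nabla \dot w^\varepsilon = \Phi^\varepsilon \nabla \dot w + \dot w\, \nabla \Phi^\varepsilon$ contains a singular piece concentrating on $\Gamma^\varepsilon$. First I note that for a.e.\ $\tau$ the positivity $c(\tau) := \min_\Gamma w(\tau) > 0$, combined with the uniform Lipschitz bound \eqref{eq:Lipbounds}, forces $u^\varepsilon(\tau) \geq c(\tau)/2$ on the tubular neighborhood $\Gamma^{\delta(\tau)}$ with $\delta(\tau) := c(\tau)/(2C)$; hence $\Gamma^{\delta(\tau)} \subseteq \{u^\varepsilon(\tau) > 0\} \subseteq A_{u^\varepsilon(\tau)}$ for every $\varepsilon > 0$, and by the locally uniform convergence $u^\varepsilon(\tau) \to u(\tau)$ one also has $\Gamma^{\delta(\tau)} \subseteq \{u(\tau) > 0\} \subseteq A_{u(\tau)}$. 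Any cut-off $\psi_\tau \in C^1(\overline{\Omega})$ with $\psi_\tau = 1$ on $\Gamma$ and $\supp \psi_\tau \subseteq \Gamma^{\delta(\tau)}$ then produces $\psi_\tau \dot w(\tau) \in H^1_{\Gamma, \dot w(\tau)}(\Omega, A_{u(\tau)}) \cap H^1_{\Gamma, \dot w(\tau)}(\Omega, A_{u^\varepsilon(\tau)})$ for all sufficiently small $\varepsilon$, which in particular establishes (i).

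Next, since $\dot w^\varepsilon(\tau) = \dot w(\tau)$ on $\Gamma$ (as $\Phi^\varepsilon \equiv 1$ there) and $\dot w^\varepsilon(\tau) - \psi_\tau \dot w(\tau)$ is supported in $\Gamma^{\delta(\tau)} \subseteq A_{u^\varepsilon(\tau)}$ for $\varepsilon$ small, applying \eqref{eq:euler_lagrange_dirichlet} to $u^\varepsilon(\tau) = \mathfrak{h}_{A_{u^\varepsilon(\tau)}, w(\tau)}$ (Lemma \ref{lemma:equivalent}) yields $\int_\Omega \nabla u^\varepsilon(\tau) \cdot \nabla \dot w^\varepsilon(\tau)\, dx = \int_\Omega \nabla u^\varepsilon(\tau) \cdot \nabla(\psi_\tau \dot w(\tau))\, dx$. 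Strong $H^1$-convergence then passes this to $\int_\Omega \nabla u(\tau) \cdot \nabla(\psi_\tau \dot w(\tau))\, dx$, and the same Euler-Lagrange identity applied to $u(\tau) = \mathfrak{h}_{A_{u(\tau)}, w(\tau)}$ rewrites this as $\int_\Omega \nabla \mathfrak{h}_{A_{u(\tau)}, \dot w(\tau)} \cdot \nabla u(\tau)\, dx$, giving pointwise convergence in $\tau$. For the $\tau$-integral I would apply dominated convergence: \eqref{eq:Lipbounds} gives $|\int_\Omega \nabla u^\varepsilon \cdot \nabla \dot w^\varepsilon\, dx| \leq C \|\nabla \dot w^\varepsilon(\tau)\|_{L^1(\Omega)} \leq C'(\|\nabla \dot w(\tau)\|_{L^1(\Omega)} + \|\dot w(\tau)\|_{L^\infty(\Omega)})$, where the singular contribution is tamed by $\|\nabla \Phi^\varepsilon\|_{L^\infty} \cdot |\Gamma^\varepsilon| \leq C$ thanks to \eqref{eq:nablaphi} and $|\Gamma^\varepsilon| \leq C\varepsilon$. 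The Sobolev embedding $W^{2,p}(\Omega) \hookrightarrow W^{1,\infty}(\Omega)$ for $p > d$, together with $w \in AC([0,T]; W^{2,p}(\Omega))$, places the dominating function in $L^1(0,T)$, yielding both the $L^1$-regularity (ii) and the convergence of the work, hence (iii).

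The main obstacle is precisely the singular character of $\dot w^\varepsilon$ as $\varepsilon \to 0$: circumventing it via the regular $\tau$-dependent competitor $\psi_\tau \dot w(\tau)$ and Euler-Lagrange is what makes the proof work, and this step is strictly contingent on both the strict positivity $\min_\Gamma w(t) > 0$ and the uniform Lipschitz bound \eqref{eq:Lipbounds} in order to guarantee that $\supp \psi_\tau$ fits inside every $A_{u^\varepsilon(\tau)}$ and $A_{u(\tau)}$ simultaneously, which explains why the proposition must be stated conditionally.
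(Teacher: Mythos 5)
Your proposal is correct and follows essentially the same route as the paper: you exploit $\min_\Gamma w(t)>0$ together with the uniform Lipschitz bound to fit a fixed neighbourhood of $\Gamma$ inside every $A_{u^\varepsilon(\tau)}$ and inside $A_{u(\tau)}$, then use the Euler--Lagrange identity \eqref{eq:euler_lagrange_dirichlet} to trade the singular datum $\dot w^\varepsilon$ for an $\varepsilon$-independent competitor with boundary value $\dot w(\tau)$, and dominate the work integrand by $C\|\dot w(\tau)\|_{W^{2,p}(\Omega)}$ via \eqref{eq:nablaphi} and the Sobolev embedding, exactly as in \eqref{eq:L1bound}. The only (harmless) cosmetic differences are that you use the cut-off competitor $\psi_\tau\dot w(\tau)$ in place of the paper's harmonic replacement $\mathfrak{h}_{\{u(t)>\delta_t\},\dot w(t)}$, derive the inclusion $\Gamma^{\delta(\tau)}\subseteq A_{u^\varepsilon(\tau)}$ directly from the Lipschitz bound rather than from uniform convergence, and invoke dominated convergence instead of the Reverse Fatou Lemma.
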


\begin{proof}
    Passing to the limit \ref{def:EIeps} and recalling that $\sigma(t)\ge \chi_{A_u(t)}$, for all $t\in [0,T]$ we know that
    \begin{equation}\label{eq:ei}
        \frac 12 \int_\Omega |\nabla u(t)|^2 dx+\int_{A_{u(t)}\setminus A_{0}}\kappa\,d x \le  \frac 12 \int_\Omega |\nabla u(0)|^2 dx+\liminf\limits_{\varepsilon \to 0}\int_0^t \int_\Omega \nabla \dot w^\varepsilon(\tau)\cdot \nabla u^\varepsilon(\tau) \, dx\, d\tau.
    \end{equation}

    In order to deal with the above liminf, we first observe that \eqref{eq:Lipbounds} implies that for a.e. $t\in [0,T]$ the functions $u^\varepsilon (t)$ uniformly converge in the whole of $\Omega$ to $u(t)$, which in particular belongs to $C^{0,1}(\Omega)$. Then, for a.e. $t\in [0,T]$, by using \eqref{eq:nablaphi} we estimate
    \begin{equation}\label{eq:L1bound}
    \begin{aligned}
        \left|\int_\Omega \nabla \dot w^\varepsilon(t)\cdot \nabla u^\varepsilon(t) \, dx\right|&\le \int_{\Gamma^\varepsilon} |\nabla u^\varepsilon (t)|\left(|\Phi^\varepsilon||\nabla \dot w(t)|{+}|\nabla \Phi^\varepsilon||\dot w(t)|\right)\, dx\\
        &\le C\int_{\Gamma^\varepsilon}\left(|\nabla \dot w(t)|+\frac 1\varepsilon|\dot w(t)|\right) \, dx
        \\&\le C\left(\varepsilon \|\nabla \dot w(t)\|_{L^\infty(\Omega)}{+}\| \dot w(t)\|_{L^\infty(\Omega)}\right)\\
        &\le C  \| \dot w(t)\|_{W^{1,\infty}(\Omega)}\le C  \| \dot w(t)\|_{W^{2,p}(\Omega)},
    \end{aligned}         
    \end{equation}
    where in the last inequality we exploited the Sobolev Embedding Theorem. Since
    \begin{equation}
        t\mapsto\| \dot w(t)\|_{W^{2,p}(\Omega)}\in L^1(0,T),
    \end{equation}
    by means of the Reverse Fatou's Lemma we infer
    \begin{equation}\label{eq:revFatou}
        \limsup\limits_{\varepsilon \to 0}\int_0^t \int_\Omega \nabla \dot w^\varepsilon(\tau)\cdot \nabla u^\varepsilon(\tau) \, dx\, d\tau\le \int_0^t  \limsup\limits_{\varepsilon \to 0}\int_\Omega \nabla \dot w^\varepsilon(\tau)\cdot \nabla u^\varepsilon(\tau) \, dx \,d\tau\leq C.
    \end{equation}
    We now claim that for a.e. $t\in [0,T]$ there exists $\varepsilon_t>0$ such that
    \begin{equation}\label{eq:containepsilon}
        \{u(t)>\delta_t\}\subseteq A_{u^{\varepsilon}(t)}\quad\text{for all $\varepsilon\le \varepsilon_t$},
    \end{equation}
    where $\delta_t:= \frac12 \min\limits_\Gamma w(t)$. Indeed, by choosing $\varepsilon_t$ in such a way that $\|u^\varepsilon(t)-u(t)\|_{C^0(\Omega)}\le \delta_t/2$ for $\varepsilon\le \varepsilon_t$, one easily deduces that
    \begin{equation}
        u^\varepsilon(t,x)\ge u(t,x)-\frac{\delta_t}{2}\quad\text{for all $x\in \Omega$ and $\varepsilon\le \varepsilon_t$,}
    \end{equation}
    whence
    \begin{equation}
        \{u(t)>\delta_t\}\subseteq\left\{u(t)>\frac{\delta_t}{2}\right\}\subseteq\{u^\varepsilon(t)>0\}\subseteq A_{u^{\varepsilon}(t)}\quad\text{for all $\varepsilon\le \varepsilon_t$}.
    \end{equation}
    Since $u(t)=w(t)\ge 2\delta_t$ on $\Gamma$ and since $u(t)$ is continuous up to the boundary of $\Omega$, we infer that $\{u(t)>\delta_t\}$ contains an open neighborhood of $\Gamma$, and so in particular it belongs to $\mathcal O_{\Gamma,\dot w(t)}$. From \eqref{eq:containepsilon} we hence obtain that for a.e. $t\in (0,T)$ the function $\mathfrak{h}_{\{u(t)>\delta_t\},\dot w(t)}$ belongs to $H^1_{\Gamma,\dot w(t)}(\Omega, A_{u^{\varepsilon}(t)})$ for $\varepsilon\le \varepsilon_t$. Recalling that $u^\varepsilon(t)=\mathfrak{h}_{A_{u^\varepsilon(t)},w(t)}$, by exploiting \eqref{eq:euler_lagrange_dirichlet} we finally deduce
    \begin{equation}
        \lim\limits_{\varepsilon \to 0}\int_\Omega \nabla \dot w^\varepsilon(t)\cdot \nabla u^\varepsilon(t) \, dx =\lim\limits_{\varepsilon \to 0}\int_\Omega \nabla \mathfrak{h}_{\{u(t)>\delta_t\},\dot w(t)}\cdot \nabla u^\varepsilon(t) \, dx =\int_\Omega \nabla \mathfrak{h}_{\{u(t)>\delta_t\},\dot w(t)} \cdot \nabla u(t) \, dx. 
    \end{equation}
    Observing that $\{u(t)>\delta_t\}\subseteq \{u(t)>0\}\subseteq A_{u(t)}$, for a.e. $t\in [0,T]$ we have $A_{u(t)}\in \mathcal O_{\Gamma,\dot w(t)}$, and so we can continue the previous chain of identities obtaining
    \begin{equation}\label{eq:lastpassage}
        \lim\limits_{\varepsilon \to 0}\int_\Omega \nabla \dot w^\varepsilon(t)\cdot \nabla u^\varepsilon(t) \, dx =\int_\Omega \nabla \mathfrak{h}_{A_{u(t)},\dot w(t)} \cdot \nabla u(t) \, dx.
    \end{equation}
    Also notice that the last integral above is in $L^1(0,T)$ since it is the pointwise countable limit of terms bounded in $L^1(0,T)$, see \eqref{eq:L1bound}. Putting together \eqref{eq:ei}, \eqref{eq:revFatou}, and \eqref{eq:lastpassage} we finally conclude.
\end{proof}

We now conclude the proof of \Cref{thm:conditional} showing \ref{def:EB}.

\begin{prop}
    Under the assumptions of \Cref{p:cond1}, the limit function $u$ obtained in \Cref{teo:convergence_ueps} satisfies \ref{def:EB}. 
\end{prop}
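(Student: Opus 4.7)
The strategy is to establish the reverse of the energy inequality in \Cref{p:cond1}(iii), thereby upgrading it to \ref{def:EB}. A direct application of \Cref{prop:upperineq} on $[0,T]$ is obstructed because the ``safe'' function $\widetilde w$ required therein need not exist globally in time when $A_0$ fails to contain a neighbourhood of $\Gamma$. I instead plan to apply \Cref{prop:upperineq} on sub-intervals $[t_n,T]$ with $t_n\to 0^+$ and to pass to the limit.

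First, pick a sequence $t_n\to 0^+$ of ``good'' times where $\min_\Gamma w(t_n)>0$ and the Lipschitz bound \eqref{eq:conditionalbound} is in force. Arguing exactly as in \eqref{eq:containepsilon}, continuity of $u(t_n)$ up to the boundary yields a tubular neighbourhood $\Gamma^{\eta_n}\subseteq\{u(t_n)>\delta_{t_n}\}\subseteq A_{u(t_n)}$ for some $\eta_n>0$. Select a cut-off $\Phi_n\in C^1(\overline\Omega)$ with $\Phi_n\equiv 1$ on $\Gamma$ and $\operatorname{supp}(\Phi_n)\subseteq\Gamma^{\eta_n}$, and set $\widetilde w_n(t,x):=\Phi_n(x)w(t,x)$ for $t\in[t_n,T]$. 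By monotonicity of $t\mapsto A_{u(t)}$, we have $\widetilde w_n(t)\in H^1_{\Gamma,w(t)}(\Omega,A_{u(t)})^+$ for every $t\in[t_n,T]$, and $\widetilde w_n\in AC([t_n,T];H^1(\Omega))$. The proof of \Cref{prop:upperineq} then carries over verbatim to partitions starting at $t_n$, and combined with the Euler--Lagrange identity \eqref{eq:workeqtilde} it yields
\begin{equation}
\frac12\int_\Omega|\nabla u(t)|^2\,dx+\int_{A_{u(t)}\setminus A_{u(t_n)}}\kappa\,dx\geq \frac12\int_\Omega|\nabla u(t_n)|^2\,dx+\int_{t_n}^t\int_\Omega\nabla\mathfrak{h}_{A_{u(\tau)},\dot w(\tau)}\cdot\nabla u(\tau)\,dx\,d\tau.
\end{equation}
Adding $\int_{A_{u(t_n)}\setminus A_0}\kappa\,dx$ to both sides, taking $\liminf_{n\to\infty}$, and using both the $L^1$--integrability of the work term from \Cref{p:cond1}(ii) and absolute continuity of the integral, the desired reverse inequality reduces to the key claim
\begin{equation}
\lim_{n\to\infty}\Bigl[\frac12\int_\Omega|\nabla u(t_n)|^2\,dx+\int_{A_{u(t_n)}\setminus A_0}\kappa\,dx\Bigr]=\frac12\int_\Omega|\nabla u(0)|^2\,dx,
\end{equation}
whose $\limsup$ direction follows immediately from \Cref{p:cond1}(iii).

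The matching $\liminf$ bound is the main technical obstacle. My plan is to test \ref{def:GS} at $t=0$ against the competitor $v_n:=u(t_n)+(w(0)-w(t_n))\Psi_n$, where $\Psi_n\in C^1(\overline\Omega)$ is a cut-off with $\Psi_n\equiv 1$ on $\Gamma$ and $\operatorname{supp}(\Psi_n)\subseteq\Gamma^{\varepsilon_n}$ for a vanishing $\varepsilon_n>0$. Then $v_n\in H^1_{\Gamma,w(0)}(\Omega)$ and, outside $\operatorname{supp}(\Psi_n)$, $v_n=u(t_n)$, so $\{v_n>0\}\setminus A_0\subseteq(A_{u(t_n)}\setminus A_0)\cup\Gamma^{\varepsilon_n}$ and the dissipation side of \ref{def:GS} is bounded by $\int_{A_{u(t_n)}\setminus A_0}\kappa\,dx+\|\kappa\|_{L^\infty}|\Gamma^{\varepsilon_n}|$. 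Exploiting the improved regularity $w\in AC([0,T];W^{2,p}(\Omega))$, whose Sobolev embedding into $C^1(\overline\Omega)$ yields $\|w(t_n)-w(0)\|_{L^\infty}\to 0$, a choice such as $\varepsilon_n\sim\|w(t_n)-w(0)\|_{L^\infty}$ makes the cross and quadratic correction terms coming from $(w(0)-w(t_n))\nabla\Psi_n$ — of size $\|w(0)-w(t_n)\|_{L^\infty}\varepsilon_n^{-1/2}$ in $L^2$ — infinitesimal, eventually producing $\tfrac12\int_\Omega|\nabla u(0)|^2\,dx\leq\tfrac12\int_\Omega|\nabla u(t_n)|^2\,dx+\int_{A_{u(t_n)}\setminus A_0}\kappa\,dx+o(1)$. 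This delicate balancing of the shrinkage rate of $\operatorname{supp}(\Psi_n)$ against the $C^1$-modulus of $w$ — which is precisely where the $W^{2,p}$ regularity hypothesis enters — constitutes the heart of the argument.
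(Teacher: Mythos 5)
Your proposal is correct, and its first half coincides with the paper's argument: both apply \Cref{prop:upperineq} on intervals $[s,t]$ with $s>0$, after observing that the Lipschitz bound \eqref{eq:conditionalbound} and the positivity of $w$ on $\Gamma$ let $A_{u(s)}$ contain a neighbourhood of $\Gamma$, so that a localized datum $\Phi\, w$ plays the role of $\overline w$ in \eqref{eq:safeassumption} on $[s,T]$; then both let $s\to 0$ using the $L^1$-integrability of the work term. The difference lies in the key step, namely showing that $\liminf_{s\to 0}\bigl(\tfrac 12\int_\Omega|\nabla u(s)|^2dx+\int_{A_{u(s)}\setminus A_0}\kappa\,dx\bigr)\ge \tfrac 12\int_\Omega|\nabla u(0)|^2dx$ (only this direction is actually needed; the $\limsup$ half of your ``key claim'' is superfluous). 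The paper handles it softly: by \ref{def:GS} the family $u(s)$ is bounded in $H^1(\Omega)$, a subsequence $u(s_n)$ converges weakly to some $\overline u$ vanishing outside $\overline A:=\bigcap_n A_{u(s_n)}$, and testing the stability of $u(0)$ with $\overline u$ plus weak lower semicontinuity and $\overline A\subseteq A_{u(s_n)}$ gives the bound, with no rates and no extra regularity of $w$ used at this stage. You instead test stability of $u(0)$ directly with $v_n=u(t_n)+(w(0)-w(t_n))\Psi_n$, balancing the cut-off width $\varepsilon_n\sim\|w(0)-w(t_n)\|_{L^\infty(\Omega)}$ against the error $\|w(0)-w(t_n)\|_{L^\infty}\varepsilon_n^{-1/2}$; this works (the positivity set of $v_n$ is controlled as you say, $|\Gamma^{\varepsilon_n}|\lesssim\varepsilon_n$ for Lipschitz $\partial\Omega$, and $\|w(0)-w(t_n)\|_{L^\infty}\to 0$ via $W^{2,p}(\Omega)\hookrightarrow L^\infty(\Omega)$, $p>d$), but it consumes the $W^{2,p}$ hypothesis where the paper's argument does not need it, and it requires the explicit quantitative tuning that the compactness argument avoids. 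Conversely, your route is more elementary and self-contained, dispensing with the auxiliary weak limit $\overline u$ and the intersection set $\overline A$; just make sure to also record the (trivial) term $\Psi_n\nabla(w(0)-w(t_n))$, which tends to $0$ in $L^2(\Omega)$ since $w\in AC([0,T];H^1(\Omega))$.
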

\begin{proof}
    We just need to show that $u$ fulfils the opposite inequality of \eqref{eq:eineqbart}. First, recall that by \eqref{eq:Lipbounds} we know that $u(t)$ is continuous up to the boundary for a.e. $t\in\Omega$, hence by the positivity of $w$ we get that $A_{u(s)}$ contains a neighbourhood of $\Gamma$ for all $s\in(0,T]$. In particular, it is possible to construct a function $ w_{s}$ satisfying \eqref{eq:safeassumption} in $[s, T]$ with $A_{u(s)}$ in place of $A_0$.   
    Setting $$P(t):=\int_\Omega \nabla \mathfrak{h}_{A_{u(t)},\dot w(t)} \cdot \nabla u(t) \, dx,$$ by means of \Cref{prop:upperineq}, we know that 
    \begin{equation}
        \frac 12 \int_\Omega |\nabla u(t)|^2 dx+\int_{A_{u(t)}\setminus A_{u(s)}}\kappa\,d x\ge   \frac 12 \int_\Omega |\nabla u(s)|^2 dx+\int_s^t P(\tau)\, d\tau\qquad\text{for all }0<s\leq  t\le T.
    \end{equation}
    For all $t\in (0,T]$, recalling that $\kappa=0$ on $A_0$ we thus deduce 
    \begin{equation}\label{eq:limsup}
        \begin{aligned}
        \frac 12 \int_\Omega |\nabla u(t)|^2 dx+\int_{A_{u(t)}\setminus A_{0}}\kappa\,d x-\int_0^t P(\tau)\, d\tau
        \\\ge \limsup\limits_{s\to 0}\left(\frac 12 \int_\Omega |\nabla u(s)|^2 dx+\int_{A_{u(s)}}\kappa\,d x\right).
    \end{aligned}
    \end{equation}
    By exploiting \ref{def:GS}, we know that $u(s)$ is uniformly bounded in $H^1(\Omega)$, so there exists a subsequence $u(s_n)$ weakly converging to a function $\overline u\in H^1_{\Gamma,w(0)}(\Omega)^+$ as $n\to +\infty$. In particular, since $u(s_n)$ vanishes outside $A_{u(s_n)}$, we observe that
    \begin{equation}
        \overline u=0\quad\text{outside}\quad \overline A:=\bigcap_{n\in \n}A_{u(s_n)}.
    \end{equation}
    Since $u(0)$ is stable we now infer
    \begin{align}
        \frac 12 \int_\Omega |\nabla u(0)|^2 dx&\le \frac 12 \int_\Omega |\nabla \overline u|^2 dx+\int_{\{\overline u>0\}}\kappa\,d x\le \frac 12 \int_\Omega |\nabla \overline u|^2 dx+\int_{\overline A
        }\kappa\,d x\\
        &\le\liminf\limits_{n\to +\infty}\left(\frac 12 \int_\Omega |\nabla u(s_n)|^2 dx+\int_{A_{u(s_n)}}\kappa\,d x\right)\\
        &\le \limsup\limits_{s\to 0}\left(\frac 12 \int_\Omega |\nabla u(s)|^2 dx+\int_{A_{u(s)}}\kappa\,d x\right).
    \end{align}
    Combining the above inequality with \eqref{eq:limsup} we conclude.
\end{proof}

	\bigskip
	
	\noindent\textbf{Acknowledgements.} The authors are grateful to Gianni Dal Maso, Dario Mazzoleni and Bozhidar Velichkov for many fruitful discussions on the topic. The authors are members of GNAMPA (INdAM). F. R. and E. G. T. have been partially supported by the INdAM-GNAMPA project 2025 \lq\lq DISCOVERIES\rq\rq (CUP E5324001950001). E. M. and E. G. T. acknowledge support from PRIN 2022 (Project no. 2022J4FYNJ), funded by MUR, Italy, and the European Union -- Next Generation EU, Mission 4 Component 1 CUP F53D23002760006.

	\bigskip
    \printbibliography

@article{ALT1981,
    author = {Alt, H.W. and Caffarelli, L.A.},
    journal = {Journal für die reine und angewandte Mathematik},
    pages = {105-144},
    title = {Existence and regularity for a minimum problem with free boundary},
    volume = {325},
    year = {1981},
}

@article{BUCUR2008,
  title = {Quasistatic Evolution in Debonding Problems via Capacitary Methods},
  volume = {190},
  ISSN = {1432-0673},
  number = {2},
  journal = {Arch. Rational Mech. Anal.},
  publisher = {Springer Science and Business Media LLC},
  author = {Bucur,  Dorin and Buttazzo,  Giuseppe and Lux,  Anne},
  year = {2008},
  month = sep,
  pages = {281–306}
}

@article {LMRSMovingdomains,
    AUTHOR = {Lazzaroni, Giuliano and Molinarolo, Riccardo and Riva, Filippo
              and Solombrino, Francesco},
     TITLE = {On the wave equation on moving domains: regularity, energy
              balance and application to dynamic debonding},
   JOURNAL = {Interfaces Free Bound.},
  FJOURNAL = {Interfaces and Free Boundaries. Mathematical Analysis,
              Computation and Applications},
    VOLUME = {25},
      YEAR = {2023},
    NUMBER = {3},
     PAGES = {401--454},
      ISSN = {1463-9963,1463-9971},
   MRCLASS = {35R37 (35L05 35L85)},
  MRNUMBER = {4642019},
       DOI = {10.4171/ifb/485},
       URL = {https://doi.org/10.4171/ifb/485},
}

@article {RivNar,
    AUTHOR = {Riva, Filippo and Nardini, Lorenzo},
     TITLE = {Existence and uniqueness of dynamic evolutions for a
              one-dimensional debonding model with damping},
   JOURNAL = {J. Evol. Equ.},
  FJOURNAL = {Journal of Evolution Equations},
    VOLUME = {21},
      YEAR = {2021},
    NUMBER = {1},
     PAGES = {63--106},
      ISSN = {1424-3199,1424-3202},
   MRCLASS = {35L05 (35Q74 35R35 74H20 74H25 74H30)},
  MRNUMBER = {4238198},
       DOI = {10.1007/s00028-020-00571-4},
       URL = {https://doi.org/10.1007/s00028-020-00571-4},
}

@article {RivQuas,
    AUTHOR = {Riva, Filippo},
     TITLE = {On the approximation of quasistatic evolutions for the
              debonding of a thin film via vanishing inertia and viscosity},
   JOURNAL = {J. Nonlinear Sci.},
  FJOURNAL = {Journal of Nonlinear Science},
    VOLUME = {30},
      YEAR = {2020},
    NUMBER = {3},
     PAGES = {903--951},
      ISSN = {0938-8974,1432-1467},
   MRCLASS = {74K35 (35L20 35Q74 35R35 70F40)},
  MRNUMBER = {4096776},
       DOI = {10.1007/s00332-019-09595-8},
       URL = {https://doi.org/10.1007/s00332-019-09595-8},
}

@article {RivContDep,
    AUTHOR = {Riva, Filippo},
     TITLE = {A continuous dependence result for a dynamic debonding model
              in dimension one},
   JOURNAL = {Milan J. Math.},
  FJOURNAL = {Milan Journal of Mathematics},
    VOLUME = {87},
      YEAR = {2019},
    NUMBER = {2},
     PAGES = {315--350},
      ISSN = {1424-9286,1424-9294},
   MRCLASS = {35L20 (35B30 35Q74 35R35 70F40 74K35)},
  MRNUMBER = {4031787},
       DOI = {10.1007/s00032-019-00303-5},
       URL = {https://doi.org/10.1007/s00032-019-00303-5},
}

@article {BucButt,
    AUTHOR = {Bucur, D. and Buttazzo, G.},
     TITLE = {Irreversible quasistatic evolutions by minimizing movements},
   JOURNAL = {J. Convex Anal.},
  FJOURNAL = {Journal of Convex Analysis},
    VOLUME = {15},
      YEAR = {2008},
    NUMBER = {3},
     PAGES = {523--534},
      ISSN = {0944-6532,2363-6394},
   MRCLASS = {49J45 (58E30)},
  MRNUMBER = {2431409},
}

@article {BucButtStef,
    AUTHOR = {Bucur, Dorin and Buttazzo, Giuseppe and Stefanelli, Ulisse},
     TITLE = {Shape flows for spectral optimization problems},
   JOURNAL = {Interfaces Free Bound.},
  FJOURNAL = {Interfaces and Free Boundaries. Mathematical Analysis,
              Computation and Applications},
    VOLUME = {14},
      YEAR = {2012},
    NUMBER = {4},
     PAGES = {521--544},
      ISSN = {1463-9963,1463-9971},
   MRCLASS = {49Q10 (53C44 58E30)},
  MRNUMBER = {3016470},
MRREVIEWER = {Lorenzo\ Brasco},
       DOI = {10.4171/IFB/290},
       URL = {https://doi.org/10.4171/IFB/290},
}

@article {RossStefThom,
    AUTHOR = {Rossi, Riccarda and Stefanelli, Ulisse and Thomas, Marita},
     TITLE = {Rate-independent evolution of sets},
   JOURNAL = {Discrete Contin. Dyn. Syst. Ser. S},
  FJOURNAL = {Discrete and Continuous Dynamical Systems. Series S},
    VOLUME = {14},
      YEAR = {2021},
    NUMBER = {1},
     PAGES = {89--119},
      ISSN = {1937-1632,1937-1179},
   MRCLASS = {35Q74 (35A15 35R37 49Q10 74R10)},
  MRNUMBER = {4186205},
       DOI = {10.3934/dcdss.2020304},
       URL = {https://doi.org/10.3934/dcdss.2020304},
}

@book {MielRoubbook,
    AUTHOR = {Mielke, Alexander and Roub\'i\v cek, Tom\'a\v s},
     TITLE = {Rate-independent systems},
    SERIES = {Applied Mathematical Sciences},
    VOLUME = {193},
      NOTE = {Theory and application},
 PUBLISHER = {Springer, New York},
      YEAR = {2015},
     PAGES = {},
      ISBN = {},
   MRCLASS = {74-02 (35K86 47J35 49J40 49J45 49S05 74C05 74C15)},
  MRNUMBER = {3380972},
MRREVIEWER = {Giuseppe\ Saccomandi},
       DOI = {10.1007/978-1-4939-2706-7},
       URL = {https://doi.org/10.1007/978-1-4939-2706-7},
}

@article {DalMasoToader,
    AUTHOR = {Dal Maso, Gianni and Toader, Rodica},
     TITLE = {A model for the quasi-static growth of brittle fractures:
              existence and approximation results},
   JOURNAL = {Arch. Ration. Mech. Anal.},
  FJOURNAL = {Archive for Rational Mechanics and Analysis},
    VOLUME = {162},
      YEAR = {2002},
    NUMBER = {2},
     PAGES = {101--135},
      ISSN = {0003-9527,1432-0673},
   MRCLASS = {74R10 (35J20 49J10 74G65)},
  MRNUMBER = {1897378},
MRREVIEWER = {Jean-Jacques\ Marigo},
       DOI = {10.1007/s002050100187},
       URL = {https://doi.org/10.1007/s002050100187},
}

@article {RivScilSol,
    AUTHOR = {Riva, Filippo and Scilla, Giovanni and Solombrino, Francesco},
     TITLE = {Inertial Balanced Viscosity (IBV) solutions to infinite-dimensional rate-independent systems},
   JOURNAL = {J. Funct. Anal.},
  FJOURNAL = {},
    VOLUME = {288},
      YEAR = {2025},
    NUMBER = {7},
     PAGES = {110830},
      ISSN = {},
   MRCLASS = {},
  MRNUMBER = {},
MRREVIEWER = {},
       DOI = {https://doi.org/10.48550/arXiv.2306.12248},
       URL = {},
}

@article {CollinsFeld,
    AUTHOR = {Collins, Carson and Feldman, M. William},
     TITLE = {Minimizing movements solutions for a monotone model of droplet motion},
   JOURNAL = {Preprint arXiv},
  FJOURNAL = {},
    VOLUME = {},
      YEAR = {2024},
    NUMBER = {},
     PAGES = {},
      ISSN = {},
   MRCLASS = {},
  MRNUMBER = {},
MRREVIEWER = {},
       DOI = {https://doi.org/10.48550/arXiv.2408.15984},
       URL = {},
}

@article {FeldKimPoz,
    AUTHOR = {Feldman, M. William and Kim, C. Inwon and Po\v z\' ar, Norbert},
     TITLE = {On the geometry of rate independent droplet evolution},
   JOURNAL = {Preprint arXiv},
  FJOURNAL = {},
    VOLUME = {},
      YEAR = {2023},
    NUMBER = {},
     PAGES = {},
      ISSN = {},
   MRCLASS = {},
  MRNUMBER = {},
MRREVIEWER = {},
       DOI = {https://doi.org/10.48550/arXiv.2310.03656},
       URL = {},
}

@article {FeldKimPoz2,
    AUTHOR = {Feldman, M. William and Kim, C. Inwon and Po\v z\' ar, Norbert},
     TITLE = {An obstacle approach to rate independent droplet evolution},
   JOURNAL = {Preprint arXiv},
  FJOURNAL = {},
    VOLUME = {},
      YEAR = {2023},
    NUMBER = {},
     PAGES = {},
      ISSN = {},
   MRCLASS = {},
  MRNUMBER = {},
MRREVIEWER = {},
       DOI = {https://doi.org/10.48550/arXiv.2410.06931},
       URL = {},
}

@article {FrancMielk,
    AUTHOR = {Francfort, Gilles and Mielke, Alexander},
     TITLE = {Existence results for a class of rate-independent material
              models with nonconvex elastic energies},
   JOURNAL = {J. Reine Angew. Math.},
  FJOURNAL = {Journal f\"ur die Reine und Angewandte Mathematik. [Crelle's
              Journal]},
    VOLUME = {595},
      YEAR = {2006},
     PAGES = {55--91},
      ISSN = {0075-4102,1435-5345},
   MRCLASS = {74H20 (35Q72 74B20 74G65)},
  MRNUMBER = {2244798},
MRREVIEWER = {Marian\ Bocea},
       DOI = {10.1515/CRELLE.2006.044},
       URL = {https://doi.org/10.1515/CRELLE.2006.044},
}

@book {Velichkov,
    AUTHOR = {Velichkov, Bozhidar},
     TITLE = {Regularity of the one-phase free boundaries},
    SERIES = {Lecture Notes of the Unione Matematica Italiana},
    VOLUME = {28},
 PUBLISHER = {Springer, Cham},
      YEAR = {2023},
     PAGES = {},
      ISBN = {},
   MRCLASS = {35R35 (35-02 35B65)},
  MRNUMBER = {4807210},
       DOI = {10.1007/978-3-031-13238-4},
       URL = {https://doi.org/10.1007/978-3-031-13238-4},
}

@incollection {DeGiorgi,
    AUTHOR = {De Giorgi, Ennio},
     TITLE = {New problems on minimizing movements},
 BOOKTITLE = {Boundary value problems for partial differential equations and
              applications},
    SERIES = {RMA Res. Notes Appl. Math.},
    VOLUME = {29},
     PAGES = {81--98},
 PUBLISHER = {Masson, Paris},
      YEAR = {1993},
      ISBN = {2-225-84334-1},
   MRCLASS = {35K22 (49J45 58E12)},
  MRNUMBER = {1260440},
MRREVIEWER = {N.\ Pacchiarotti},
}

@article {SciannaTilli,
    AUTHOR = {Scianna, Giuseppe and Tilli, Paolo},
     TITLE = {A variational approach to the {H}ele-{S}haw flow with
              injection},
   JOURNAL = {Comm. Partial Differential Equations},
  FJOURNAL = {Communications in Partial Differential Equations},
    VOLUME = {30},
      YEAR = {2005},
    NUMBER = {7-9},
     PAGES = {1359--1378},
      ISSN = {0360-5302,1532-4133},
   MRCLASS = {35R35 (35A15 35D05 35J05 35J20 35Q35 76D27)},
  MRNUMBER = {2180308},
MRREVIEWER = {Ben\ W.\ Schweizer},
       DOI = {10.1080/03605300500258907},
       URL = {https://doi.org/10.1080/03605300500258907},
}

@article {ATW93,
    AUTHOR = {Almgren, Fred and Taylor, Jean E. and Wang, Lihe},
     TITLE = {Curvature-driven flows: a variational approach},
   JOURNAL = {SIAM J. Control Optim.},
  FJOURNAL = {SIAM Journal on Control and Optimization},
    VOLUME = {31},
      YEAR = {1993},
    NUMBER = {2},
     PAGES = {387--438},
      ISSN = {0363-0129},
   MRCLASS = {58E50 (49Q10 58E15 73B99)},
  MRNUMBER = {1205983},
       DOI = {10.1137/0331020},
       URL = {https://doi.org/10.1137/0331020},
}

@article {CardLey,
    AUTHOR = {Cardaliaguet, Pierre and Ley, Olivier},
     TITLE = {Some flows in shape optimization},
   JOURNAL = {Arch. Ration. Mech. Anal.},
  FJOURNAL = {Archive for Rational Mechanics and Analysis},
    VOLUME = {183},
      YEAR = {2007},
    NUMBER = {1},
     PAGES = {21--58},
      ISSN = {0003-9527,1432-0673},
   MRCLASS = {35R35 (35J05 49Q10 76D27)},
  MRNUMBER = {2259339},
MRREVIEWER = {Kohji\ Ohtsuka},
       DOI = {10.1007/s00205-006-0002-z},
       URL = {https://doi.org/10.1007/s00205-006-0002-z},
}

@article {ChambolleNovaga,
    AUTHOR = {Chambolle, Antonin and Novaga, Matteo},
     TITLE = {{$L^1$}-gradient flow of convex functionals},
   JOURNAL = {SIAM J. Math. Anal.},
  FJOURNAL = {SIAM Journal on Mathematical Analysis},
    VOLUME = {56},
      YEAR = {2024},
    NUMBER = {5},
     PAGES = {5747--5781},
      ISSN = {0036-1410,1095-7154},
   MRCLASS = {53E10 (28A33 49J10 49J45 58E12 74E10)},
  MRNUMBER = {4793465},
       DOI = {10.1137/22M1527556},
       URL = {https://doi.org/10.1137/22M1527556},
}

@article {DGChambolleMorini,
    AUTHOR = {Chambolle, Antonin and De Gennaro, Daniele and Morini,
              Massimiliano},
     TITLE = {Minimizing movements for anisotropic and inhomogeneous mean
              curvature flows},
   JOURNAL = {Adv. Calc. Var.},
  FJOURNAL = {Advances in Calculus of Variations},
    VOLUME = {17},
      YEAR = {2024},
    NUMBER = {4},
     PAGES = {1095--1129},
      ISSN = {1864-8258,1864-8266},
   MRCLASS = {53E10 (35K65 49Q20 58E12)},
  MRNUMBER = {4803721},
       DOI = {10.1515/acv-2022-0102},
       URL = {https://doi.org/10.1515/acv-2022-0102},
}

@article {BourdinFrancMar,
    AUTHOR = {Bourdin, Blaise and Francfort, Gilles A. and Marigo,
              Jean-Jacques},
     TITLE = {The variational approach to fracture},
   JOURNAL = {J. Elasticity},
  FJOURNAL = {Journal of Elasticity. The Physical and Mathematical Science
              of Solids},
    VOLUME = {91},
      YEAR = {2008},
    NUMBER = {1-3},
     PAGES = {5--148},
      ISSN = {0374-3535,1573-2681},
   MRCLASS = {49J40 (49J45 74G15 74R10)},
  MRNUMBER = {2390547},
MRREVIEWER = {Robin\ Olivian\ Simionescu-Panait},
       DOI = {10.1007/s10659-007-9107-3},
       URL = {https://doi.org/10.1007/s10659-007-9107-3},
}

@article {MielkeTheil,
    AUTHOR = {Mielke, Alexander and Theil, Florian},
     TITLE = {On rate-independent hysteresis models},
   JOURNAL = {NoDEA Nonlinear Differential Equations Appl.},
  FJOURNAL = {NoDEA. Nonlinear Differential Equations and Applications},
    VOLUME = {11},
      YEAR = {2004},
    NUMBER = {2},
     PAGES = {151--189},
      ISSN = {1021-9722,1420-9004},
   MRCLASS = {47J40 (34G25 35K85 49J40 74G65 74N30)},
  MRNUMBER = {2210284},
MRREVIEWER = {Vincenzo\ Recupero},
       DOI = {10.1007/s00030-003-1052-7},
       URL = {https://doi.org/10.1007/s00030-003-1052-7},
}

@article {BurridgeKeller,
    AUTHOR = {Burridge, R. and Keller, J. B.},
     TITLE = {Peeling, slipping and cracking---some one-dimensional
              free-boundary problems in mechanics},
   JOURNAL = {SIAM Rev.},
  FJOURNAL = {SIAM Review. A Publication of the Society for Industrial and
              Applied Mathematics},
    VOLUME = {20},
      YEAR = {1978},
    NUMBER = {1},
     PAGES = {31--61},
      ISSN = {0036-1445},
   MRCLASS = {73.34},
  MRNUMBER = {464828},
MRREVIEWER = {Vladimir\ Kerchman},
       DOI = {10.1137/1020003},
       URL = {https://doi.org/10.1137/1020003},
}

@article{Hellan,
	author = {Hellan, Kare},
	title = {Debond dynamics of an elastic strip, I: Timoshenko-beam properties and steady motion},
	year = {1978},
	journal = {International Journal of Fracture},
	volume = {14},
	number = {1},
	pages = {91 – 100},
	doi = {10.1007/BF00032387},
	type = {Article},
}

@article {Bucur,
    AUTHOR = {Bucur, Dorin},
     TITLE = {Minimization of the {$k$}-th eigenvalue of the {D}irichlet
              {L}aplacian},
   JOURNAL = {Arch. Ration. Mech. Anal.},
  FJOURNAL = {Archive for Rational Mechanics and Analysis},
    VOLUME = {206},
      YEAR = {2012},
    NUMBER = {3},
     PAGES = {1073--1083},
      ISSN = {0003-9527,1432-0673},
   MRCLASS = {49R05 (35P15)},
  MRNUMBER = {2989451},
MRREVIEWER = {Marino\ Belloni},
       DOI = {10.1007/s00205-012-0561-0},
       URL = {https://doi.org/10.1007/s00205-012-0561-0},
}

@article {BrianconLamboley,
    AUTHOR = {Briancon, Tanguy and Lamboley, Jimmy},
     TITLE = {Regularity of the optimal shape for the first eigenvalue of
              the {L}aplacian with volume and inclusion constraints},
   JOURNAL = {Ann. Inst. H. Poincar\'e{} C Anal. Non Lin\'eaire},
  FJOURNAL = {Annales de l'Institut Henri Poincar\'e{} C. Analyse Non
              Lin\'eaire},
    VOLUME = {26},
      YEAR = {2009},
    NUMBER = {4},
     PAGES = {1149--1163},
      ISSN = {0294-1449,1873-1430},
   MRCLASS = {49Q15 (35P15 35R35)},
  MRNUMBER = {2542718},
MRREVIEWER = {Luca\ Granieri},
       DOI = {10.1016/j.anihpc.2008.07.003},
       URL = {https://doi.org/10.1016/j.anihpc.2008.07.003},
}

@book {Henrot,
     TITLE = {Shape optimization and spectral theory},
    AUTHOR = {Henrot, Antoine},
 PUBLISHER = {De Gruyter Open, Warsaw},
      YEAR = {2017},
      ISBN = {},
   MRCLASS = {58J50 (49Q10 58-06)},
  MRNUMBER = {3681143},
}

@article {LMS,
    AUTHOR = {Lazzaroni, Giuliano and Molinarolo, Riccardo and Solombrino,
              Francesco},
     TITLE = {Radial solutions for a dynamic debonding model in dimension
              two},
   JOURNAL = {Nonlinear Anal.},
  FJOURNAL = {Nonlinear Analysis. Theory, Methods \& Applications. An
              International Multidisciplinary Journal},
    VOLUME = {219},
      YEAR = {2022},
     PAGES = {Paper No. 112822, 36},
      ISSN = {0362-546X,1873-5215},
   MRCLASS = {35L85 (35Q74 35R35 74H20 74K35)},
  MRNUMBER = {4385209},
       DOI = {10.1016/j.na.2022.112822},
       URL = {https://doi.org/10.1016/j.na.2022.112822},
}

@article {DMLN,
    AUTHOR = {Dal Maso, Gianni and Lazzaroni, Giuliano and Nardini, Lorenzo},
     TITLE = {Existence and uniqueness of dynamic evolutions for a peeling
              test in dimension one},
   JOURNAL = {J. Differential Equations},
  FJOURNAL = {Journal of Differential Equations},
    VOLUME = {261},
      YEAR = {2016},
    NUMBER = {9},
     PAGES = {4897--4923},
      ISSN = {0022-0396,1090-2732},
   MRCLASS = {35R35 (35L05 35Q74 74H20 74K35)},
  MRNUMBER = {3542962},
MRREVIEWER = {Roman\ M.\ Taranets},
       DOI = {10.1016/j.jde.2016.07.012},
       URL = {https://doi.org/10.1016/j.jde.2016.07.012},
}

@article {ButtazzoDM,
    AUTHOR = {Buttazzo, Giuseppe and Dal Maso, Gianni},
     TITLE = {An existence result for a class of shape optimization
              problems},
   JOURNAL = {Arch. Rational Mech. Anal.},
  FJOURNAL = {Archive for Rational Mechanics and Analysis},
    VOLUME = {122},
      YEAR = {1993},
    NUMBER = {2},
     PAGES = {183--195},
      ISSN = {0003-9527},
   MRCLASS = {49Q10 (49J45 73K40)},
  MRNUMBER = {1217590},
MRREVIEWER = {Erik\ J.\ Balder},
       DOI = {10.1007/BF00378167},
       URL = {https://doi.org/10.1007/BF00378167},
}

@article{fernandez2024continuity,
  title={Continuity up to the boundary for minimizers of the one-phase Bernoulli problem},
  author={Fern{\'a}ndez-Real, Xavier and Gruen, Florian},
  journal={Preprint arXiv},
  year={2024}
}

@ARTICLE{Ede,
    author = "Nick Edelen and Luca Spolaor and Bozhidar Velichkov",
     title = "A strong maximum principle for minimizers of the one-phase Bernoulli problem",
   journal = "Indiana Univ. Math. J.",
  fjournal = "Indiana University Mathematics Journal",
    volume = 73,
      year = 2024,
     number = 3,
     pages = "1061--1096",
      issn = "0022-2518",
     coden = "IUMJAB",
   mrclass = "35R35",
}

	\medskip
	\small
	\begin{flushleft}
		\noindent \verb"eleonora.maggiorelli01@universitadipavia.it"\\
		Dipartimento di Matematica  ``F. Casorati'', \\
		Universit\`a di Pavia,\\
		via Ferrata 5, 27100 Pavia, Italy\\
		\smallskip
		\noindent \verb"filippo.riva@unibocconi.it"\\
		Department of Decision Sciences, \\
		Bocconi University,\\
		via Roentgen 1, 20136 Milano, Italy\\
        \smallskip
        \noindent \verb"edoardogiovann.tolotti01@universitadipavia.it"\\
		Dipartimento di Matematica  ``F. Casorati'', \\
		Universit\`a di Pavia,\\
		via Ferrata 5, 27100 Pavia, Italy\\
	\end{flushleft}

\end{document}